\documentclass[10pt]{article}

\usepackage{amsmath,amsthm,amssymb}
\usepackage{mathtools}
\usepackage{hyperref}
\def\integers{{\mathbb{Z}}}
\def\reals{{\mathbb R}}
\def\naturals{{\mathbb N}}
\def\rationals{{\mathbb Q}}

\def\fancyA{\mathcal{A}}
\def\A{\mathcal{A}}

\def\D{\mathcal{D}}

\def\sups{\supseteq}
\def\subs{\subseteq}
\def\lcm{\mathrm{lcm}}

\usepackage{xcolor}
\usepackage{mathabx}

\usepackage{faktor}\usepackage{amsmath}\usepackage{amssymb}
\theoremstyle{definition}

\newtheorem{theorem}{Theorem}

\newtheorem{lemma}{Lemma}
\newtheorem{definition}{Definition}
\newtheorem{remark}{Remark}

\usepackage[shortlabels]{enumitem}
\usepackage{geometry}
\usepackage{float}
\usepackage{tikz}
\usetikzlibrary{arrows.meta}
\graphicspath{ {./Pictures/} }

\def\partialtheoremname{}
\newtheorem*{partialtheorem}{\partialtheoremname}

\usepackage[
backend=biber,
style=alphabetic,
sorting=nyt
]{biblatex}
\begin{document}
\begin{center} \Large \textbf{Commensurability and quasi-isometry classification for one vertex one loop tubular groups}
\normalsize

 \medskip Amy Tao
\end{center}

\begin{abstract}
A tubular group has a graph of groups decomposition with $\integers^2$ vertex groups and $\integers$ edge groups. This paper gives a classification of one vertex one loop tubular groups $G_{(k,\ell),(m,n)} = \langle a,b,t: [a,b]=1, t a^m b^n t^{-1} = a^k b^\ell \rangle$ up to commensurability and quasi-isometry. We show that tubular groups where the images of the edge maps have nonzero ``intersection number" are all commensurable and so quasi-isometric. When the intersection number is zero, there are two quasi-isometry classes and infinitely many commensurability classes. The nonzero and zero intersection number tubular groups are not quasi-isometric. 
\end{abstract}

\section{Introduction}

Two groups $H_1$ and $H_2$ are \textit{(abstractly) commensurable} if they contain isomorphic finite index subgroups. This is an equivalence relation on groups and especially for infinite groups, commensurability is a way of saying that two groups are ``alike" from an algebraic point of view. 

Infinite finitely generated groups also can be compared from a geometric point of view. Gromov introduced the program of studying the algebraic and geometric structure of finitely generated groups. The Cayley graph of a group with respect to a finite generating set has the word metric, under which it becomes a metric space. This metric depends on the choice of finite generating set, so we consider $G$ as a metric space up to \textit{quasi-isometric equivalence}, see Definition \ref{QI}. 

Aside from the same group being quasi-isometric to itself (with respect to different metrics), different groups can also be quasi-isometric. For example, if $H$ is a finite index subgroup of a finitely generated group $G$, then $H$ is quasi-isometric to $G$. From this, we get that if two finitely generated groups are abstractly commensurable, they are quasi-isometric. The converse is not true in general and commensurability is often stronger that quasi-isometry. In this paper, we find families of both kinds of phenomena: where commensurability occurs if and only if groups are quasi-isometric, and where it does not.

\begin{theorem}\label{thm} For  pairs of integers $(k,\ell),(m,n)\ne (0,0)$, let  $$G_{(k,\ell),(m,n)} = \langle a,b,t: [a,b]=1, t a^m b^n t^{-1} = a^k b^\ell \rangle.$$

\begin{enumerate}[label=\Alph*.]
    \item (Commensurability classification) The groups $G_{(k,\ell),(m,n)}$ where $kn - \ell m \ne 0$ are all commensurable to each other. 
For $kn - \ell m=0$, the group $G_{(k,\ell),(m,n)}$ is isomorphic to $G_{(\gcd(k,\ell),0),(\gcd(m,n),0)}$. Two groups of this form $G_{(m_1,0),(n_1,0)}, G_{(m_2,0),(n_2,0)}$ where $1\leq |m_i| \leq n_i$ are commensurable if and only if:
\begin{itemize}
    \item $m_1 = \pm m_2$, i.e.
    $G_{(m,0),(n,0)} \sim G_{(-m,0),(n,0)}$  
    \item $\frac{m_2}{m_1} = \frac{n_2}{n_1}$, i.e.
    $G_{(m,0),(n,0)} \sim G_{(cm,0), (cn,0)}$ for  $c\in \integers_{\ne 0}$.
\end{itemize} 

\item (Quasi-isometry classification) The groups $G_{(k,\ell),(m,n)}$ fall into the following three quasi-isometry classes:
\begin{itemize}
    \item $G_{(k,\ell),(m,n)}$ where $kn - \ell m \ne 0$
    \item $G_{(k,\ell),(\pm k, \pm \ell)}$. (These  are isomorphic to groups of the form $G_{(m,0),(m,0)}$) %This quasi-isometry class of tubular groups is quasi-isometric to $BS(m,m)$ for $m\geq 2$ 
    \item $G_{(k,\ell),(m,n)}$ where $kn - \ell m = 0$ but $(k,\ell)\ne (\pm m,\pm n)$. (These are isomorphic to groups of the form $G_{(m,0),(n,0)}$ for $m\ne \pm n$.) These groups are quasi-isometric to $BS(2,3)$. 
\end{itemize}

\end{enumerate}
\end{theorem}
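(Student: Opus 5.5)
We organize the proof around the two edge-group images $u=a^m b^n$ and $v=a^k b^\ell$ in the vertex group $\integers^2=\langle a,b\rangle$, and the intersection number $kn-\ell m=\det(u,v)$.

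\emph{The degenerate case $kn-\ell m=0$.} Here $u$ and $v$ lie on a common line in $\integers^2$. Choose a primitive vector $w$ spanning that line and extend it to a basis $\{w,w'\}$ of $\integers^2$. An automorphism of $\integers^2$ taking $w\mapsto a$ and $w'\mapsto b$ then rewrites the relation as $t a^{p} t^{-1}=a^{q}$, where $|p|=\gcd(m,n)$ and $|q|=\gcd(k,\ell)$. Replacing $t$ by $t^{-1}$ if needed, and inverting $a$, puts the group in the normal form $G_{(m,0),(n,0)}$ with $1\le |m|\le n$. Note that
$$G_{(m,0),(n,0)}\cong BS(n,m)\times\integers,$$
since $b$ is central.

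\emph{Commensurability in the degenerate case.} This reduces to commensurability of $BS(m,n)\times\integers$. For $|m|\neq n$, I would use the Bass--Serre tree together with the modular homomorphism. Inside a finite-index subgroup, the ratio $n/m$ survives as the index data of elliptic subgroups along edges, giving invariants up to passage to finite index. The ``if'' direction is direct:
\begin{itemize}
\item $BS(m,n)$ and $BS(-m,n)$ share an index-$2$ subgroup;
\item $BS(cm,cn)$ and $BS(m,n)$ are compared by building explicit finite covers of the graph of groups and matching them, in the style of Casals-Ruiz--Kazachkov--Zakharov.
\end{itemize}
The ``only if'' direction needs invariants that detect $\{|m|,n\}$ up to scaling. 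Candidates are the ratio $n/|m|$ and $\gcd$ data, read off from how edge groups sit in vertex groups of any finite-index subgroup. Equivalently, one can use a result of Levitt or of Casals-Ruiz et al.\ on commensurability of Baumslag--Solitar groups, extended to the product with $\integers$ by noting that the center (or a virtual center) is canonical. The $m=n$ case, which is virtually $F_2\times\integers^2$, is handled separately. I expect this step to be a major obstacle.

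\emph{The case $kn-\ell m\neq0$.} I would show that every such group is commensurable with a single model, say $G_{(1,0),(0,1)}$. The plan is to take the finite-index subgroup $\langle u,v\rangle\le\integers^2$, of index $|kn-\ell m|$, and build a finite cover of the one-vertex graph of groups whose vertex groups are copies of this lattice. The edges of the cover then attach along primitive vectors forming a basis. Concretely:
\begin{itemize}
\item pass to the finite-index subgroup where the vertex group is replaced by $L=\langle u,v\rangle$;
\item arrange the covering so each edge group is generated by a basis element of $L$;
\item identify $L\cong\integers^2$ with $u,v\mapsto a,b$.
\end{itemize}
This turns the cover into a tubular group with several vertices and loops. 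The model group admits a matching cover, so the two groups become isomorphic after passing to finite index. A cleaner route is to show both groups are virtually $F\times\integers$-like or realized as fundamental groups of the same nonpositively curved square complex up to finite cover; Wise's criterion for cube complexes applies in the nonzero-intersection case.

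\emph{Quasi-isometry.} Commensurability within the nonzero class gives one QI class. For the degenerate groups:
\begin{itemize}
\item $m=n$ gives a group virtually $F_2\times\integers^2$, so all such groups form one QI class;
\item $|m|\neq n$ gives $BS(m,n)\times\integers$; by Whyte, all non-unimodular $BS$ groups are QI to each other, and the QI class of the product with $\integers$ is then determined.
\end{itemize}
The statement's ``QI to $BS(2,3)$'' should be read accordingly (possibly via Whyte's result on graphs of $\integers^2$'s), using Cashen's QI classification of tubular groups, which distinguishes the three classes via the geometry of the edge lines in vertex planes.

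Separating the classes uses QI invariants:
\begin{itemize}
\item Nonzero intersection number means the group contains a quasi-isometrically embedded incompressible-like pattern of lines crossing in planes (Cashen's line patterns), which the degenerate groups lack.
\item The degenerate classes are separated by amenability-type or modular-homomorphism invariants: unimodular versus non-unimodular, via Whyte's theorem.
\end{itemize}
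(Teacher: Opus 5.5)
Your treatment of the degenerate case rests on a false isomorphism. In $G_{(m,0),(n,0)}=\langle a,b,t\mid [a,b]=1,\ ta^nt^{-1}=a^m\rangle$ the generator $b$ commutes with $a$ but not with $t$; this is exactly why the group is non-abelian. So the group is not $BS(n,m)\times\integers$. It is $\integers^2 *_{\langle a\rangle} BS(m,n)$, the GBS group with one vertex $\langle a\rangle$ and two loops labelled $(1,1)$ and $(m,n)$. Everything you build on the product structure inherits the error:
\begin{itemize}
\item Your ``if'' step $BS(cm,cn)\sim BS(m,n)$ is false for Baumslag--Solitar groups. By the CRKZ classification, $BS(2,3)$ and $BS(4,6)$ are incommensurable.
\item The true statement $G_{(m,0),(n,0)}\sim G_{(cm,0),(cn,0)}$ holds precisely because of the extra $(1,1)$ loop. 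The kernel of $b\mapsto 1\in\integers/|c|$ in $G_{(m,0),(n,0)}$ and the kernel of $a\mapsto 1\in\integers/|c(m-n)|$ in $G_{(cm,0),(cn,0)}$ (for $m\neq n$) are the same one-vertex graph of groups, with $|c|$ loops glued along $x^n\leftrightarrow x^m$.
\item The case $m=n$ is virtually $F_2\times\integers$, being commensurable with $G_{(1,0),(1,0)}=\integers\times F_2$. It is not virtually $F_2\times\integers^2$.
\item No reinterpretation of ``QI to $BS(2,3)$'' is needed. These groups are GBS groups, so Whyte's trichotomy applies directly: for $\gcd(m,n)=1$ and $|m|\neq|n|$ they are non-unimodular with $q=\langle m/n\rangle$, and they have rank $3$, so they are not $BS(1,N)$.
\end{itemize}
The ``only if'' direction, which you leave open, needs invariants of this GBS structure. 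The modular homomorphism forces $m_1/n_1$ and $m_2/n_2$ to be powers of a common fraction. Forester's depth profile $\{N^i\}$ of $BS(1,N)\vee BS(1,1)$ separates the groups $G_{(1,0),(n^j,0)}$. For $m,n\neq 1$, the absence of proper plateaus (Levitt) makes every finite-index subgroup an honest cover with labels in $\{1,m^j,n^j\}$. Since $q\cap\integers=1$, Forester's theorem reduces isomorphism of such covers to slide moves, and slides preserve that label set.

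In the nonzero case your idea is good, and cleaner than the paper's five-case analysis. Since $u,v\in L=\langle u,v\rangle$, the map $G\to\integers^2/L$ with $t\mapsto 0$ is well defined. Its kernel has one vertex group $L$ and $D=|kn-\ell m|$ loops, each gluing the basis vector $u$ to $v$; this is exactly the index-$D$ subgroup of $G_{(D,0),(0,1)}$. But this only shows that groups with the same $|kn-\ell m|$ are commensurable. The step ``the model group admits a matching cover'' is where the missing idea sits. For $G_{(1,0),(0,1)}$ one has $D=1$, so the construction returns the group itself. Moreover, any finite cover of the model with full $\integers^2$ vertex groups has one incoming and one outgoing edge per vertex, so it never produces $D$ loops at a vertex. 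You need a bridge between different determinants. The paper's bridge is that the kernel of $G_{(1,0),(0,1)}\to\integers/M$, $a,b\mapsto 1$, $t\mapsto 0$, is $G_{(1,0),(1,M)}$ in the basis $a^M, a^{-1}b$, and this group has determinant $M$.

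Finally, your separation of the nonzero class from the degenerate ones is unsupported. Cashen's QI results require the edge groups to rationally span each vertex group, which fails exactly when $kn-\ell m=0$, so they give no invariant distinguishing the two families. The paper instead shows two things. The tubular splitting of $G_{(1,0),(0,1)}$ is a JSJ decomposition with rigid $\integers^2$ vertex and an infinite tree of cylinders. The GBS splitting of $G_{(m,0),(n,0)}$ is a JSJ decomposition whose tree of cylinders is a single point. It then invokes the quasi-isometry invariance of JSJ trees of cylinders (Shepherd--Woodhouse).
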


There are often more geometric results from understanding groups in terms of quasi-isometries than algebraic results about commensurability. For example, Whyte showed that the \textit{Baumslag--Solitar groups}  $BS(m,n) = \langle a, t: ta^m t^{-1} = a^n \rangle$ with  $|m|,|n|\ne 1$ and $|m|\ne |n|$ are all quasi-isometric to each other, while those with $|m|=|n|\ne 1$ are in a separate class \cite{Whyte}. In contrast, for a long time there were only partial results about the commensurability of those Baumslag--Solitar groups. Whyte had shown that no two groups $BS(m, n)$ with $\gcd(m,n)=1$ and $m,n\ne 1$ are commensurable \cite{Whyte}, but it was  much later that Casals-Ruiz, Kazachkov and Zakharov gave a complete classification up to commensurability \cite{CRKZ}. They showed that for $BS(m,n)$ with $|m|,|n|\ne 1$, the only commensurability that occurs are swaps $BS(m,n)\sim BS(n,m)$, sign changes $BS(m,n) \sim BS(\pm m,n)$ and specific types of proportionality $BS(k,kn) \sim BS(\ell,\ell n)$ for $k,\ell,n\in \naturals$, $k,\ell >1$. Notice that for these non-solvable Baumslag--Solitar groups, the geometric and algebraic perspectives do not coincide. There are two quasi-isometry classes and infinitely many commensurability classes. 

The story is very different for the solvable Baumslag--Solitar groups. Farb and Mosher showed  that  $BS(1,n)$ and $BS(1,m)$ are quasi-isometric if and only if they are commensurable if and only if there exist $r, k , \ell$ such that $n = r^k$, $m=r^\ell$ \cite{FarbMosher}. For the  solvable Baumslag--Solitar groups, the geometric and algebraic perspectives  agree fully. 

A natural generalization of the $BS(m,n)$ are \textit{generalized Baumslag--Solitar (GBS) groups}, which are  fundamental groups of finite graphs of groups where all vertex and edge groups are $\integers$. Compared to $BS(m,n)$, the underlying graph no longer has to  be a single loop and can be any finite graph. On the geometric side, Whyte showed that all GBS groups are  quasi-isometric to one of $BS(1,n)$, $BS(n,n)$ or $BS(2,3)$ \cite{Whyte}. Meanwhile, the commensurability problem is again more difficult, and even the isomorphism problem is hard, because different graphs of $\integers$'s can describe isomorphic GBS groups, see \cite{CF08}. Clay and Forester studied commensurability for GBS groups using their actions on Bass-Serre trees \cite{CF09}. Levitt studied GBS groups without proper plateaus (a condition that generalizes $\gcd(m,n)=1$ for $BS(m,n)$) and gave some commensurability results there \cite{Lev}. In general, showing commensurability of two GBS groups often involves understanding their finite index subgroups and understanding various transformations that give isomorphic groups. Meanwhile, showing two GBS groups are incommensurable usually involves finding  commensurability invariants. Forester introduced the depth profile and used it to demonstrate incommensurable GBS groups in \cite{For24}. Casals-Ruiz, Kazachkov and Zakharov introduced the CRKZ invariant vector, which they used to distinguish the Baumslag--Solitar groups \cite{CRKZ}. Verma generalizes the CRKZ invariant vector to a larger class of GBS groups in \cite{Verma}. In many other cases, the commensurability problem for GBS groups is still open. The groups $G_{(k,\ell),(m,n)}$ with $kn-\ell m =0$ are actually GBS groups in disguise. 

Another direction of study is to increase the dimension of the  groups appearing in a graph of groups. For example, the groups $G_{(k,\ell),(m,n)}$ generalize  $BS(m,n)$ in having a one vertex and one loop underlying graph, but the vertex group is now $\integers^2$. More generally, \textit{tubular groups} are fundamental groups of graphs of groups with $\integers^2$ vertex groups and $\integers$ edge groups. Topologically, they correspond to spaces built by gluing annuli to tori. Brady and Bridson gave examples of these groups when studying the isoperimetric spectrum \cite{BradyBridson}. Right-angled Artin groups with trees as their defining graphs are tubular groups. Tubular groups are a class of groups with surprisingly diverse behavior (see \cite{Resfinite} for more of their appearances in the literature). More recently, people have studied when tubular groups  are virtually special \cite{WoodhouseSpecial}, residually finite \cite{Resfinite}, or when they act freely on a $CAT(0)$ cube complex \cite{Cat0}. There has been some progress in understanding  quasi-isometry and commensurability for tubular groups. 
Cashen studied tubular groups that satisfy the ``crossing graph condition", where at every  $\integers^2$ vertex group, the incident edge groups must rationally span the vertex group. Cashen gave an algorithm that decides in finite time  whether or not two such tubular groups are quasi-isometric \cite{Cashen}. Cashen also introduced the maximum slope invariant as a quasi-isometry invariant for tubular groups where at every vertex, the incident edge groups have images contained in three distinct cyclic subgroups \cite{CashenMaxSlope}. Casals-Ruiz, Kazachkov and Zakharov characterized the commensurability classes of tubular groups which are right-angled Artin groups defined by trees of diameter 4. As part of this, they gave the first examples of right-angled Artin groups  that are quasi-isometric but not commensurable \cite{CRKZRAAGs}. Theorem \ref{thm} gives the quasi-isometry and commensurability classification for a class of tubular groups that are not right-angled Artin groups. 
%Say something about how the ones here aren't RAAGs? 

\subsection{Outline}

In Section 2, we give background on  graphs of groups and their subgroups as well as generalized Baumslag--Solitar groups. We also include background on JSJ decompositions, which are ``maximal" graph of groups decompositions over two-ended edge groups. We then focus on one vertex one loop tubular groups.

By results in \cite{Cashen}, we already know that
the $G_{(k,\ell),(m,n)}$ with nonzero ``intersection number" $kn - \ell m$ are quasi-isometric to each other. Here, we construct finite index (normal) subgroups to demonstrate that this subfamily of one vertex one loop tubular groups  are all commensurable to $G_{(1,0),(0,1)}$ and so to each other. 

The groups $G_{(k,\ell),(m,n)}$ with $kn -\ell m = 0$ are not covered by results in \cite{Cashen}.  We show that these groups are secretly GBS groups with graph of $\integers$'s structure having one vertex and two loops. We  use tools from GBS theory like the modular homomorphism, depth profile and slide moves to distinguish infinitely many commensurability classes. Finally, we show that tubular groups with nonzero and zero intersection number are not quasi-isometric to each other. We do this by arguing that the one vertex one loop tubular group decomposition of $G_{(1,0),(0,1)}$ and the GBS decomposition of $G_{(k,\ell),(m,n)}$ with $kn -\ell m = 0$ are  JSJ decompositions. We then show that the associated trees of cylinders are not isomorphic, so that the groups themselves cannot be quasi-isometric. 

%Say something about comparisons to GBS and to BS?

\subsection*{Acknowledgments}
The author is deeply grateful to her advisor Tullia Dymarz for many helpful discussions. The author was supported by a Bung-Fung Lee Torng fellowship during the summer of 2026.

\section{Preliminaries}
\subsection{Graphs of groups}

We use the notation for graphs of groups from \cite{ScottWall}. A \textit{graph} $A$ consists of two sets $V(A)$ (the vertices of $A$) and $E(A)$ (the edges of $A$) along with an involution on $E(A)$ sending $e\in E(A)$ to $\overline{e} \in E(A)$ (with $e\ne \overline{e}$) and two maps $\partial_0, \partial_1:E(A) \to V(A)$ satisfying $\partial_1(e) = \partial_0(\overline{e})$. For an edge $e$, its initial vertex is $\partial_0(e)$, its terminal vertex is $\partial_1(e)$ and $\overline{e}$ is the reverse edge. 

A \textit{directed graph} is a graph together with a partition $E(A) = E^+(A) \sqcup E^-(A)$ that separates every pair $\{e,\overline{e}\}$. The edges in $E^+(A)$ are called \textit{directed edges}. They are the directions of edges we draw in pictures of graphs. 

A \textit{graph of groups} is a finite graph $A$ with the following data:
\begin{enumerate}
    \item Every vertex $v\in V(A)$ has an associated \textit{vertex group} $G_v$
    \item Every edge $e\in E(A)$ has an associated edge group $G_e$ such that $G_{\overline{e}}=G_e$
    \item For every $e\in E(A)$, we have an injective homomorphism $\phi_e: G_e \to G_{\partial_0(e)}$ called an \textit{edge map}.
\end{enumerate}

We construct a corresponding \textit{graph of spaces} $X$  by realizing each piece of the graph of groups topologically and then gluing them together. Namely
\begin{enumerate}
    \item For every $v\in V(A)$, we have a \textit{vertex space} $X_v$ with $\pi_1(X_v)\simeq G_v$
    \item For every $e\in E(A)$, we have a \textit{edge space} $X_e$ with $\pi_1(X_e)\simeq G_e$ and $X_{\overline{e}}=X_e$
    \item For every $e\in E(A)$, we have a map of spaces $X_e \to X_{\partial_0(e)}$ that induces the  homomorphism $G_e \to G_{\partial_0(e)}$ on the level of fundamental groups
\end{enumerate}
and then $X$ is the quotient space $$X = \left(\bigsqcup_{v\in V(A)} X_v \sqcup \bigsqcup_{e\in E(A)} X_e x [0,1] \right)/\sim$$ where the relation $\sim$ identifies $X_e \times \{0\}$ with $X_{\overline{e}} \times \{1\}$ and identifies $(x,1)\in X_e \times [0,1]$ with $\phi_e(x)$. 

The \textit{fundamental group of a graph of groups} $A$ is the fundamental group of the corresponding \textit{graph of spaces} $X$. 

\subsection{Generalized Baumslag--Solitar groups}
\textit{Generalized Baumslag--Solitar groups} (GBS groups) are fundamental groups of  graphs of groups with $\integers$ vertex and edge groups. Every edge map $\phi_e$ is an injective map $\integers\to \integers$, so it is multiplication by some nonzero integer $\lambda(e)$. 
A \textit{labelled graph} $(A,\lambda)$ is a finite graph with a label function $\lambda:E(A) \to \integers \setminus \{0\}$ describing the edge maps.  Labelled graphs are concise way of specifying GBS groups. 

We say that $(A,\lambda)$ is \textit{reduced} if every edge $e$ with $\lambda(e)=\pm 1$ is a loop. 

 \begin{definition}
     A GBS group is \textit{elementary} if it is isomorphic to $\integers$, $\integers^2=BS(1,1)$ or the Klein bottle group $K=BS(1,-1)$. Otherwise, the GBS group is called \textit{non-elementary}. 
 \end{definition}

For a GBS group given by labelled graph $(A,\lambda)$, we say an element $g\in G$ is \textit{elliptic} if it fixes a point in the Bass-Serre tree associated to $A$ (this is also called a \textit{GBS tree}). Otherwise, an element of $G$ is called \textit{hyperbolic}. If $G$ is non-elementary GBS, any two GBS trees for the same group $G$ define the same partition of $G$ into elliptic and hyperbolic elements, so we can call elements  ``elliptic" without referring to a particular GBS tree (or labelled graph). This is important in the definition of the \textit{modular homomorphism}. 

\begin{definition}
The \textit{modular homomorphism} of a non-elementary GBS group $G$ is a map $q:G \to \rationals^*$. Fix a non-trivial elliptic element $a\in G$.  Since elliptic elements in a GBS group are commensurable, every element $g\in G$ satisfies a relation $g a^k g^{-1} = a^\ell$. The map $q$ is  defined by $q(g)=\frac{k}{\ell}$. (The numbers $k$ and $\ell$ may depend on the choice of $a$, but $\frac{k}{\ell}$ will not.)

(The fact that elliptic elements in a GBS group are commensurable comes from how in a GBS tree, every edge stabilizer has finite index in its neighboring vertex stabilizers and so all edge and vertex stabilizers are in the same commensurability class.) 
\end{definition}

Notice that the modular homomorphism outputs $1$ on every elliptic element (it doesn't matter which elliptic $a\in G$ we use, so use the elliptic element itself). As such, the modular homomorphism factors through $\pi_1(A)$, the fundamental group of the underlying graph $A$. This gives us another way to compute the image of a group under the modular homomorphism, just by looking at loops in the graph $A$. 

\begin{definition}\label{modularhom} If $\gamma \in \pi_1(A)$ (the fundamental group of the underlying graph $A$) is represented by an edge-loop $(e_1,\dots,e_k)$, then $q(\gamma)=\prod_{i=1}^k \frac{\lambda(e_i)}{\lambda(\overline{e_i})}$.
    
\end{definition}

\begin{remark}(Remark 2.2 in \cite{CRKZ})
    If $H$ is a finite index subgroup of a GBS group $G$, then the modular homomorphism for $H$ is just the restriction of that for $G$. This way, $q(H)$ is a finite index subgroup of $q(G)$. 
\end{remark}

\begin{definition}
    Any two GBS trees for a non-elementary GBS group $G$ are related
by an \textit{elementary deformation}. That is, they are related by a finite sequence of elementary
moves, called elementary collapses and expansions. There are also slide moves. (See \cite{For24} for more details on elementary moves.)

\begin{center}
\includegraphics[height=1.5cm]{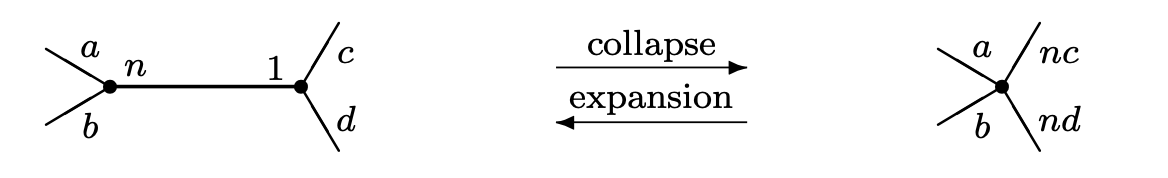}
\newline \includegraphics[height=2.5cm]{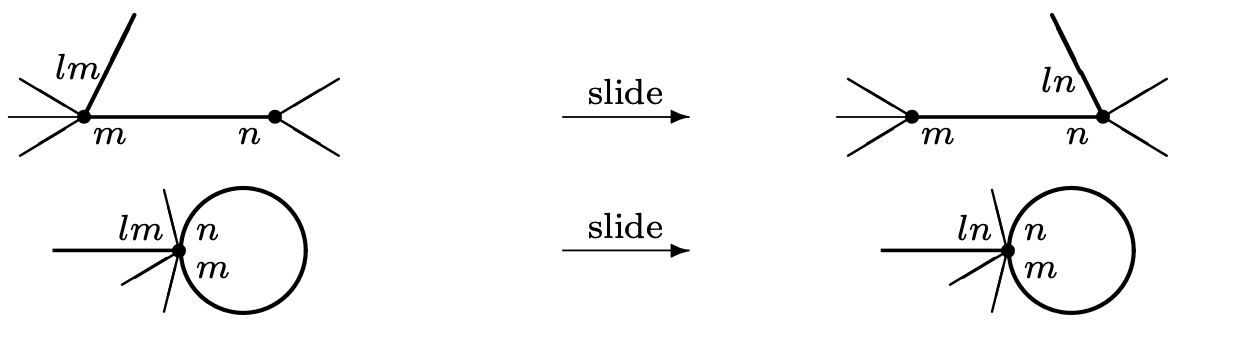}
\end{center}

\end{definition}

\begin{theorem}\label{Prop3.5For}(Theorem 7.4 in \cite{For06})
    Suppose $(A_1,\lambda_1)$ and $(A_2,\lambda_2)$ are compact reduced labelled graphs representing the same GBS group $G$. If $q(G)\cap \integers=1$, then $A_1$ and $A_2$ are related by slide moves. 
\end{theorem}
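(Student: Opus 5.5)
The approach is to work in the deformation space of $G$ and show that the integrality hypothesis on $q(G)$ rules out every elementary move except slides. Let $T_1,T_2$ be the Bass--Serre trees of $(A_1,\lambda_1)$ and $(A_2,\lambda_2)$. Since $G$ is non-elementary, both trees have the same elliptic subgroups, so they lie in the same deformation space. By the deformation theorem quoted above (\cite{For24}), $T_1$ and $T_2$ are therefore related by a finite sequence of elementary collapses and expansions. Reducedness means $T_1$ and $T_2$ admit no further elementary collapses. The task is to replace an arbitrary collapse/expansion path by a path that stays inside the set of reduced trees and uses only slides.

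First I would record what the hypothesis $q(G)\cap\integers = 1$ says combinatorially; I read it as: no integer other than $\pm1$ lies in $q(G)$. Suppose some labelled graph for $G$ has a loop $e$ with $\lambda(e)=\pm1$ and $\lambda(\overline{e})=n$. Then Definition \ref{modularhom} gives $q(e)=\pm n^{\pm1}$, and since $q(G)$ is a group it contains $n$. So the hypothesis forces $|n|=1$. In the language of Forester and Guirardel--Levitt, this says the deformation space contains no strictly ascending loops: no edge stabilizer is properly conjugated into itself, i.e.\ there is no $g$ with $gG_eg^{-1}\subsetneq G_e$. Because $q(G)$ does not depend on the choice of tree, this property holds for every labelled graph in the deformation space, not just for $A_1$ and $A_2$.

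Next I would invoke the structure theory of non-ascending deformation spaces. For reduced trees, a collapse of an edge $e$ is possible only when $e$ is not a loop and $\lambda(e)=\pm1$. A composite of an expansion followed by a collapse that does not reduce to slides must be one of two kinds. It can be an \emph{induction move}, which rescales labels around a loop with labels $1$ and $n$. Or it can be an $\mathcal{A}^{\pm}$-move in the sense of Clay--Forester, which again requires a loop labelled $(1,n)$ with $|n|>1$. Both kinds therefore need a strictly ascending loop, and the previous paragraph excludes these. The step I would carry out is a diamond/normal-form argument. Take a shortest path of collapses and expansions from $T_1$ to $T_2$, and push all expansions to the front and all collapses to the back by commuting moves. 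Then argue that each pair consisting of an expansion of an edge $f$ and a collapse of an edge $e\neq f$ returns to a reduced tree that differs from the starting one by a slide of an edge end along $f$. Concatenating these pairs gives the required sequence of slide moves.

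The main obstacle is exactly this local analysis: showing that an expansion followed by a collapse between reduced trees is either trivial or a slide, unless a strictly ascending loop appears. I would handle it by cases on whether the expanded edge $f$ and the collapsed edge $e$ share a vertex in the quotient graph, and on whether $e$ becomes a loop after expansion. In each case I would compare labels at the affected vertex. The only configurations not realizable by slides force a loop with labels $\pm1$ and $n$ where $|n|>1$, which the hypothesis excludes.
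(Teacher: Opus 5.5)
Your first steps are sound, and they are exactly where the hypothesis enters. Because $q$ is intrinsic to the non-elementary group $G$, no labelled graph in the deformation space can contain a loop labelled $(\pm1,n)$ with $|n|>1$. (The paper gives no proof of this statement; it simply cites Forester.)

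The gap is the passage from an arbitrary elementary deformation to slides. Rewriting a collapse followed by an expansion as an expansion followed by a collapse requires a common refinement of the two trees involved. Two one-edge expansions at the same vertex orbit whose sets of transferred edge-ends cross have no common refinement. If your reordering always worked, you would obtain a single tree $S$ collapsing onto both $T_1$ and $T_2$, so any two reduced trees would be compatible. That is false even under the theorem's hypothesis. For $G=F_2\times\integers$ (one vertex, two loops labelled $(1,1)$, $q(G)=\{1\}$), the reduced trees are the marked roses in the spine of Culler--Vogtmann Outer space, and roses far apart in the spine have no common refinement. Even granting such a normal form, pairing each expansion with a collapse is unjustified. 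A collapse available in the fully expanded tree need not be available, or lead to a reduced tree, after just one expansion.

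What is missing is a global input: that $T_1$ and $T_2$ can be joined by a chain of reduced trees in which consecutive trees differ by a single expansion followed by a single collapse. As far as I recall, this connectivity of reduced trees is where the real work lies in the known proofs for non-ascending deformation spaces (Forester, Guirardel--Levitt, Clay--Forester). Once such a chain exists, your local analysis does finish the proof. With no strict ascending loop, consider an expansion at $v$ followed by collapsing an edge $e$ whose transferred end now has label $\pm1$. This is just the sequence of slides of the other transferred ends over $e$, which may be a loop of the original graph. The remaining collapsible configurations either involve a loop labelled $(\pm1,n)$ with $|n|>1$ or are again slides.

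Alternatively, you could quote as a black box the Clay--Forester theorem that reduced labelled graphs for the same GBS group are connected by slides, inductions and $\mathcal{A}^{\pm1}$-moves. Every induction or $\mathcal{A}^{\pm1}$-move needs a strict ascending loop, which your modular observation excludes. But your proposal uses only a local version of that result and tries to supply the global connectivity through the reordering, which does not work.
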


Every finite index subgroup of a GBS group is a GBS group. The inclusion of GBS groups $H \hookrightarrow G$ is induced by a covering map between their fibered 2-complexes. In \cite{For24}, Forester characterizes these covering spaces in terms of   \textit{admissible branched coverings} of their labelled graphs. 

\begin{definition}
    An \textit{admissible branched covering} of labelled graphs  $(A,\lambda) \to (B,\mu)$ is a surjective graph morphism $p: A \to B$ between connected graphs together with a degree function $$d:E(A) \sqcup V(A) \to \naturals$$ satisfying $d(e)=d(\overline{e})$ for all $e\in E(A)$ such that the following holds. Given an edge $e\in E(B)$ with $\partial_0(e)=v$ and a vertex $u\in p^{-1}(v)$, let $k_{u,e} = \gcd(d(u),\mu(e))$. Then:
    \begin{enumerate}
        \item $|p^{-1}(e) \cap E_0(u)| = k_{u,e}$
        \item $\lambda(e') = \mu(e)/k_{u,e}$ for each edge $e'\in p^{-1}(e) \cap E_0(u)$
        \item $d(e') = d(u)/k_{u,e}$ for each edge $e'\in p^{-1}(e) \cap E_0(u)$.
    \end{enumerate}
\end{definition}

\begin{theorem}[Proposition 3.19 in \cite{For24}] Let $G$ be a GBS group with labeled graph $(B,\mu)$. There is a one-to-one
correspondence between conjugacy classes of GBS subgroups of $G$ (excluding hyperbolic
cyclic subgroups) and admissible branched coverings $(A,\lambda) \to (B,\mu)$.
\end{theorem}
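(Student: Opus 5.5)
The statement just above is Forester's Proposition 3.19, cited from \cite{For24}. I will not reprove it in full; I would follow the covering-space argument it rests on.

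\textbf{Step 1 (setup).} Realize $G=\pi_1(B,\mu)$ as the fundamental group of its graph of spaces $X_B$. Each vertex space is a circle. Each edge space is a cylinder, attached at its two ends by degree $\mu(e)$ and $\mu(\overline e)$ covering maps.

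\textbf{Step 2 (subgroup to covering).} Let $H\le G$ be a subgroup that is not hyperbolic cyclic, and take the covering $\widehat X\to X_B$ corresponding to $H$, which is determined up to conjugacy. The preimage of each vertex circle is a disjoint union of circles or lines, and similarly for the cylinders. Call this the induced graph-of-spaces structure on $\widehat X$.

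\textbf{Step 3 (local analysis).} Fix a component circle $u$ lying over $v$, mapping with degree $d(u)$. The preimages of an edge cylinder $e$ at $v$ that attach to $u$ correspond to double cosets $\langle c^{d(u)}\rangle\backslash\langle c\rangle/\langle c^{\mu(e)}\rangle$, where $c$ generates the vertex group. There are exactly $\gcd(d(u),\mu(e))=k_{u,e}$ of them. Each lifted attaching map has degree $\mu(e)/k_{u,e}$, and each lifted cylinder covers with degree $d(u)/k_{u,e}$, since $\lcm(d(u),\mu(e))/\mu(e)=d(u)/k_{u,e}$. These are precisely conditions (1)--(3) of an admissible branched covering. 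Now collapse $\widehat X$ to its core, the minimal invariant subcomplex. The hypothesis excluding hyperbolic cyclic subgroups, together with the GBS structure of $H$, is what makes $H$ act on its minimal subtree with compact quotient and with circle (not line) vertex spaces. This yields a finite labelled graph $(A,\lambda)$ mapping to $(B,\mu)$. Surjectivity of the graph morphism needs an argument: one extends to the full finite-sheeted-on-vertices picture, or notes that Forester allows non-finite index via branched data.

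\textbf{Step 4 (covering to subgroup).} Conversely, given admissible data, build the graph of spaces over $A$ with circles of length $d(u)$ and cylinders of degree $d(e)$. The equations (1)--(3) say exactly that the local gluings match $X_B$, so the map is a covering near each edge. The resulting $\pi_1$-injective covering determines a conjugacy class of subgroups. Finally, check that the two constructions are mutually inverse, up to the natural equivalence of branched coverings and conjugation.

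\textbf{Main obstacle.} I expect the main obstacle to be the compactness and core reduction in Step 3 for infinite index subgroups: showing that non-hyperbolic-cyclic subgroups give finite graphs, and that the correspondence is bijective rather than merely surjective. I would handle this via minimal subtrees in the Bass--Serre tree, following \cite{For24}.
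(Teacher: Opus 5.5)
The paper does not prove this statement (it is quoted from \cite{For24}), so your sketch has to stand on its own. Your local count in Step~3 is correct and is the heart of the matter: the lifts of $e$ at $u$ are indexed by $\langle c\rangle/\langle c^{d(u)},c^{\mu(e)}\rangle$, giving $k_{u,e}$ edges of label $\mu(e)/k_{u,e}$ and degree $d(u)/k_{u,e}$. But your next move, passing to the core, destroys exactly these conditions. Conditions (1)--(3) must hold at every vertex of $A$ for every edge of $B$ at its image. They hold for the full quotient $A=H\backslash T$ of the Bass--Serre tree $T$ (the whole cover with its induced decomposition), not for the quotient of a minimal subtree. For example, an elliptic cyclic subgroup $\langle c^d\rangle$ has a single circle as its core, which violates (1) at every edge of $B$. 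In fact $\sum_{u\in p^{-1}(v)}d(u)=[G:H]$, so $A$ is necessarily infinite when $[G:H]=\infty$. The definition does not require $A$ to be finite, and the finiteness you single out as the main obstacle is false. (``Branched'' refers to the $k_{u,e}$ edges over $e$ at $u$, not to infinite index, and surjectivity of $p$ is automatic for $H\backslash T\to G\backslash T$.) The hypothesis also enters differently from how you describe it. A GBS subgroup with no nontrivial elliptic element acts freely on $T$, so it is free; since a free GBS group is $\integers$, it is $\integers$ acting hyperbolically. Every other $H$ contains a nontrivial elliptic element, and since all elliptic subgroups of $G$ are commensurable, $H\cap G_x\neq 1$ for \emph{every} vertex $x$ of $T$. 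That is what makes every vertex space of the cover a circle and every $d(u)$ finite; cocompactness plays no role.

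Second, the check in Step~4 that the two constructions are mutually inverse cannot succeed with the data as defined here. The tuple $(A,\lambda,p,d)$ does not record how the lifted cylinders are rotated when glued to the circle over $u$, and different rotations give different subgroups. Take $G=\integers^2=\langle c,t\mid [c,t]\rangle$, with $B$ one vertex and one loop labelled $1,1$. The subgroups $\langle c^2,t\rangle$ and $\langle c^2,ct\rangle$ are distinct, hence not conjugate, yet both yield one vertex of degree $2$ carrying one loop labelled $1,1$ of degree $2$. The same happens in the paper's own family. The maps $G_{(1,0),(3,0)}\to\integers/2\integers$ given by $a\mapsto 1$, $b\mapsto 0$, $t\mapsto 0$ and by $a\mapsto 1$, $b\mapsto 1$, $t\mapsto 0$ have distinct normal kernels with identical admissible branched coverings. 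So your argument establishes two things: each such subgroup determines an admissible branched covering, and (since compatible rotations always exist) each admissible branched covering is realized by at least one subgroup. A genuine bijection needs the gluing data recorded as part of the covering. Only the first direction is used in the paper, for finite-index subgroups of $G_{(M,0),(N,0)}$ in combination with Theorem~\ref{Lev15}.
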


\begin{definition}
   A non-empty connected subgraph $P \subs A$ is a \textit{$p$-plateau} if the following condition holds for every (oriented) edge $e$ with $v=\partial_0(e)$ belonging to $P$: the label $\lambda(e)$ is divisible by $p$ if and only if $e$ is not contained in $P$. 
\end{definition}

A  nice thing about GBS groups without proper $p$-plateau (for any prime $p$) is that usually, subgroups of GBS groups are described by admissible branched covers of  labelled graphs. But if the GBS group has no proper $p$-plateau, these  come from topological covers and no branching occurs. 

\begin{theorem}\label{Lev15}(Proposition 6.5 in \cite{Lev})
Given a connected labeled graph $(A, \lambda)$, the following conditions are
equivalent:
\begin{itemize}
    \item Every admissible branched covering $p:A' \to A$ is a topological covering
    \item $A$ contains no proper plateau.
\end{itemize}
\end{theorem}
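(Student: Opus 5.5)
The plan is to prove both implications directly from the definition of an admissible branched covering, using one prime $p$ at a time. The bridge between the two conditions is this: an admissible branched covering $p\colon A'\to A$ is a topological covering exactly when every $k_{u,e}=\gcd(d(u),\lambda(e))$ equals $1$. Indeed, condition (1) of the definition then says each edge $e$ at $v=p(u)$ has exactly one lift at $u$, which is the local-homeomorphism condition for graphs. So a non-topological admissible covering is one with some $k_{u,e}>1$, and a prime dividing that $k_{u,e}$ is what links it to a $p$-plateau.

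\emph{No proper plateau $\Rightarrow$ every admissible covering is topological.} I argue the contrapositive. Suppose $k_{u_0,e_0}>1$ for some $u_0\in V(A')$ and edge $e_0$, and pick a prime $p\mid k_{u_0,e_0}$. Then $p$ divides both $d(u_0)$ and $\lambda(e_0)$.
\begin{enumerate}
\item Let $M=\max_{u\in V(A')} v_p(d(u))$. This is finite because $A'$ is finite, and $M\geq 1$ because of $u_0$.
\item Let $Q'$ be a connected component of the subgraph of $A'$ consisting of the vertices $u$ with $v_p(d(u))=M$ and the edges $e'$ with $v_p(d(e'))=M$.
\item Take $u\in Q'$ and a lift $e'$ at $u$ of an edge $e$ of $A$.
\begin{itemize}
\item If $p\mid\lambda(e)$, then $p\mid k_{u,e}$. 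Hence $v_p(d(e'))=v_p(d(u)/k_{u,e})<M$, so $e'\notin Q'$.
\item If $p\nmid\lambda(e)$, then $v_p(d(e'))=M$. At the other endpoint $w$ of $e'$ we have $d(w)=d(e')\,k_{w,\overline{e}}$, so $v_p(d(w))\geq M$, and maximality of $M$ forces equality. Hence $e'$ and $w$ lie in $Q'$.
\end{itemize}
\item Set $P=p(Q')$. It is connected as the image of a connected graph. By step 3, an edge of $A$ at a vertex of $P$ lies in $P$ if and only if its label is prime to $p$. Here one must check that membership of $e$ in $P$ does not depend on the chosen lift. This holds because any lift of $e$ lying in $Q'$ already forces $p\nmid\lambda(e)$. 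So $P$ is a $p$-plateau.
\item $P$ is proper: if $P=A$, then every label would be prime to $p$, contradicting $p\mid\lambda(e_0)$.
\end{enumerate}

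\emph{A proper plateau gives a non-topological admissible covering.} Let $P\subsetneq A$ be a proper $p$-plateau. I build $A'$ explicitly.
\begin{enumerate}
\item Over each vertex of $P$, put one vertex of degree $p$. Over each vertex not in $P$, put $p$ vertices of degree $1$.
\item Over each edge of $P$, put one edge of degree $p$ with the same label. This is admissible because $\gcd(p,\lambda)=1$ for edges of $P$.
\item Over each edge $e$ leaving $P$, put $p$ lifts, each of degree $1$.
\begin{itemize}
\item At the $P$-end, $k=\gcd(p,\lambda(e))=p$ since $p\mid\lambda(e)$, so each lift gets label $\lambda(e)/p$. The $p$ lifts emanate from the single vertex over $\partial_0(e)$.
\item At the far end, the endpoint has degree $1$, so $k=1$. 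The label there is $\lambda(\overline e)$, and one lift ends at each of the $p$ copies.
\end{itemize}
\item Over each edge disjoint from $P$, put $p$ parallel copies of degree $1$ with the same labels.
\end{enumerate}
Verifying conditions (1)--(3) of the definition at every vertex is routine. The map $A'\to A$ is surjective and $A'$ is connected, since every copy outside $P$ is joined through a leaving edge to the single lift of $P$. Such a leaving edge exists because $A$ is connected and $P$ is proper. This covering is not topological, because $k=p>1$ at the boundary of $P$.

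The step I expect to need the most care is step 4 of the first direction. There one must confirm that the plateau condition holds for $P$ as a subgraph of $A$ and not just upstairs in $Q'$. This is handled by the observation that the label condition depends only on $\lambda(e)$, not on the lift.
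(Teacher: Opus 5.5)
The paper gives no proof of this statement; it only cites Levitt's Proposition 6.5, so your argument has to stand on its own. Your first direction (no proper plateau $\Rightarrow$ every admissible covering is topological) is correct, and it is the only direction the paper uses. Taking a component $Q'$ of the locus where $v_p(d)$ reaches its maximum $M\geq 1$ and projecting it to $A$ does produce a proper $p$-plateau. Your check that membership of $e$ in $p(Q')$ does not depend on the chosen lift is also right. Strictly, Step 2 should already record that an edge with $v_p(d(e'))=M$ has both endpoints in the locus. This follows from $d(u)=d(e')\,k_{u,e}$ and maximality, as your Step 3 in effect shows.

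The gap is in the converse construction. You specify lifts only for three kinds of edges: edges of $P$, edges whose far end lies outside $P$ (``at the far end, the endpoint has degree $1$''), and edges disjoint from $P$. You omit edges $e\notin P$ whose endpoints both lie in $P$, including loops at vertices of $P$ that are not in $P$. For such an edge your recipe produces no lifts. Condition (1) then fails, since it demands $\gcd(p,\lambda(e))=p$ lifts at the vertex over $\partial_0 e$, and the map is not even surjective.

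This case is not marginal. In a one-vertex labelled graph, which is the setting of this paper, every proper plateau contains the vertex, so every edge outside $P$ is of this type. For example, take a single loop with labels $2$ and $4$ and $P=\{v\}$: your construction outputs a single vertex with no edges.

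The fix is easy. The plateau condition applied at both ends gives $p\mid\lambda(e)$ and $p\mid\lambda(\overline{e})$. So put $p$ parallel lifts of degree $1$ between the unique lifts of $\partial_0 e$ and $\partial_1 e$ (or $p$ loops if $e$ is a loop), labelled $\lambda(e)/p$ and $\lambda(\overline{e})/p$. Conditions (1)--(3) then hold at both ends.

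Relatedly, your sentence that a leaving edge ``exists because $A$ is connected and $P$ is proper'' is false when $P$ contains every vertex. What connectivity and properness actually give is an edge $e\notin P$ with $\partial_0 e\in P$, possibly with both ends in $P$. That is all you need for $k_{u,e}=p>1$, i.e.\ for the covering to be non-topological. Connectivity of $A'$ needs leaving edges only when some vertex lies outside $P$. In that case each component of the complement is joined to $P$ by an edge, and the $p$ lifts of that edge reach all $p$ copies of the component.
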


It is also useful to know the \textit{rank} of a GBS group $G$, which is the minimal cardinality of a generating set for $G$. The rank of a GBS group is an invariant of the group and notably it is computable from any labelled graph representing the group. 

\begin{theorem}[Theorem 3.2 in \cite{Lev}]
Let $G$ be a GBS group represented by a labelled graph $(A,\lambda)$. 
The rank of $G$ equals $\beta(A) + \mu(A)$, where $\beta(A)$ is the first Betti number of $A$ and $\mu(A)$ is its plateaunic number, the minimal cardinality of a set of vertices meeting
every plateau. 
\end{theorem}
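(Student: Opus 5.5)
The statement just above is the rank theorem (Theorem 3.2 in \cite{Lev}): the rank of a GBS group $G$ given by $(A,\lambda)$ equals $\beta(A)+\mu(A)$. I would prove it by establishing matching upper and lower bounds.

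For the upper bound $\mathrm{rank}(G)\le \beta(A)+\mu(A)$, fix a maximal tree $T\subseteq A$. Then $G$ is generated by the vertex generators $a_v$ together with one stable letter $t_e$ for each of the $\beta(A)$ edges not in $T$. Choose a set $S$ of $\mu(A)$ vertices meeting every plateau. I would show that the subgroup $H$ generated by $\{a_v : v\in S\}$ and the stable letters already contains every $a_v$.
\begin{itemize}
\item Let $V'$ be the set of vertices $v$ with $a_v\in H$.
\item Consider edges $e$ from $v=\partial_0(e)$ to $w=\partial_1(e)$. The edge relation has the form $a_v^{\lambda(e)}=g\,a_w^{\lambda(\overline e)}g^{-1}$, where $g$ is a word in stable letters or is trivial.
\item If $a_w\in H$ and $\lambda(e)=\pm1$, then $a_v\in H$.
\item More generally, if $v$ has several incident edges leading into $V'$ with labels $\lambda(e_i)$ at $v$, then $H$ contains $a_v^{\lambda(e_i)}$ for each $i$. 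Hence $H$ contains $a_v^{d}$, where $d=\gcd_i \lambda(e_i)$.
\end{itemize}
So $a_v\in H$ as soon as the gcd of the labels at $v$ on edges into $V'$ is $1$. If the process stops with $V'\neq V(A)$, I would find a prime $p$ dividing all those gcds. Then the component of $A\setminus V'$ would contain a $p$-plateau, which is a plateau avoiding $S$, a contradiction. The delicate point is that different vertices may have different primes. I would handle this by taking any vertex $v\notin V'$ and a prime $p$ dividing its gcd, and then growing the maximal connected subgraph $P$ through $v$ in which every edge label not divisible by $p$ stays inside $P$. I would check that $P$ is a $p$-plateau disjoint from $V'$.

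For the lower bound, I would use a quotient. Since $\mathrm{rank}$ cannot drop under surjection, it suffices to map $G$ onto a group of rank exactly $\beta(A)+\mu(A)$. The natural choice is to abelianize and reduce mod primes: for a prime $p$, sending $a_v\mapsto 0$ whenever $v$ lies outside the chosen $p$-plateaus gives a surjection of $G\otimes\mathbb{F}_p$ onto a vector space. The plateau condition, that labels at $v\in P$ are divisible by $p$ exactly on edges leaving $P$, is what makes the relations consistent. Since the rank of a group is at least the dimension of $H_1(G;\mathbb{F}_p)$ for every prime $p$, I would aim to show that for a suitable $p$ (or a product over primes, via a map to a finite abelian group) this dimension is $\beta(A)+\mu(A)$.

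The main obstacle is this lower bound when the plateaus use different primes. In that case I would instead map onto $\mathbb{Z}^{\beta}\times\prod_p(\mathbb{Z}/p)^{\mu_p}$, take the primes separately, and combine them by a Chinese remainder argument. I expect the hard part to be the combinatorial step of showing that the minimal hitting number $\mu(A)$ is realized as a sum of independent mod-$p$ contributions.
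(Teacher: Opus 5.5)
The paper does not prove this statement; it quotes it from \cite{Lev}. So I am judging your outline on its own terms. The upper bound has a gap that can be repaired, and the lower bound is missing its key idea.

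\textbf{Upper bound.} Tracking only which vertex groups lie entirely in $H$ is too coarse. Your claim that a stalled process produces a plateau avoiding $S$ is false.

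Take vertices $s,u_1,u_2$ and edges $su_1$ (label $5$ at $s$, $2$ at $u_1$), $su_2$ ($5$ at $s$, $3$ at $u_2$), $u_1u_2$ ($3$ at $u_1$, $2$ at $u_2$), with stable letter $t$ on $u_1u_2$. The only plateaus are $\{s\}$ (for $p=5$) and $A$, so $S=\{s\}$. Your process stalls at $V'=\{s\}$, since the gcds at $u_1,u_2$ are $2$ and $3$. Your growth from $u_1$ with $p=2$ crosses $u_1u_2$, whose label at $u_2$ is even, so what it produces is not a plateau. Yet $H=\langle a_s,t\rangle=G$. Put $c=a_{u_1}^3=ta_{u_2}^2t^{-1}$; then $c^2=a_s^{15}$ and $c^3=ta_s^{10}t^{-1}$ lie in $H$, so $c\in H$, and hence $a_{u_1},a_{u_2}\in H$.

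The repair is to track $k_v=[G_v:H\cap G_v]$.
These are finite: propagate from $S$, which is nonempty because $A$ itself is a plateau. Every stable letter lies in $H$, so computing $[G_e:H\cap G_e]$ from both ends of an edge $e$ from $v$ to $w$ gives $k_v/\gcd(k_v,\lambda(e))=k_w/\gcd(k_w,\lambda(\overline{e}))$. Suppose some $k_v>1$. Fix a prime $p\mid k_v$ and let $M$ be the set of vertices where the $p$-adic valuation $\nu_p(k_v)$ is maximal. The identity forces every edge at a vertex of $M$ whose label there is prime to $p$ to end in $M$, with label prime to $p$ at that end. Hence the components of $M$, joined by the edges with both labels prime to $p$, are $p$-plateaus disjoint from $S$, a contradiction.

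\textbf{Lower bound.} Abelian quotients cannot give it.

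Inside a $p$-plateau the abelianized relations read $\lambda(e)a_v=\lambda(\overline{e})a_w$ with unit coefficients mod $p$. So any cycle in $P$ whose modular value is $\not\equiv 1\pmod p$ kills the plateau's contribution to $H_1(G;\mathbb{F}_p)$. For example, $BS(2,3)$ has $\beta+\mu=2$ but $H_1=\integers$.

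Moreover, $\integers^r\times T$ with $T$ finite has rank $r+\max_p\dim_{\mathbb{F}_p}T/pT$: a maximum over primes, not a sum. So $\integers^\beta\times\prod_p(\integers/p)^{\mu_p}$ has rank $\beta+\max_p\mu_p$, and no Chinese remainder step can add the contributions.

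The plateau condition is really suited to nonabelian quotients built from $\integers/p\rtimes\pi_1(P)$. By Grushko these give $\beta$ plus the maximal number of pairwise \emph{disjoint} plateaus, and that can be smaller than $\mu$. Take a triangle $x,y,z$ with labels $(5,3)$ on $xy$, $(2,5)$ on $yz$, $(3,2)$ on $zx$, listing first the label at the first-named vertex.
\begin{itemize}
\item The plateaus are $\{x,y\}$, $\{y,z\}$, $\{z,x\}$ (for $p=2,3,5$ respectively) and $A$.
\item They pairwise intersect, so $\mu=2$ and the theorem predicts rank $3$.
\item Yet $H_1=\integers^2$, and the disjoint-plateau quotients give only $\beta+1=2$.
\end{itemize}

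What is missing is an argument that works directly with an arbitrary generating set, not with a quotient. One option is a folding (Nielsen-method) argument in the Bass--Serre tree that controls $\beta+\mu$ along the folds.
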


We will also use the \textit{depth profile}, a commensurability invariant for GBS groups used in \cite{For24} and \cite{Verma}. \begin{definition}
    For a non-elementary GBS group $G$ and $V\leq G$ a nontrivial elliptic subgroup, the \textit{depth profile} is given by $$\D(G,V) = \{[V:V\cap gVg^{-1}]: g\in G \text{ is hyperbolic and }q(g)=\pm 1\}.$$
\end{definition}
The depth profile $\D(G,V)$ is a subset of $\naturals$. It becomes a commensurability invariant of the group $G$ when considered under the following equivalence relation on subsets of $\naturals$.  We declare that $S\subs N$ is equivalent to the set $S/r:= \{\frac{n}{\gcd(n,r)}: n\in S\}$ for each $r\in R$, and then take the symmetric and transitive closure of this relation. 

The depth profile is not so easy to calculate, but we will only need to in a case which is covered by the following theorem. 

\begin{theorem}[Proposition 6.2 in \cite{For24}]\label{Prop6.2For} Suppose $G=BS(1,N) \vee \bigvee_{i=1}^r BS(n_i, n_i)$ for some $r\geq 1$ and suppose that $N>1$, each $n_i$ divides $N$, and the set $\{n_1,\dots,n_r,N\}$ is closed under taking least common multiples and contains $1$. Let $V$ be the vertex group. Then 
$$\D(G,V) = \{N^i n_j: i\in \naturals\cup \{0\}, j=1,\dots,r\}.$$
    
\end{theorem}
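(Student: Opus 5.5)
The plan is a direct computation in the Bass--Serre tree $T$ of the one-vertex labelled graph. Write $V=\langle a\rangle$, with stable letters $t$ (relation $tat^{-1}=a^N$) and $s_i$ (relation $s_ia^{n_i}s_i^{-1}=a^{n_i}$). By Definition \ref{modularhom}, $q(s_i)=1$ and $q(t)=N^{\pm1}$. Since $N>1$, the condition $q(g)=\pm1$ says exactly that $g$ has $t$-exponent sum zero. For any $g$, the subgroup $V\cap gVg^{-1}$ is the pointwise stabilizer of the geodesic $[v,gv]$ in $T$, which is a cyclic subgroup $\langle a^M\rangle$. The index we want is then $M$.

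\textbf{Recursion for the index.} Along a reduced edge path, the stabilizer transforms as follows. Suppose the current stabilizer, viewed inside the current vertex group, is $\langle c^{M}\rangle$, and we cross an edge with labels $(p,r)$. The new stabilizer is generated by $c^{\operatorname{lcm}(M,p)}$, which is identified with the $\operatorname{lcm}(M,p)\,r/p$-th power of the next vertex generator. The key invariant to maintain is that, after pulling back to $V$, the index $M$ always has the form $N^{i}\cdot L$. Here $i\ge 0$ is the maximal depth of $t$-descent reached so far, and $L$ is an lcm of some subset of $\{n_j\}$. Because $n_j\mid N$, each $n_j$-edge either leaves a power of $N$ unchanged or multiplies the index by $n_j/\gcd$ of the current index. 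By closure under lcm, we may then always rewrite $L$ as a single $n_j$, where $n_j=1$ is allowed.

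\textbf{Lower bound.} To show every $N^in_j$ occurs, take $g=t^{-i}s_jt^{i}$. This is hyperbolic, being cyclically reduced of positive length after conjugation, and it has $q(g)=1$. Since $t^{-i}Vt^{i}$ contains $V$ with index $N^i$, and $s_j$ fixes exactly $\langle a^{n_j}\rangle$ of its vertex group, the computation gives $[V:V\cap gVg^{-1}]=N^in_j$. For $j$ with $n_j=1$ we could instead take $g=t^{-i}s_jt^{i}$ or $t^{-i}t^{i}$-type words. To obtain $N^i$ itself, we use that $1\in\{n_j\}\cup\{N\}$; if the $1$ is $N$, use $t^{-i}s t^{i}$ type words to get the remaining values.

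\textbf{Upper bound and main obstacle.} The step I expect to be hardest is showing that a general $g$ gives nothing outside the set. My approach is to write $g$ as a reduced word in $t^{\pm1},s_i^{\pm1},a$ and follow the geodesic $[v,gv]$ using the recursion above. I would track separately the running $t$-height, which is a walk returning to $0$, and the accumulated lcm factor. Since each $n_j$ divides $N$, any $n$-factor picked up at height $h$ is transported back to $V$ as $N^{h'}n$ for some $h'$. Overlapping contributions from different heights then collapse to $N^{\max}\cdot\operatorname{lcm}$ of a subset of the $n_j$. By the lcm-closure hypothesis this is $N^in_j$. The hyperbolicity of $g$, together with $r\ge1$, ensures that the path is nonempty, but this does not affect the form of the answer.
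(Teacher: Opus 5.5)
Your lower-bound witness is wrong, and the error is not just a sign convention. With your normalization $tat^{-1}=a^N$ and $i\ge 1$, the element $g=t^{-i}s_jt^{i}$ actually centralizes $a$. Indeed $t^iat^{-i}=a^{N^i}$, and since $n_j\mid N^i$ we get $s_ja^{N^i}s_j^{-1}=a^{N^i}$, so $gag^{-1}=t^{-i}a^{N^i}t^{i}=a$. Hence $gVg^{-1}=V$ and the index is $1$, not $N^in_j$.

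Geometrically, the geodesic $[v,gv]$ first climbs to $t^{-i}v$. The $s_j$-edge there has stabilizer $t^{-i}\langle a^{n_j}\rangle t^{i}$, which already contains $V=t^{-i}\langle a^{N^i}\rangle t^{i}$. Moving toward vertices with larger stabilizers imposes no condition on $V$, so the fact that $V$ has index $N^i$ in $t^{-i}Vt^{i}$ contributes nothing.

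The witness must go the other way: take $g=t^{i}s_jt^{-i}$.
\begin{itemize}
\item Its geodesic descends through $v,tv,\dots,t^iv$, where the pointwise stabilizer is $\langle a^{N^i}\rangle$.
\item It then crosses an edge with stabilizer $t^{i}\langle a^{n_j}\rangle t^{-i}=\langle a^{N^in_j}\rangle$.
\item The climb back to $gv$ costs nothing.
\end{itemize}
One checks directly that this $g$ commutes with $a^{N^in_j}$, so the index is exactly $N^in_j$. Moreover $g$ is hyperbolic (it is conjugate to $s_j$) and $q(g)=1$.

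This is also what your own upper-bound invariant says. There the power of $N$ is the maximal depth reached in the direction of $tv$, i.e.\ toward shrinking stabilizers. That is precisely the direction your witness never goes, so as written the two halves of your argument use opposite orientations.

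The remarks about obtaining $N^i$ should be replaced. The word $t^{-i}t^{i}$ is the identity. Since $N>1$, the element $1$ of the lcm-closed set must be some $n_{j_0}$, so $N^i$ comes from $t^is_{j_0}t^{-i}$; the case ``the $1$ is $N$'' cannot occur.

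The upper bound, by contrast, is fine and easier than you fear. Let $M$ be the exponent of the path stabilizer in the current vertex generator. Then:
\begin{itemize}
\item a step toward a smaller stabilizer gives $M\mapsto M/\gcd(M,N)$;
\item a step toward a larger stabilizer gives $M\mapsto NM$;
\item an $s_j$-step gives $M\mapsto \operatorname{lcm}(M,n_j)$.
\end{itemize}
Since $n_j\mid N$, these moves preserve the set of numbers $N^xL$ with $L$ in the lcm-closure of the $n_j$. The condition $q(g)=\pm1$ forces $t$-exponent sum $0$, so $gv$ is at the same height as $v$, and there $M$ equals $[V:V\cap gVg^{-1}]$.

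The one correction is that an lcm of several $n_j$ can equal $N$ itself, which is not an $n_j$ (e.g.\ $N=6$ with $n_j\in\{1,2,3\}$). Such a term must be absorbed as $N^{x+1}\cdot 1$, not rewritten as a single $n_j$.
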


For the quasi-isometric classification of the groups $G_{(m,0),(n,0)}$ later on, we will also use the following theorem. 

\begin{theorem}[Theorem 0.1 in \cite{Whyte}]\label{Whyte}
If $G$ is a GBS group, then exactly one of the following is true:
\begin{enumerate}
    \item $G$ contains a subgroup of finite index of the form $F_n \times \integers$
    \item $G=BS(1,n)$ for some $n>1$
    \item $G$ is quasi-isometric to $BS(2,3)$. 
\end{enumerate}
\end{theorem}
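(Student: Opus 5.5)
Since this is Whyte's result, quoted from \cite{Whyte}, I would only reconstruct the strategy. I would split according to the modular homomorphism $q:G\to\rationals^*$ of Definition~\ref{modularhom}. Elementary GBS groups ($\integers$, $\integers^2$, $K$) are virtually $F_n\times\integers$ with $n\le 1$, so assume $G$ is non-elementary.

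\emph{Case 1: $q(G)\subseteq\{\pm 1\}$ (unimodular).} Fix a nontrivial elliptic element $a$. The conjugates $ga^kg^{-1}=a^{\pm k}$ show that $G$ acts on the commensurability class of $\langle a\rangle$ through $\{\pm1\}$. Using finiteness of the labelled graph, I would find a single power $a^N$ that lies in every vertex stabilizer of the Bass--Serre tree. Passing to the index $\le 2$ subgroup where $q\equiv 1$, the subgroup $\langle a^N\rangle$ becomes normal, indeed central. The quotient $G/\langle a^N\rangle$ then acts on the Bass--Serre tree with finite vertex stabilizers, so it is virtually free. This gives a finite index subgroup that is a central extension of $F_n$ by $\integers$. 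Such an extension splits because $F_n$ is free, so the subgroup is $F_n\times\integers$.

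\emph{Case 2: $q(G)\not\subseteq\{\pm1\}$.} If the Bass--Serre tree is a line, i.e.\ the reduced graph is a single loop with one label $\pm1$, then $G=BS(1,n)$ with $|n|>1$. Up to the harmless sign issue, this gives alternative~2. Otherwise the tree branches. Then $G$ is quasi-isometric to a ``tree of lines'': over each vertex of the Bass--Serre tree sits a copy of $\reals$, and the edge maps rescale these lines by ratios $|\lambda(e)/\lambda(\bar e)|$, at least one of which is $\ne 1$. I would build an explicit quasi-isometry to the model space for $BS(2,3)$ in two steps. First, choose a coarse height function $h$ on the tree from $\log|q|$. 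Second, construct a bilipschitz map between the two trees that respects heights, so that it can be lifted fiberwise to the lines.

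The main obstacle is this second step, where the branching numbers in the two trees must be matched. I would use Whyte's theorem that, for non-amenable uniformly discrete spaces, quasi-isometries are at bounded distance from bijections, i.e.\ bilipschitz equivalences. This is the step where non-unimodularity together with branching, which makes the space non-amenable, is essential. It lets me adjust the counts of edges going ``up'' and ``down'' at each vertex level by level.

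\emph{Exclusivity.} $BS(1,n)$ is amenable, while groups in alternatives~1 and~3 contain free subgroups and are non-amenable; amenability is a quasi-isometry invariant, so alternative~2 is disjoint from the others. To separate alternatives~1 and~3, I would use quasi-isometric rigidity of the tree of lines, in the style of Mosher--Sageev--Whyte: a quasi-isometry coarsely preserves the vertex cosets. It therefore also preserves the distortion of elliptic subgroups. In $F_n\times\integers$ that distortion is linear, whereas in $BS(2,3)$ the elliptic subgroup is exponentially distorted. Hence no group in alternative~1 is quasi-isometric to $BS(2,3)$.
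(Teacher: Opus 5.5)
The paper gives no proof of this statement: it is quoted from \cite{Whyte} and used as a black box. So your proposal has to be judged on its own. Your Case~1 is correct. In the unimodular case you get a cyclic normal subgroup $\langle a^N\rangle$ lying in every vertex stabilizer, central in $\ker q$, with virtually free quotient, and the resulting central extension of a free group splits.

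The genuine gap is in Case~2, where you draw the line between alternatives~2 and~3 in the wrong place. The Bass--Serre tree of $BS(1,n)$ is not a line. It is the $(|n|+1)$-regular tree, which branches everywhere. What singles out $BS(1,n)$ is this: with respect to the height $\log|q|$, every vertex has exactly one edge on one vertical side (the edge coming from the label $\pm1$). So all branching is one-sided, and $G$ fixes an end of $T$. Your justification that ``non-unimodularity together with branching'' makes the space non-amenable is therefore false: $BS(1,n)$ has both properties and is amenable. Nothing in your Case~2 argument uses the hypothesis $G\not\cong BS(1,\pm n)$. As written, the same reasoning would produce a quasi-isometry $BS(1,n)\to BS(2,3)$, contradicting your own exclusivity paragraph.

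What is missing is a lemma of the following form. If $q(G)\not\subseteq\{\pm1\}$ and $G\not\cong BS(1,\pm n)$, then $T$ branches, coarsely, in both vertical directions. The lemma must be coarse. For $G_{(1,0),(2,0)}=BS(1,1)\vee BS(1,2)$, every vertex has a single edge on one vertical side (from the label $1$ of the $(1,2)$ loop). Branching on that side appears only after moving along the height-preserving edges coming from the $(1,1)$ loop. This two-sided branching is what gives non-amenability, and it is what any level-by-level matching needs.

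Even granting the lemma, the matching step is only announced. Whyte's bilipschitz theorem for non-amenable spaces gives a bijection at bounded distance from a quasi-isometry, but it gives no control on heights. You need a tree map that preserves heights up to bounded error when the height increments of the two trees are unrelated, e.g.\ $\pm\log\frac32$ for $BS(2,3)$ versus $0,\pm\log 2$ for $G_{(1,0),(2,0)}$. Constructing that map is the actual content of Whyte's theorem, and your proposal does not supply it.

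Three smaller points:
\begin{itemize}
\item Alternative~1 contains the amenable groups $\integers$, $\integers^2$, $K$, so your claim that groups in alternative~1 contain free subgroups is false. To separate alternatives~1 and~2, use instead that $BS(1,n)$ is solvable but not virtually abelian.
\item For alternatives~1 versus~3, the Dehn function (at most quadratic versus exponential) is a simpler quasi-isometry invariant than the Mosher--Sageev--Whyte argument.
\item The sign issue is not harmless for the statement as quoted. For $n,m>1$ we have $BS(1,-n)\not\cong BS(1,m)$, so alternative~2 must be read with $|n|>1$.
\end{itemize}
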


\subsection{Subgroups of  graphs of groups}
Let $G$ be the fundamental group of a  graph of groups  with underlying graph consisting of one vertex and one loop. 
The following theorem from Scott and Wall describes a graph of groups structure on a subgroup $H \leq G$. 

\begin{theorem} (Theorem 3.14 in \cite{ScottWall}) If $H$ is a subgroup of $G = A*_C$, then $H$ is the fundamental group of a graph of groups $\Gamma$. The vertices of $\Gamma$ correspond to double cosets $HgA$ (for $g\in G$) and the corresponding groups are $H \cap gAg^{-1}$. The edges of $\Gamma$ correspond to the double cosets $HgC$ and the corresponding groups are $H \cap gCg^{-1}$. The injections of the corresponding groups are inclusions or conjugation maps. 
\end{theorem}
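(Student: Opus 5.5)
The plan is to deduce this from Bass--Serre theory by letting $H$ act on the Bass--Serre tree $T$ of the splitting $G = A*_C$. Here $A*_C$ denotes the HNN extension with stable letter $t$ coming from the one vertex one loop graph of groups. Concretely, $T$ has vertex set $G/A$ (cosets $gA$) and edge set $G/C$ (cosets $gC$). The edge $gC$ joins $gA$ to $gtA$, with orientation chosen so that the two edge maps $C\to A$ correspond to the inclusion and to conjugation by $t$. The group $G$ acts by left multiplication, without inversions. The stabilizer of the vertex $gA$ is $gAg^{-1}$ and the stabilizer of the edge $gC$ is $gCg^{-1}$. I will take as known that $T$ is a tree; this is the standard construction via the graph of spaces $X$ above, since $T$ is the Bass--Serre tree obtained from the universal cover $\widetilde X$ by collapsing lifts of vertex spaces and edge spaces.

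Restrict the action to $H\le G$. Then $H$ acts on $T$ without inversions, and the structure theorem of Bass--Serre theory gives $H$ as the fundamental group of the quotient graph of groups over $\Gamma = H\backslash T$. The vertices of $\Gamma$ are the $H$-orbits of cosets $gA$, which are exactly the double cosets $HgA$. Likewise, the edges of $\Gamma$ are the $H$-orbits of cosets $gC$, which are the double cosets $HgC$. The group attached to a vertex is the stabilizer in $H$ of a chosen orbit representative, namely $\mathrm{Stab}_H(gA)=H\cap gAg^{-1}$. Similarly, the group attached to an edge is $H\cap gCg^{-1}$.

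The remaining step is to describe the edge monomorphisms. Choose a maximal tree $\Gamma_0\subseteq\Gamma$ and lift it to a subtree $\widetilde\Gamma_0\subseteq T$. Then extend to a lift of every edge of $\Gamma$ so that each lifted edge has at least one endpoint in $\widetilde\Gamma_0$. For an edge $\varepsilon$ lifted to $gC$, let its endpoint in $T$ be $g'A$, and let the chosen representative of the corresponding vertex of $\Gamma$ be $h g'A$ with $h\in H$. The edge map is then
$$H\cap gCg^{-1}\hookrightarrow H\cap g'Ag'^{-1}\xrightarrow{\ x\mapsto hxh^{-1}\ } H\cap hg'Ag'^{-1}h^{-1}.$$
This is an inclusion when the endpoint is already the chosen representative, and a conjugation map otherwise. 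So all injections are inclusions or conjugations, as claimed.

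I expect the main obstacle to be this bookkeeping of representatives. The point needing care is the choice of lifts: there are two kinds of endpoints, the one coming from $C\subseteq A$ and the one coming from $tCt^{-1}\subseteq A$. One must check that the resulting monomorphisms, together with the stable letters $h$ for edges outside $\Gamma_0$, really give a presentation of $H$. That check is precisely the content of the Bass--Serre structure theorem, which I would invoke rather than reprove. Note also that $\Gamma$ may be infinite if $[G:H]=\infty$; this is harmless, since the structure theorem does not require a finite quotient. In our applications $H$ will have finite index, so $\Gamma$ will be finite.
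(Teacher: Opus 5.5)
Your proof is correct, but it takes a different route from the one behind the paper's citation. The paper gives no argument of its own here beyond the Remark on choosing representatives. It quotes Scott--Wall, whose framework is the graph of spaces $X$ recalled in Section 2. In that framework the theorem is proved with covering spaces: the cover $X_H\to X$ with $\pi_1(X_H)=H$ is again a graph of spaces. Its vertex and edge spaces are the components of the preimages of $X_v$ and of $X_e\times[0,1]$. These components are indexed by $H\backslash G/A$ and $H\backslash G/C$, and their fundamental groups are $H\cap gAg^{-1}$ and $H\cap gCg^{-1}$.

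You instead let $H$ act on the Bass--Serre tree $T$ and invoke the structure theorem. The covering-space route produces $H$ directly as $\pi_1$ of a graph of spaces, which is how the paper defines the fundamental group of a graph of groups. Your route yields the algebraic fundamental group, so it implicitly uses that the two notions agree; this is standard, but worth saying.

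In exchange, your route makes the edge monomorphisms and the choice of representatives explicit. Inclusion at the endpoint lying in the lifted maximal tree, and conjugation by some $h\in H$ at the other end, is exactly the content of the paper's Remark. Your lifting also correctly handles the fact that several edges $HgaC$ ($a\in A$) can leave the same vertex $HgA$, as with the edges $Ha^iC$ in the paper's example. The Remark's phrase ``the same representative $g$'' glosses over this.

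One small slip: if the edge $gC$ runs from $gA$ to $gtA$, well-definedness requires $t^{-1}Ct\subseteq A$, not $tCt^{-1}\subseteq A$. The second edge map is then conjugation by $t^{-1}$, as in the paper's example where $t^{-1}a^mt=b$. If instead $tCt^{-1}\subseteq A$, the edge should run to $gt^{-1}A$.
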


\begin{remark}
 When writing down the graph of groups, we choose a set of representatives for the double cosets $HgA, HgC$. We can always choose the same representative $g$ for the edge $HgC$ as for its starting vertex $HgA$. Then the injection from the group for vertex $HgC$ to the group for $HgA$ is the inclusion map. If $HgtA=Hg'A$ for one of the chosen representatives $g'$, then there exists $h\in H$ such that $h gt \widehat{a} = g'$ for some $h\in H$ and $\widehat{a}\in A$. 
 The injection from the group for vertex $HgC$ to $HgtA$ is conjugation by $h$. 
\end{remark}

\begin{center}
\begin{tikzpicture}
     \node at (0,0) (1){$\langle a,b \rangle$};
\path[every node/.style={font=\sffamily\small}]
(1)   edge[in=22.5,out=-22.5, loop, distance=3.5cm] node[right]  {$\langle a^{m} \rangle$} (1);
\node at (2,1) (dots) {$a^{m}$};
\node at (2,-1) (dots) {$b$};
\node at (2,-2) (label) {$G_{(m,0),(0,1)}$};

\node at (6,0) (arrow) {$\longleftarrow$};

\node at (8,0) (1){$\langle a^{m},b \rangle$};
\path[every node/.style={font=\sffamily\small}]
(1)   edge[in=45,out=15, loop, distance=3.5cm] node[right]  {$\langle a^{m} \rangle$} (1);
\path (1)   edge[in=-45,out=-15, loop, distance=3.5cm] node[right]  {} (1);
\node at (11,0) (dots) {$\vdots \hspace{1cm} m$  many loops};
\node at (11,-2) (label) {Subgroup $H$ of $G_{(m,0),(0,1)}$};

\node at (9.5,1.8) (dots) {$a^{m}$};
\node at (10.7,0.8) (dots) {$b$};
 \end{tikzpicture}
 \end{center}

For example, let $H$ be the kernel of the map $G_{(m,0),(0,1)}\to \integers/m\integers$ sending $a\mapsto 1$, $b \mapsto 0$, $t \mapsto 0$. The subgroup $H$ has a graph of groups structure with one vertex and $m$ many loops, because there is just one double coset $H1A$ for the vertex group $A= \langle a,b:[a,b]=1\rangle$ and $m$ many double cosets $H1C, HaC, \dots, Ha^{m-1}C$ for the edge group $C=\langle a^m \rangle$. The edges all start at $H1A$ and end at $HtA=H1A$. The vertex group is $H\cap A = \langle a^m, b \rangle$ while the group for the edge $H a^i C$ is $H \cap a^i C a^{-i} = H \cap \langle a^m \rangle = \langle a^m \rangle$. For each edge, the origin edge map is inclusion $a^m \mapsto a^m$. The terminal edge map is conjugation by $t^{-1}\in H$. It sends $a^m \mapsto t^{-1} a^m t = b$.

\subsection{JSJ decompositions}
A group can have many different decompositions as a graph of groups. Among these, we are often interested in decompositions that are 'maximal' in  that they cannot be decomposed further (in a meaningful way) if we restrict the types of groups appearing in the graph of groups. The idea of a JSJ decomposition is that it is a maximal decomposition as a graph of groups with two-ended edge groups. The terminology comes from the study of 3-manifolds. See \cite{GL} for a survey of the theory of JSJ decompositions. 

    Let $T$ be a tree and let $G$ be a finitely generated group acting
on $T$ by isometries, without edge inversions. Let $\fancyA$ be the family of two-ended subgroups of $G$. We say that $T$ is an \textit{$\fancyA$-tree} if  the edge stabilizers $G_e$ of $T$ are all in $\fancyA$. (More generally, one could replace $\A$ with any family of subgroups of $G$ that is stable under taking subgroups and under conjugation and consider other types of maximal decompositions.)

\begin{definition}

A subgroup $H\leq G$ is \textit{universally elliptic} if it fixes a point in every $\A$-tree. An $\A$-tree is \textit{universally elliptic} if all of its edge stabilizers $G_e$ are universally elliptic subgroups of $G$. An $\A$-tree $T$ \textit{dominates} another $\A$-tree $T'$ if every vertex stabilizer of $T$ is elliptic in $T'$.  A \textit{JSJ tree} of $G$ is an $\A$-tree that is universally elliptic and dominates any other universally elliptic $\A$-tree $T'$. The quotient graph of groups is called a \textit{JSJ decomposition} of $G$. 

\end{definition}

JSJ trees are surveyed in \cite{GL}. A finitely presented group always has a JSJ decomposition over $\A$. However, JSJ decompositions are not unique. Instead there is a deformation space worth of JSJ decompositions, where any one can be transformed into any other through a finite sequence of certain kinds of moves (see \cite{ForDeform} and \cite{GL2}). But there is a canonical object called the \textit{JSJ tree of cylinders}. 

\begin{definition}
    Given a JSJ decomposition of $G$, let $T$ be the corresponding Bass-Serre tree. We say that edge stabilizers $G_e$ and $G_{e'}$ are \textit{commensurable} if their intersection is finite index in both $G_e$ and $G_{e'}$. This defines an equivalence relation on the edges of $T$. The equivalence classes of edges are called \textit{cylinders}. 
\end{definition}

Every cylinder is a subtree of $T$, and two distinct cylinders meet in at most one point \cite{GLCylinder}.

\begin{definition}
The \textit{tree of cylinders} of $T$ is the bipartite tree $T_c$ with vertex
set $V(T_c) = V_0(T_c) \sqcup V_1(T_c)$ and edge set $E(T_c)$ defined as follows:
\begin{itemize}
    \item $V_0(T_c)$ is the set of vertices $x$ of $T$ belonging to (at least) two distinct cylinders
    \item $V_1(T_c)$ is the set of cylinders $Y$ of $T$
    \item There is an edge between $x\in V_0(T_c)$ and $Y\in V_1(T_c)$ if and only if $x$ (viewed as a vertex of $T$) belongs to $Y$ (viewed as a subtree of $T$). 
\end{itemize}
\end{definition}

Up to $G$-equivariant isomorphism, there is a unique JSJ tree of cylinders \cite{GLCylinder}. 

\begin{definition}\label{QI}
    A map $f:(X,d_X) \to (Y,d_Y)$ is \textit{quasi-isometry} if there exist $K\geq 1$, $C\geq 0$ such that $$\frac{1}{K} d_X(x_1,x_2) - C \leq d_Y(f(x_1),f(x_2)) \leq K d_X(x_1,x_2) + C$$
for all $x_1,x_2\in X$. Quasi-isometry is an equivalence relation on metric spaces. 
\end{definition}

In general, JSJ decompositions are robust under quasi-isometries (\cite{Papasoglu}, \cite{CashenMartin}, \cite{ShepherdWoodhouse}), but the JSJ tree of cylinders can capture even more detailed information about the  decomposition.

\begin{theorem}(Theorem 2.8 in \cite{ShepherdWoodhouse})
Let $\psi:G \to G'$ be a quasi-isometry of finitely presented one-ended groups. Let $T$ and $T'$ be Bass-Serre trees  for JSJ decompositions of $G$ and $G'$ respectively. Let $T_c$ and $T_c'$ be the corresponding JSJ trees of cylinders. Then there is a unique isomorphism $\widehat{\psi}:T_c \to T_c'$ such that:
\begin{enumerate}
    \item $\widehat{\psi}(V_0 T_c) = V_0 T_c'$ and $\widehat{\psi}(V_1 T_c) = V_1 T_c'$
    \item $\psi(G_v)$ is finite Hausdorff distance from $G'_{\widehat{\psi}(v)}$ for $v\in V(T_c)$ and $\psi(G_e)$ is finite Hausdorff distance from $G'_{\widehat{\psi}(e)}$ for $e\in E(T_c)$. Moreover the restrictions $\psi:G_v \to G'_{\widehat{\psi}(v)}$ and $\psi:G_e \to G'_{\widehat{\psi}(e)}$ are quasi-isometries with respect to the intrinsic metrics of the vertex and edge stabilizers
    \item (There is one more condition about a uniform bound on images of cosets, but we will not use it.)
\end{enumerate}
    
\end{theorem}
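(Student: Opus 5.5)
This result is quoted from \cite{ShepherdWoodhouse}, and in this paper we simply invoke it. If I had to prove it, I would build $\widehat{\psi}$ from coarse-geometric data alone, using the quasi-isometry invariance of JSJ decompositions over two-ended groups due to Papasoglu \cite{Papasoglu}. The basic input is that for a finitely presented one-ended group, the cosets of the edge stabilizers of a JSJ tree are exactly the quasi-lines that coarsely separate the group. In the same way, the cosets of the vertex stabilizers are the maximal subsets not coarsely separated by any such quasi-line, or, for hanging/cylinder-type vertices, the subsets assembled from them. All of these notions are defined up to finite Hausdorff distance, so the quasi-isometry $\psi$ transports them to $G'$.

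\emph{Step 1: the map on $V_1$.} A cylinder $Y$ of $T$ is an equivalence class of edges whose stabilizers are pairwise commensurable. Geometrically, all these stabilizers lie at finite Hausdorff distance from a single separating quasi-line $L_Y \subs G$. I would define $\widehat{\psi}(Y)$ to be the cylinder $Y'$ of $T'$ for which $L_{Y'}$ is at finite Hausdorff distance from $\psi(L_Y)$. By Papasoglu's theorem such a $Y'$ exists. It is unique because stabilizers of distinct cylinders are non-commensurable two-ended subgroups, and such subgroups are at infinite Hausdorff distance.

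\emph{Step 2: the map on $V_0$.} A vertex $x\in V_0(T_c)$ lies in two distinct cylinders. Its stabilizer $G_x$ is therefore a vertex group that is coarsely bounded by at least two non-commensurable separating quasi-lines. I would send $x$ to the vertex $x'\in V_0(T_c')$ whose stabilizer lies at finite Hausdorff distance from $\psi(G_x)$. Existence and uniqueness come from the coarse characterization of vertex groups, together with the fact that distinct vertex stabilizers are not at finite Hausdorff distance.

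\emph{Step 3: adjacency, restrictions, and uniqueness.} Adjacency $x\in Y$ corresponds to the coarse containment $L_Y \subs N_R(G_x)$, which $\psi$ preserves. Hence $\widehat{\psi}$ is a graph morphism, and running the same construction for a quasi-inverse of $\psi$ produces its inverse. The statements about the restrictions $\psi|_{G_v}$ being quasi-isometries follow because vertex and edge stabilizers are finitely generated and undistorted in $G$. Uniqueness of $\widehat{\psi}$ follows from the Hausdorff-distance condition in item 2, since distinct vertices have stabilizers at infinite Hausdorff distance.

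The main obstacle is the coarse characterization of vertex groups in Step 2, especially for the $V_0$ vertices that are not rigid, such as hanging surface-type or cylinder-adjacent vertices. Here one must show that $\psi$ cannot send a vertex group of one type to something of another type. This is exactly where the full strength of Papasoglu's quasi-isometry invariance of JSJ decompositions is needed, rather than a naive separation argument.
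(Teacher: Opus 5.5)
The paper gives no proof of this statement. It is imported from \cite{ShepherdWoodhouse} and only applied, so your opening sentence matches the paper's treatment exactly and is all that is needed. The sketch you append, however, would not stand as a proof. In Step~1, edge-stabilizer cosets are not ``exactly'' the coarsely separating quasi-lines. As soon as there is a hanging vertex, lifts of non-peripheral closed geodesics of the surface also coarsely separate $G$, yet they are not coarsely equivalent to any edge stabilizer. What singles out the cylinder quasi-lines is that no other separating quasi-line crosses them. Getting existence of $Y'$ needs that refinement, together with the quasi-isometry invariance of crossing.

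The more serious gap is that you treat the vertex $Y\in V_1(T_c)$ as if it were the single coarse class $L_Y$. The stabilizer $G_Y$ is generally not two-ended.
For the paper's GBS groups $G_{(m,0),(n,0)}$ there is one cylinder, and $G_Y=G$.
In $G_{(1,0),(0,1)}$, the abelian vertex group $\langle a,b\rangle$ normalizes $\langle a\rangle$, so it preserves the cylinder of the edge with stabilizer $\langle a\rangle$. That cylinder vertex, and the $T_c$-edge joining it to the base vertex, therefore have stabilizers containing $\integers^2$.

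So your uniqueness reason (``stabilizers of distinct cylinders are non-commensurable two-ended subgroups'') is true only of edge stabilizers. Matching $L_Y$ with $L_{Y'}$ does not give item~2 for $v=Y$. Indeed, $G_Y$ lies at finite Hausdorff distance from the union of the cosets $gG_e$ of the edges of $Y$. Each of these cosets is at finite but unbounded Hausdorff distance from $L_Y$ (think of the parallel lines $b^j\langle a\rangle$). To control $\psi(G_Y)$, you need $\psi$ to send \emph{every} such coset within a \emph{uniform} distance of an edge coset of $Y'$, and conversely. That is the kind of control recorded in the suppressed third condition, and nothing in your sketch supplies it.

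Step~2, as you acknowledge, is asserted rather than argued.

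Finally, the claim in Step~3 that stabilizers are undistorted is false. In $\langle a,b,s,t\mid [a,b],\ tbt^{-1}=a,\ sas^{-1}=a^2\rangle$ the rigid vertex group $\langle a,b\rangle$ is exponentially distorted. The claim is also unnecessary. Suppose $H\le G$ and $H'\le G'$ are finitely generated and $\psi(H)$ is at finite Hausdorff distance from $H'$. For $h\in H$ and a generator $s$ of $H$, nearest points in $H'$ to $\psi(h)$ and $\psi(hs)$ are at bounded distance in $G'$. They are then at bounded distance in $H'$, because the distortion function of $H'$ is finite-valued. Applying the same argument to a quasi-inverse shows that the induced map $H\to H'$ is a quasi-isometry of word metrics.
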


We are interested in JSJ decompositions of certain tubular groups over two-ended edge groups. In our setting, there is an equivalent characterization of JSJ decompositions that is easier to show than than the original definition, because it works with a particular graph of groups. In order to state the equivalent characterization, we need some terminology from coarse geometry. We use this terminology minimally and include it here just for completeness.  

We say that subsets of a metric space $X$ are \textit{coarsely equivalent} if they are bounded Hausdorff distance apart. Quasi-isometries respect coarse equivalence of subsets. If $\mathcal{P}$ is a set of coarse equivalence classes of subsets of $X$ and $\mathcal{P}'$ is a set of coarse equivalence classes of subsets of another metric space $Y$, a quasi-isometry of pairs $(X, \mathcal{P}) \to (Y, \mathcal{P}')$ is a quasi-isometry $X \to Y$ that induces a bijection between $\mathcal{P}$ and $\mathcal{P}'.$

In a geodesic metric space $(X, d_X)$, a path-connected subset $L$ has the induced length metric $d_L$. A \textit{quasi-line} in $X$ is such a path-connected subset $L$ where $(L, d_L)$ is quasi-isometric to $\reals$ and there exist unbounded, non-decreasing real functions $\rho_0$ and $\rho_1$ such that for all $x, x'\in L$ we have $$\rho_0(d_L(x,x')) \leq d_X(x, x') \leq \rho_1(d_L(x,x')).$$ A \textit{peripheral structure $\mathcal{P}$} on a geodesic metric space is a collection of coarse equivalence classes of quasi-lines. In particular, if $G$ is a finitely generated group and $\mathcal{H}$ is a finite collection of two-ended subgroups of $G$, then $\mathcal{H}$ induces a peripheral structure consisting of distinct coarse equivalence classes of conjugates of elements of $\mathcal{H}$. 

Let $\Gamma$ be a graph of groups over two-ended edge groups. For a vertex group $G_v$ of $\Gamma$, the \textit{peripheral structure coming from incident edge groups}, denoted $\mathcal{P}$, is the set of  coarse equivalence classes in $G_v$ of $G_v$-conjugates of the images of the edge maps $G_{e} \to G_v$ for edges $e$ incident to $v$.

\begin{definition}
A vertex $v$ is \textit{hanging} if the pair $(G_v, \mathcal{P}_v)$ is quasi-isometric to $(\widetilde{\Sigma}, \mathcal{P}_{\partial \Sigma})$, where $\widetilde{\Sigma}$ is the universal cover of the hyperbolic pair of pants $\Sigma$, and $\mathcal{P}_{\partial \sigma}$ is the peripheral structure on the universal cover $\widetilde{\Sigma}$ consisting of the coarse equivalence classes of the components of the preimages of the boundary curves of $\Sigma$. 

A vertex $v$ is \textit{rigid} if it is not two-ended, not hanging, and does not split over a two-ended subgroup relative to its incident edge groups, i.e. in such a way that the incident edge groups are conjugate into one of the vertex groups in the supposed splitting of $G_v$. 

The notions of rigid and hanging vertices in a graph of groups extend to vertices in the tree of cylinders. 

\end{definition}

The following  equivalent characterization of JSJ decompositions  is introduced in \cite{CashenMartin}. It is used in \cite{Edletzberger} to study right-angled Coxeter groups. 

\begin{definition}\label{JSJCashen}(Definition 2.7 in \cite{CashenMartin}, or Lemma 2.15 in \cite{Edletzberger})
Let $G$ be a finitely presented one-ended group that is not commensurable to a surface group. A \textit{JSJ decomposition of $G$} is a (possibly trivial) graph of groups decomposition $\Gamma$ with two-ended edge groups satisfying the following conditions:
\begin{enumerate}
    \item Every vertex group is either two-ended, hanging or rigid
    \item If $v$ is a valence one vertex with two-ended vertex group, then the incident edge group does not surject onto $G_v$
    \item Every cylinder in the Bass-Serre tree of $\Gamma$ that contains exactly two hanging vertices also contains a rigid vertex. 
\end{enumerate}
\end{definition}

\section{Commensurability classification}
We are interested in one vertex one loop tubular groups $G_{(x,y),(z,w)}$. 
\begin{center}
\begin{tikzpicture}
\node at (0,0) (3){$\langle a,b: [a,b]=1 \rangle$};
\path[every node/.style={font=\sffamily\small}]
(3)   edge[loop] node[above]  {$\langle a^x b^y \rangle$} (3);
\node at (-1.25,0.75) {$a^z b^w$};
\node at (1.25,0.75) {$a^x b^y$};
\end{tikzpicture}
\end{center}

\subsection{Nonzero intersection number}
Here we have $G_{(x,y),(z,w)}$ where the ``intersection number" $zw-yz \ne 0$.This terminology in used \cite{Cat0}. 

\begin{lemma}\label{basis}
    We can reduce to the case where the tubular group is of the form $G_{(k,0),(j,\ell)}$. 
\end{lemma}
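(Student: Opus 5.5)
The plan is to change basis of the vertex group $\integers^2=\langle a,b\rangle$ by an element of $GL_2(\integers)$. Such a change induces an isomorphism of the one vertex one loop graph of groups, hence of the fundamental groups. Concretely, the group $G_{(x,y),(z,w)}$ has vertex group $A=\integers^2$, with the two edge maps sending a generator of the edge group $\integers$ to the vectors $u=(x,y)$ and $v=(z,w)$ in $A$. Any automorphism $\phi\in GL_2(\integers)$ of $A$ extends to an isomorphism
$$G_{(x,y),(z,w)}\to G_{\phi(x,y),\phi(z,w)},\qquad a\mapsto \phi(a),\quad b\mapsto\phi(b),\quad t\mapsto t,$$
since the presentation transforms relation by relation. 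This holds provided we write $a^pb^q$ for the vector $(p,q)$, which is legitimate because $A$ is abelian.

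It therefore suffices to find $\phi$ with $\phi(x,y)=(k,0)$. Let $k=\gcd(x,y)$, which is nonzero since $(x,y)\neq(0,0)$. Then $(x/k,y/k)$ is primitive, so by B\'ezout there are integers $s,r$ with $s\,(x/k)+r\,(y/k)=1$. The matrix
$$\phi=\begin{pmatrix} s & r\\ -y/k & x/k\end{pmatrix}$$
has determinant $1$ and sends $(x,y)$ to $(k,0)$. Set $(j,\ell)=\phi(z,w)$. This gives an isomorphism $G_{(x,y),(z,w)}\cong G_{(k,0),(j,\ell)}$.

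Finally, note that the intersection number $\det\begin{pmatrix}x&y\\z&w\end{pmatrix}$ is multiplied by $\det\phi=\pm1$. So the nonzero intersection number hypothesis is preserved, and in the new form it reads $k\ell\neq0$. In particular $\ell\ne0$, which is what later constructions will need.

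The one point needing care, and the only real obstacle, is checking that the relabeling respects the orientation convention of the relation $ta^mb^nt^{-1}=a^kb^\ell$. One must make sure the first pair is the one that gets reduced to $(k,0)$. If instead one wants to normalize the other edge map, we can replace $t$ by $t^{-1}$, which gives $G_{(x,y),(z,w)}\cong G_{(z,w),(x,y)}$. With that symmetry either pair can be brought to the form $(k,0)$, and the argument above applies.
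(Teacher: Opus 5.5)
Your proof is correct and is essentially the paper's argument. The paper also uses B\'ezout to complete $(x,y)/\gcd(x,y)=(m,n)$ to a basis $\alpha=a^mb^n$, $\beta=a^Nb^M$ of $\integers^2$ with $mM-nN=1$, and your matrix $\phi$ is exactly the corresponding coordinate change (take $s=M$, $r=-N$). Your remarks on the intersection number and on the $t\mapsto t^{-1}$ symmetry are correct extras.
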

Starting with $G_{(x,y),(z,w)}$, write $(x,y) = k(m,n)$ where $k=\gcd(x,y)$ and $m=\frac{x}{\gcd(x,y)}, n=\frac{y}{\gcd(x,y)}$ so that $\gcd(m,n)=1$. By Bezout's lemma, there exist $M, N$ such that $mM - nN = \gcd(m,-n)=1$. Then note $\alpha= a^m b^n$ and $\beta = a^N b^M$ form another basis for the $\integers^2$ vertex group. We have $a^x b^y = (a^m b^n)^k$ and $a^z b^w = (a^m b^n)^j (a^N b^M)^\ell$, so changing to this basis, the group looks like $G_{(k,0),(j,\ell)}$. 

\begin{center}
\begin{tikzpicture}
\node at (0,0) (3){$\langle \alpha,\beta \rangle$};
\path[every node/.style={font=\sffamily\small}]
(3)   edge[loop] node[above]  {$\langle \alpha^k \rangle$} (3);
\node at (-1,0.75) {$\alpha^k$};
\node at (1,0.75) {$\alpha^j \beta^\ell$};
\end{tikzpicture}
\end{center}

The nonzero intersection number condition now says that $k \ell \ne 0$. 
% It's a fact that $(\alpha,\beta), (\delta,\gamma)$ are a $\integers$-basis for $\integers^2$ if and only if the ``determinant" $\alpha \gamma - \beta \delta = \pm 1$. 

\begin{lemma}
     $G_{(k,0),(j,\ell)}$ with nonzero intersection number is commensurable to $G_{(M,0),(0,1)}$ for some $M$. 
\end{lemma}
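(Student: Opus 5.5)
The plan is to pass to finite index subgroups on both sides until we reach the same graph of groups: one vertex $\integers^2$ with some number $N$ of loops, each loop identifying one basis element of the vertex group with the other. I expect this with $N=|k\ell|$, so $M=|k\ell|$. Throughout I write $G=G_{(k,0),(j,\ell)}$ in the basis of Lemma \ref{basis}, with relation $t\alpha^j\beta^\ell t^{-1}=\alpha^k$. I use the Scott--Wall description (Theorem 3.14 in \cite{ScottWall}) and the subsequent Remark to read off graph of groups structures, exactly as in the example of $G_{(m,0),(0,1)}$ above. Sign changes of $k$, $\ell$ and inverting stable letters give isomorphic groups, so we may assume $k,\ell>0$.

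\textbf{Step 1.} Let $H_1$ be the kernel of $G\to\integers/\ell\integers$ given by $\alpha\mapsto 0$, $\beta\mapsto 1$, $t\mapsto 0$. This is well defined because $\alpha^j\beta^\ell\mapsto \ell\equiv 0$ and $\alpha^k\mapsto 0$.
\begin{itemize}
\item Since the vertex group $A=\langle\alpha,\beta\rangle$ surjects onto $\integers/\ell$, there is a single vertex, with group $H_1\cap A=\langle \alpha,\gamma\rangle$ where $\gamma=\beta^\ell$.
\item The edge group $C=\langle\alpha^j\beta^\ell\rangle$ lies in the kernel, so there are $\ell$ edge double cosets $H_1\beta^iC$ for $0\le i<\ell$, each with edge group $C$.
\item The origin map is the inclusion $\alpha^j\gamma\mapsto\alpha^j\gamma$.
\item For the terminal map, take $h=(\beta^it\beta^{-i})^{-1}\in H_1$, so that $h\beta^i t\beta^{-i}=1$. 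Conjugating by $h$ sends $\alpha^j\gamma$ to $\alpha^k$, since $\beta$ commutes with $\alpha$ and $\gamma$.
\end{itemize}
Now $x=\alpha^j\gamma$ and $y=\alpha$ form a basis of $\langle\alpha,\gamma\rangle$, because $\det\begin{pmatrix} j&1\\ 1&0\end{pmatrix}=-1$. So $H_1$ is one vertex $\langle x,y\rangle$ with $\ell$ loops, each identifying $x$ with $y^k$.

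\textbf{Step 2.} Let $H_2$ be the kernel of $H_1\to\integers/k\integers$ given by $y\mapsto 1$, $x\mapsto 0$, and all $\ell$ stable letters $\mapsto 0$. This is consistent with $x\sim y^k$.
\begin{itemize}
\item There is again one vertex, with group $\langle x,z\rangle$ where $z=y^k$.
\item Each of the $\ell$ loops splits into $k$ loops, indexed by $y^i$, because the edge group $\langle x\rangle$ maps to $0$.
\item By the same conjugation trick as in Step 1, each of these loops identifies $x$ with $z$.
\end{itemize}
So $H_2$ is one vertex $\integers^2=\langle x,z\rangle$ with $k\ell$ loops, each gluing $x\mapsto z$. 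It has finite index $k\ell$ in $G$.

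\textbf{Step 3.} On the other side, the worked example after the Scott--Wall theorem with $m=k\ell$ applies directly. The kernel of $G_{(k\ell,0),(0,1)}\to\integers/k\ell$ has one vertex $\langle a^{k\ell},b\rangle$ and $k\ell$ loops, each gluing $a^{k\ell}$ to $b$. Under $x\leftrightarrow a^{k\ell}$ and $z\leftrightarrow b$ this is the same graph of groups as $H_2$. The orientation of each loop may be reversed, which is fixed by inverting the stable letter. The two groups therefore have isomorphic finite index subgroups, so $G$ is commensurable to $G_{(M,0),(0,1)}$ with $M=|k\ell|$.

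\textbf{Main obstacle.} The delicate part is the bookkeeping in the Scott--Wall description. One must check that the terminal edge maps really are conjugation by elements of the subgroup, which is what the choice $h=(\beta^it\beta^{-i})^{-1}$ arranges. One must also check that the orientations and basis changes match, so that the two graphs of groups are literally isomorphic rather than merely similar. If the orientation conventions of the relation $t a^m b^n t^{-1}=a^k b^\ell$ cause a mismatch, I would replace the target by $G_{(0,1),(M,0)}$. This group is isomorphic to $G_{(M,0),(0,1)}$ via $t\mapsto t^{-1}$, so the conclusion is unaffected.
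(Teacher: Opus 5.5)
Your proof is correct, and it takes a genuinely different route from the paper.

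\textbf{The paper's route.} The paper splits into cases.
\begin{itemize}
\item When $\gcd(k,\ell)=1$, it uses a single map $G_{(k,0),(j,\ell)}\to\integers/k\integers\oplus\integers/\ell\integers$ with $b\mapsto(c,1)$. This requires solving $c\ell\equiv -j$ modulo $k$, i.e.\ dividing by $\ell$.
\item When $\gcd(k,\ell)>1$, it needs separate maps for $j=k$, $|j-k|=1$ and $|j-k|\geq 2$. The last subcase passes through an intermediate subgroup whose loops are then of Case 1 type.
\end{itemize}

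\textbf{Your route.} Your Step 1 sidesteps the divisibility issue. After killing $\beta$ modulo $\ell$, the elements $\alpha^j\beta^\ell$ and $\alpha$ form a basis of the vertex group. So every loop already has the $G_{(k,0),(0,1)}$ shape, whatever $\gcd(k,\ell)$ and $j$ are, and Step 2 is just the paper's worked example. Your second map only has to assign $\beta^\ell\mapsto -j$, never a value to $\beta$ itself, which is why no case analysis is needed. In fact, when $\gcd(k,\ell)=1$ your $H_2$ coincides with the paper's Case 1 kernel.

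\textbf{What each buys.} You gain uniformity and the explicit value $M=|k\ell|$ in every case. The paper's Case 2(c) produces a different $M$, which is harmless only because the next lemma shows all $G_{(M,0),(0,1)}$ are commensurable. The paper's route has the mild advantage that most of its subgroups are visibly kernels of maps from $G$ to finite abelian groups.

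\textbf{A small slip in Step 1.} With the Remark's convention, the edge $gC$ joins $gA$ to $gtA$, so the edge group must satisfy $t^{-1}Ct\subseteq A$. That makes it $\langle\alpha^k\rangle$, not $\langle\alpha^j\beta^\ell\rangle$. Conjugation by your $h=\beta^i t^{-1}\beta^{-i}$ then sends $\alpha^k$ to $\alpha^j\beta^\ell$. As you wrote it, $h\alpha^j\beta^\ell h^{-1}=\beta^i t^{-1}\alpha^j\beta^\ell t\beta^{-i}$, which is not $\alpha^k$. Each loop still identifies $\alpha^k$ with $\alpha^j\beta^\ell$ (only the orientation is reversed), so nothing downstream changes.
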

This takes some casework, which will be structured as:
\begin{enumerate}[label=Case \arabic*:]
    \item $\gcd(k,\ell)=1$
    \begin{enumerate}[(a)]
        \item $|k|=|\ell|=1$
        \item At least one of $|k|,|\ell|$  is greater than 1
    \end{enumerate}
    \item $\gcd(k,\ell)>1$
    \begin{enumerate}[(a)]
        \item $j=k$
        \item $|j-k|=1$
        \item $|j-k|\geq 2$
    \end{enumerate}
\end{enumerate}
In every case, we'll take surjective maps from the tubular group to a finite abelian group to get a finite index subgroup of the tubular group as kernel of the map. Taking the codomain group to be abelian just makes things easier, since the images of the vertex group generators will automatically commute. 

\begin{proof} \underline{For Case 1, we're assuming that $\gcd(k,\ell)=1$.}

\begin{enumerate}[(a)]
    \item We first consider the sub-case where $|k|=|\ell|=1$. This is actually quite simple, because $G_{(1,0),(j,1)}$ is isomorphic to $G_{(1,0),(0,1)}$ after a change of basis in the vertex group. 

\begin{center}
 \begin{tikzpicture}
     \node at (0,0) (1){$\langle a,b \rangle$};
     \node at (-1.5,0) (2){$\langle a^{\pm k},a^j b^{\pm \ell} \rangle=$};
\path[every node/.style={font=\sffamily\small}]
(1)   edge[in=22.5,out=-22.5, loop, distance=3.5cm] node[right]  {$\langle a\rangle$} (1);
\node at (2,1) (dots) {$a$};
\node at (2,-1) (dots) {$a^j b$};
\node at (1.5,-1.5) (label) {$G_{(1,0),(j,1)}$};

\node at (5.5,0) (sim) {$\simeq$};

     \node at (7,0) (1){$\langle \alpha,\beta \rangle$};
\path[every node/.style={font=\sffamily\small}]
(1)   edge[in=22.5,out=-22.5, loop, distance=3.5cm] node[right]  {$\langle \alpha\rangle$} (1);
\node at (9,1) (dots) {$\alpha$};
\node at (9,-1) (dots) {$\beta$};
\node at (9,-1.5) (label) {$G_{(1,0),(0,1)}$};

 \end{tikzpicture}
 \end{center}

\item Now suppose at least one of $|k|,|\ell|$ is strictly greater than $1$. Let $G_{(k,0),(j,\ell)} \twoheadrightarrow \integers/k\integers \oplus \integers/\ell\integers$ sending $a \mapsto (1,0), b \mapsto (c,1), t \mapsto (0,0)$ where $c$ satisfies $c\ell \equiv -j \mod k$, so that $j+c\ell \equiv 0 \mod k$ and the map is well-defined. Such a $c$ exists because $\gcd(k,\ell)=1$, so we're able to divide by $\ell$. If $k=1$, just take the map to $\integers/\ell \integers$, and similarly if $\ell=1$, just take the map to $\integers/k\integers$. What follows will work for them too. 

The kernel $K$ has graph of groups structure with one vertex (the original vertex group surjects) and $|k\ell|$ many loops (the original edge group maps to the trivial group, which has index $|k\ell|$). The edge groups and edge maps for all the loops are identical ($K$ is normal), with the edge group generator $a^k$ being sent to $a^k$ and $a^j b^\ell$.

%\tikzset{every loop/.style={min distance=30mm}}
\begin{tikzpicture}
\node at (0,0) (1){$\langle a^k,a^j b^\ell \rangle$};
\path[every node/.style={font=\sffamily\small}]
(1)   edge[in=45,out=15, loop, distance=4.5cm] node[right]  {$\langle a^k \rangle$} (1);
\path (1)   edge[in=-45,out=-15, loop, distance=4.5cm] node[right]  {} (1);
\node at (3,0) (dots) {$\vdots \hspace{1cm} |k\ell|$  many loops};
\node at (2,2) (dots) {$a^k$};
\node at (3,0.8) (dots) {$a^j b^\ell$};
\node at (2,-3) (label) {Index $|k\ell|$ subgroup K of $G_{(k,0),(j,\ell)}$};

\node at (6,0) (sim) {$\simeq$};

\node at (8,0) (1){$\langle \alpha,\beta \rangle$};
\path[every node/.style={font=\sffamily\small}]
(1)   edge[in=45,out=15, loop, distance=4.5cm] node[right]  {$\langle \alpha \rangle$} (1);
\path (1)   edge[in=-45,out=-15, loop, distance=4.5cm] node[right]  {} (1);
\node at (11,0) (dots) {$\vdots \hspace{1cm} |k\ell|$  many loops};
\node at (10,2) (dots) {$\alpha$};
\node at (11,0.8) (dots) {$\beta$};
\end{tikzpicture}

 Letting $\alpha = a^k, \beta = a^j b^\ell$, we notice that $K$'s graph of groups is the same as that of a subgroup of $G_{(k\ell,0),(0,1)}$. Namely, the kernel $H$ of the map $G_{(k\ell,0),(0,1)} \twoheadrightarrow \integers/k\ell \integers$ sending $a \mapsto 1, b \mapsto 0, t \mapsto 0$ has a graph of groups that looks the same, after renaming vertex group generators. 

 \begin{tikzpicture}
     \node at (0,0) (1){$\langle a,b \rangle$};
\path[every node/.style={font=\sffamily\small}]
(1)   edge[in=22.5,out=-22.5, loop, distance=3.5cm] node[right]  {$\langle a^{k\ell} \rangle$} (1);
\node at (2.2,1) (dots) {$a^{k\ell}$};
\node at (2.2,-0.9) (dots) {$b$};
\node at (2,-2) (label) {$G_{(k\ell,0),(0,1)}$};

\node at (6,0) (arrow) {$\longleftarrow$};

\node at (8,0) (1){$\langle a^{k\ell},b \rangle$};
\path[every node/.style={font=\sffamily\small}]
(1)   edge[in=45,out=15, loop, distance=3.5cm] node[right]  {$\langle a^{k\ell} \rangle$} (1);
\path (1)   edge[in=-45,out=-15, loop, distance=3.5cm] node[right]  {} (1);
\node at (11,0) (dots) {$\vdots \hspace{1cm} k\ell$  many loops};
\node at (11,-2) (label) {Index $k\ell$ subgroup $H$ of $G_{(k\ell,0),(0,1)}$};

\node at (10,2) (dots) {$a^{k\ell}$};
\node at (10.8,0.8) (dots) {$b$};
 \end{tikzpicture}

Therefore $G_{(k,0),(j,\ell)} \sim G_{(k\ell,0),(0,1)}$ in Case 1. 
\end{enumerate}

\bigskip \underline{For Case 2, we're assuming that $\gcd(k,\ell)>1$.}

\begin{enumerate}[(a)]
    \item Suppose that $j=k$. We proceed similarly to in Case 1 by considering $G_{(k,0),(k,\ell)} \twoheadrightarrow \integers/k\integers \oplus \integers/\ell\integers$ sending $a \mapsto (1,0), b \mapsto (0,1), t \mapsto (0,0)$. The kernel of this map has graph of groups structure with one vertex and $|k\ell|$ many loops. The vertex group is $\langle a^k, b^\ell \rangle$, but we can change the basis to  $\langle a^k, a^k b^\ell \rangle$. 
    
\begin{center}
\begin{tikzpicture}
\node at (0,0) (1){$\langle  a^k, a^k b^\ell \rangle$};
\path[every node/.style={font=\sffamily\small}]
(1)   edge[in=45,out=15, loop, distance=2.5cm] node[right]  {$\langle a^k \rangle$} (1);
\path (1)   edge[in=-45,out=-15, loop, distance=2.5cm] node[right]  {} (1);
\node at (3,0) (dots) {$\vdots \hspace{1cm} |k\ell|$  many loops};
\node at (3,0) (dots) {$\vdots \hspace{1cm} |k\ell|$  many loops};
\node at (1.5,1.5) (dots) {$a^k$};
\node at (2.2,0.5) (dots) {$a^k b^\ell$};
\node at (2,-2) (label) {Index $|k\ell|$ subgroup of $G_{(k,0),(j,\ell)}$};

\node at (5,0) (sim) {$\simeq$};

\node at (7,0) (1){$\langle  \alpha, \beta \rangle$};
\path[every node/.style={font=\sffamily\small}]
(1)   edge[in=45,out=15, loop, distance=2.5cm] node[right]  {$\langle \alpha \rangle$} (1);
\path (1)   edge[in=-45,out=-15, loop, distance=2.5cm] node[right]  {} (1);
\node at (10,0) (dots) {$\vdots \hspace{1cm} |k\ell|$  many loops};
\node at (10,0) (dots) {$\vdots \hspace{1cm} |k\ell|$  many loops};
\node at (8.5,1.5) (dots) {$\alpha$};
\node at (9.2,0.5) (dots) {$\beta$};
\node at (9,-2) (label) {Index $|k\ell|$ subgroup $H$ of $G_{(k\ell,0),(0,1)}$};
\end{tikzpicture}
\end{center}

Letting $\alpha=a^k, \beta = a^k b^\ell$, we recognize the same  subgroup $H$ of $G_{(k\ell,0),(0,1)}$ as in Case 1. So we get the commensurability we want. 

\item Suppose that $|j-k|=1$. Consider the map $G_{(k,0),(j,\ell)} \twoheadrightarrow \integers/k\ell\integers$ sending $a \mapsto \ell, b \mapsto k-j = \pm 1$ (in this sub-case, so the map does surject) and $t\mapsto 0$. The kernel of this map will have the graph of groups structure depicted below (the left side picture is for $j=k+1$ and the right side picture is for $j=k-1$. They both have $k\ell$ many loops). 

\begin{tikzpicture}
\node at (0,0) (1)[left]{$\langle  a^k, a b^\ell \rangle$};
\node at (-1.3,0) (2)[left]{$\langle a^k, a^j b^\ell\rangle$=};
\path[every node/.style={font=\sffamily\small}]
(1)   edge[in=45,out=15, loop, distance=2.5cm] node[right]  {$\langle a^k \rangle$} (1);
\path (1)   edge[in=-45,out=-15, loop, distance=2.5cm] node[right]  {} (1);
\node at (3,0) (dots) {$\vdots \hspace{0.5cm}  \: k\ell$  many loops};
\node at (1,1.5) (dots) {$a^k$};
\node at (1.4,0.5) (dots) {$a^j b^\ell$};
\node at (1,-2) (label) {For $j=k+1$ (so $k-j=-1$)};

\node at (9,0) (1)[left]{$\langle  a^k, a^{-1} b^\ell \rangle$};
\node at (7.3,0) (2)[left]{$\langle a^k, a^j b^\ell\rangle$=};
\path[every node/.style={font=\sffamily\small}]
(1)   edge[in=45,out=15, loop, distance=2.5cm] node[right]  {$\langle a^k \rangle$} (1);
\path (1)   edge[in=-45,out=-15, loop, distance=2.5cm] node[right]  {} (1);
\node at (10,1.5) (dots) {$a^k$};
\node at (10.4,0.5) (dots) {$a^j b^\ell$};
\node at (8,-2) (label) {For $j=k-1$ (so $k-j = 1$)};
\node at (11,0) (dots) {$\vdots$};
\end{tikzpicture}
In either case, we are able to modify the vertex group basis (since $j = k\pm 1$) so that renaming $\alpha = a^k, \beta = a^j b^\ell$, we recognize the same (index $k\ell$) subgroup $H \leq G_{(k\ell,0),(0,1)}$ as before. So we get the commensurability we want. 

\item Now suppose that $|j-k|\geq 2$. Consider the map $G_{(k,0),(j,\ell)} \twoheadrightarrow \integers/|j-k|\integers \: \oplus \integers/\ell\integers$ sending $a \mapsto (1,0), b\mapsto (0,1)$ and $t \mapsto (0,0)$. Since $k\equiv j \mod|j-k|$, this map is well-defined. The kernel of this map has a graph of groups structure with one vertex and $|\ell| \gcd(k,|j-k|)$ many loops (that is the index of $\langle (k,0)\rangle$ in the codomain). The loops have identical edge groups, generated by $a^{\lcm(k,|j-k|)} = (a^k)^{\frac{|j-k|}{\gcd(k,|j-k|)}}$ and edge maps are inclusion and $(a^k)^{\frac{|j-k|}{\gcd(k,|j-k|)}} \mapsto (a^j b^\ell)^{\frac{|j-k|}{\gcd(k,|j-k|)}}$. We express these in terms of the vertex group generators, depicted below. 

\begin{tikzpicture}
\node at (0,0) (1)[left]{$\langle  a^{|j-k|}, b^\ell \rangle$};
\path[every node/.style={font=\sffamily\small}]
(1)   edge[in=45,out=15, loop, distance=2.5cm] node[right]  {$\langle (a^{|j-k|})^{\frac{k}{\gcd(k,|j-k|)}} \rangle$} (1);
\path (1)   edge[in=-45,out=-15, loop, distance=2.5cm] node[right]  {} (1);
\node at (2,-0.5) (dots) {$\vdots$};
\node at (0.2,1.5) (out) {$(a^{|j-k|})^{\frac{k}{\gcd(k,|j-k|)}} $};
\node at (3.3,0.5) (in) {$(a^{|j-k|})^{\frac{j}{\gcd(k,|j-k|)}} (b^\ell)^\frac{|j-k|}{\gcd(k,|j-k|)}$};
\node at (1,-2) (label) {Index $\ell |j-k|$ subgroup $H$ of $G_{(k,0),(j,\ell)}$};

\node at (5,-0.2) (sim) {$\simeq$};

\node at (8,0) (1)[left]{$\langle  \alpha, \beta \rangle$};
\path[every node/.style={font=\sffamily\small}]
(1)   edge[in=45,out=15, loop, distance=2.5cm] node[right]  {$\langle \alpha^{\frac{k}{\gcd(k,|j-k|)}} \rangle$} (1);
\path (1)   edge[in=-45,out=-15, loop, distance=2.5cm] node[right]  {} (1);
\node at (10,-0.5) (dots) {$\vdots$};
\node at (8.2,1.5) (out) {$\alpha^{\frac{k}{\gcd(k,|j-k|)}} $};
\node at (11,0.5) (in) {$\alpha^{\frac{j}{\gcd(k,|j-k|)}} \beta^\frac{|j-k|}{\gcd(k,|j-k|)}$};
\node at (9,-2) (label) {After renaming vertex group generators};

\end{tikzpicture}

Notably now we have that $\gcd(\frac{k}{\gcd(k,|j-k|)}, \frac{|j-k|}{\gcd(k,|j-k|)})=1$. For clarity, let $m=\frac{k}{\gcd(k,|j-k|)}, p=\frac{|j-k|}{\gcd(k,|j-k|)}$ and $n=\frac{j}{\gcd(k,|j-k|)}$. Observe that $n$ is actually an integer, because $j$ is in fact divisible by $\gcd(k, |j-k|)$. 

\begin{center}
\begin{tikzpicture}
\node at (0,0) (1)[left]{$\langle  \alpha, \beta \rangle$};
\path[every node/.style={font=\sffamily\small}]
(1)   edge[in=45,out=15, loop, distance=2.5cm] node[right]  {$\langle \alpha^m \rangle$} (1);
\path (1)   edge[in=-45,out=-15, loop, distance=2.5cm] node[right]  {} (1);
\node at (5,0) (dots) {$\vdots \hspace{1cm} |\ell| \gcd(k,|j-k|)$ many  loops};
\node at (0.5,1.2) (out) {$\alpha^m$};
\node at (1.5,0.5) (in) {$\alpha^n \beta^p$};
\node at (1,-2) (label) {Still that finite index  subgroup $H$ of $G_{(k,0),(j,\ell)}$};
\end{tikzpicture}
\end{center}

Now note that  each loop  is in the situation of Case 1, because $\gcd(m,p)=1$. As such, we can do what we did in Case 1. Consider a map $H \twoheadrightarrow \integers/m\integers \oplus \integers/p\integers$ sending all of the HNN letters for the loops to zero, $\alpha \mapsto (1,0)$ and $\beta \mapsto (c,1)$ where $n+cp \equiv 0 \mod m$, so that the map is well-defined. Again, $c$ exists because we are able to divide by $p$ on both sides of $c p \equiv -n \mod m$, now that $\gcd(m,p)=1$. This produces a finite index subgroup $H'$ of $H$. It has graph of groups structure with one vertex and  $|\ell| \gcd(k,|j-k|)|mp|$ many loops (since each loop in $H$ gives rise to $|mp|$ many loops in $H'$). 

\begin{tikzpicture}
\node at (0,0) (1)[left]{$\langle  \alpha^m, \alpha^n \beta^p \rangle$};
\path[every node/.style={font=\sffamily\small}]
(1)   edge[in=45,out=15, loop, distance=2.5cm] node[right]  {$\langle \alpha^m \rangle$} (1);
\path (1)   edge[in=-45,out=-15, loop, distance=2.5cm] node[right]  {} (1);
\node at (2,0) (dots) {$\vdots$};
\node at (0.6,1.4) (out) {$\alpha^m$};
\node at (1.5,0.5) (in) {$\alpha^n \beta^p$};
\node at (1,-2) (label) {Finite index  subgroup $H'$ of $H$};
\node at (3.5,0) (sim) {$\simeq$};

\node at (7,0) (1)[left]{$\langle  \delta,\gamma \rangle$};
\path[every node/.style={font=\sffamily\small}]
(1)   edge[in=45,out=15, loop, distance=2.5cm] node[right]  {$\langle \delta \rangle$} (1);
\path (1)   edge[in=-45,out=-15, loop, distance=2.5cm] node[right]  {} (1);
\node at (10.5,0) (dots) {$\vdots$ \hspace{0.5cm} $|\ell| \gcd(k,|j-k|)|mp|$};
 \node at (11,-0.6) (dots) {many loops};
\node at (7.6,1.4) (out) {$\delta$};
\node at (8.5,0.5) (in) {$\gamma$};
\node at (7,-2) (label) {Renaming $\delta =\alpha^m, \gamma =\alpha^n \beta^p$};
\end{tikzpicture}

Renaming the vertex generators, we notice that the right hand side picture is the same as that of the index $M=|\ell| \gcd(k,|j-k|)|mp|$ subgroup of $G_{(M,0),(0,1)}$, obtained as the kernel of the map $G_{(M,0),(0,1)} \twoheadrightarrow \integers/M\integers$ sending $a \mapsto 1, b\mapsto 0, t\mapsto 0$. Since $H'$ was finite index in $H$, which was finite index in $G_{(k,0),(j,\ell)}$, this shows that $G_{(k,0),(j,\ell)}$ is commensurable to $G_{(M,0),(0,1)}$. 
\end{enumerate}

We are now done with all the cases. 
\end{proof}

\begin{lemma}
    $G_{(1,0),(0,1)}$ is commensurable to $G_{(1,0),(1,M)}$ for all $M\geq 1$, which is in turn commensurable to $G_{(M,0),(0,1)}$. From this, we get that  the one vertex one loop tubular groups with nonzero intersection number are all commensurable to each other. 
\end{lemma}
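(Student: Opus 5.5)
The plan is to prove the two commensurabilities separately and then chain them with the previous two lemmas. Recall the convention $t a^m b^n t^{-1} = a^k b^\ell$ for $G_{(k,\ell),(m,n)}$. So $G_{(1,0),(1,M)}$ is the one vertex one loop group whose edge group $\langle a\rangle$ is sent to $a$ and to $a b^M$.

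\emph{Second commensurability.} Write $G_{(1,0),(1,M)}$ in the form $G_{(k,0),(j,\ell)}$ of Lemma \ref{basis} with $k=1$, $j=1$, $\ell=M$. Then $\gcd(k,\ell)=1$, so we are in Case 1 of the previous lemma: Case 1(a) if $M=1$, and Case 1(b) otherwise. That case gives commensurability with $G_{(k\ell,0),(0,1)}=G_{(M,0),(0,1)}$. Concretely, the witnessing subgroup is the kernel of $G_{(1,0),(1,M)}\twoheadrightarrow \integers/M\integers$ with $a\mapsto 0$, $b\mapsto 1$, $t\mapsto 0$. This is the Case 1(b) map with $c$ arbitrary, since $k=1$ collapses the first factor. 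The map is well defined because $a b^M\mapsto 0$.

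\emph{First commensurability.} I would exhibit an index $M$ subgroup of $G_{(1,0),(0,1)}$ that is itself isomorphic to $G_{(1,0),(1,M)}$. Here the relation is $t b t^{-1} = a$. Take $\varphi: G_{(1,0),(0,1)}\twoheadrightarrow \integers/M\integers$ with $a\mapsto 1$, $b\mapsto 1$, $t\mapsto 0$; this is well defined since $a$ and $b$ have the same image. Let $H=\ker\varphi$ and apply Theorem 3.14 of \cite{ScottWall}.
\begin{itemize}
\item The vertex group $A=\langle a,b\rangle$ surjects onto $\integers/M\integers$, so there is a single vertex, with group $H\cap A=\{a^x b^y : x+y\equiv 0 \bmod M\}$.
\item The edge group $C=\langle a\rangle$ also surjects, so there is a single edge, with group $H\cap C=\langle a^M\rangle$.
\item The edge maps send the generator to $a^M$ and, after conjugating by $t^{-1}\in H$, to $b^M$.
\end{itemize}
Now choose the basis $u=a b^{-1}$, $v=a^M$ for $H\cap A$. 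The determinant check $\det\begin{pmatrix}1&-1\\ M&0\end{pmatrix}=M$ equals the index, so this is a genuine basis. In this basis $b^M = v u^{-M}$. Replacing $u$ by $u^{-1}$ shows that $H$ is a one vertex one loop tubular group with edge images $v$ and $v u^{M}$, i.e. $H\cong G_{(1,0),(1,M)}$. The only step needing care is this change of basis, namely confirming that $v$ is primitive and that the second edge image has the stated coordinates. The determinant check handles both.

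\emph{Conclusion.} Combining the two parts, $G_{(1,0),(0,1)}\sim G_{(1,0),(1,M)}\sim G_{(M,0),(0,1)}$ for every $M\geq 1$. By Lemma \ref{basis} and the previous lemma, every nonzero intersection number group is commensurable to some $G_{(M,0),(0,1)}$, with $M\ge 1$ after a sign change of generators if needed. Since commensurability is an equivalence relation, all such groups are commensurable to $G_{(1,0),(0,1)}$ and hence to each other.
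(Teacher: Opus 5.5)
Your proposal is correct and follows the paper's proof essentially step for step. For the first commensurability you use the same map $a,b\mapsto 1$, $t\mapsto 0$ onto $\integers/M\integers$ and the same basis $a^M, a^{-1}b$ of the vertex group (your $v$ and $u^{-1}$); for the second you use the same map $a\mapsto 0$, $b\mapsto 1$, which is the $k=1$ instance of Case 1. Your determinant check that $\{ab^{-1},a^M\}$ is a basis of $H\cap A$, and your remark on the sign of $M$, are small details the paper leaves implicit.
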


\begin{proof} 
We have already handled the case where $M=1$, as part of Case 1a), so let $M\geq 2$. Consider the map $G_{(1,0),(0,1)} \twoheadrightarrow \integers/M\integers$ sending $a \mapsto 1, b \mapsto 1, t \mapsto 0$. The kernel of this map has graph of groups structure with one vertex and one loop (because the image of the original edge group maps is already all of $\integers/M\integers$). 

\begin{center}
 \begin{tikzpicture}
     \node at (0,0) (1){$\langle a^M,a^{-1}b \rangle$};
\path[every node/.style={font=\sffamily\small}]
(1)   edge[in=22.5,out=-22.5, loop, distance=4.5cm] node[right]  {$\langle a^M \rangle$} (1);
\node at (2.5,1) (dots) {$a^M$};
\node at (2.5,-1) (dots) {$b^M = (a^M)^1 (a^{-1}b)^M$};
\node at (2,-2) (label) {Index $M$ subgroup of $G_{(1,0),(0,1)}$};

\node at (5.5,0) (sim) {$\simeq$};

     \node at (7,0) (1){$\langle \alpha,\beta \rangle$};
\path[every node/.style={font=\sffamily\small}]
(1)   edge[in=22.5,out=-22.5, loop, distance=4.5cm] node[right]  {$\langle \alpha\rangle$} (1);
\node at (9.5,1) (dots) {$\alpha$};
\node at (9.5,-1) (dots) {$\alpha \beta^M$};
\node at (9,-2) (label) {$G_{(1,0),(1,M)}$};
 \end{tikzpicture}
 \end{center}

 Letting $\alpha=a^M, \beta=a^{-1}b$, we see that $G_{(1,0),(0,1)}$ is commensurable to $G_{(1,0),(1,M)}$.

For the second part of the Lemma, consider the map $G_{(1,0),(1,M)} \twoheadrightarrow \integers/M \integers$ sending $a \mapsto 0, b \mapsto 1, t \mapsto 0$. The kernel of this map has graph of groups structure with one vertex and $k$ loops as depicted below. 

\begin{tikzpicture}
\node at (0,0) (1)[left]{$\langle  a, b^M \rangle$};
\node at (-2.2,0) (2){$\langle a,ab^M \rangle =$};
\path[every node/.style={font=\sffamily\small}]
(1)   edge[in=45,out=15, loop, distance=2.5cm] node[right]  {$\langle a \rangle$} (1);
\path (1)   edge[in=-45,out=-15, loop, distance=2.5cm] node[right]  {} (1);
\node at (2,0) (dots) {$\vdots \hspace{0.5cm} M$ loops};
\node at (0.6,1.4) (out) {$a$};
\node at (1.5,0.5) (in) {$a b^M$};
\node at (1,-2) (label) {Index $M$ subgroup of $G_{(1,0),(1,M)}$};
\node at (4,0) (sim) {$\simeq$};

\node at (7,0) (1)[left]{$\langle  \alpha,\beta \rangle$};
\path[every node/.style={font=\sffamily\small}]
(1)   edge[in=45,out=15, loop, distance=2.5cm] node[right]  {$\langle \alpha \rangle$} (1);
\path (1)   edge[in=-45,out=-15, loop, distance=2.5cm] node[right]  {} (1);
\node at (9,0) (dots) {$\vdots$ \hspace{0.5cm} $M$ loops};
\node at (7.6,1.4) (out) {$\alpha$};
\node at (8.5,0.5) (in) {$\beta$};
\end{tikzpicture}

Renaming the edge generators, we recognize this as an index $M$ subgroup of $G_{(M,0),(0,1)}$. Thus $G_{(1,0),(1,\ell)}$ is commensurable to $G_{(M,0),(0,1)}$. 

At this point, by Lemma \ref{basis} every linearly  $G_{(x,y),(z,w)}$ with nonzero intersection number is isomorphic to some $G_{(k,0),(j,\ell)}$, which in turn is commensurable to $G_{(M,0),(0,1)}$ for some $M\geq 1$.  We just saw that those are all commensurable to $G_{(1,0),(0,1)}$. Therefore the  $G_{(x,y),(z,w)}$ with nonzero intersection number are all commensurable to each other. 
\end{proof}

\subsection{Zero intersection number}

In the same way as before, we  first reduce to working with groups of the form $G_{(m,0),(n,0)}$. 

\begin{lemma}
A  tubular group $G_{(k,\ell),(m,n)}$ with $kn-\ell m=0$ is isomorphic to $G_{(\gcd(k,\ell),0),(\pm \gcd(m,n),0)}$. 
\end{lemma}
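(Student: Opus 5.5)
The approach mirrors Lemma \ref{basis}: a change of basis in the $\integers^2$ vertex group that sends both edge-map images onto the first coordinate axis. Set $d_1=\gcd(k,\ell)$ and $d_2=\gcd(m,n)$; both are nonzero because $(k,\ell),(m,n)\ne(0,0)$. Write $(k,\ell)=d_1(p,q)$ with $\gcd(p,q)=1$.

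The first step is to show that $(m,n)$ is an integer multiple of the same primitive vector $(p,q)$. The condition $kn-\ell m=0$ gives $d_1(pn-qm)=0$, so $pn=qm$. Since $\gcd(p,q)=1$, $p$ divides $qm$ and hence $p\mid m$; similarly $q\mid n$. It follows that $(m,n)=s(p,q)$ for some integer $s$. When $p=0$ or $q=0$ this is read directly: for example, $p=0$ forces $q=\pm1$ and then $m=0$. Because $(p,q)$ is primitive, $\gcd(m,n)=|s|$, so $s=\pm d_2$.

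The second step is the basis change. By Bezout choose $M,N$ with $pM-qN=1$, and set $\alpha=a^pb^q$ and $\beta=a^Nb^M$. The matrix $\begin{pmatrix}p&q\\N&M\end{pmatrix}$ has determinant $1$, so $\{\alpha,\beta\}$ is a basis of $\langle a,b\rangle\cong\integers^2$ and the substitution is an automorphism of the vertex group. In the new basis $a^kb^\ell=\alpha^{d_1}$ and $a^mb^n=\alpha^{s}=\alpha^{\pm d_2}$.

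The third step is to rewrite the presentation. Replacing $a,b$ by $\alpha,\beta$ through this automorphism and keeping $t$, the presentation of $G_{(k,\ell),(m,n)}$ becomes $\langle \alpha,\beta,t : [\alpha,\beta]=1,\ t\alpha^{\pm d_2}t^{-1}=\alpha^{d_1}\rangle$. This is the presentation $G_{(d_1,0),(\pm d_2,0)}$, which is the claimed isomorphism.

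If one wants to remove the sign, as Theorem \ref{thm} suggests, there are two further moves. Inverting $\beta$ does nothing useful, since it does not affect $\alpha$. Instead, swap $t$ and $t^{-1}$ and apply $\alpha\mapsto\alpha^{-1}$; together these let one normalize the signs and also exchange the roles of the two edge maps. This normalization is not needed for the lemma as stated.

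There is no real obstacle here. The only step needing care is the divisibility argument showing that $(m,n)$ is a multiple of the primitive vector $(p,q)$, including the degenerate cases where a coordinate vanishes. Everything else is the same Bezout basis change already used in Lemma \ref{basis}.
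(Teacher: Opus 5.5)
Your proof is correct and takes the same route as the paper: you exhibit a primitive vector $(p,q)$ of which both $(k,\ell)$ and $(m,n)$ are integer multiples, complete $a^pb^q$ to a basis via Bezout, and read off $G_{(d_1,0),(\pm d_2,0)}$; your divisibility argument from $pn=qm$ and $\gcd(p,q)=1$ is tidier than the paper's detour through the reduced fraction $\frac{p}{q}$. Your closing aside overreaches, though: swapping $t\leftrightarrow t^{-1}$ and inverting $\alpha$ only swap the two entries or flip both signs at once, so the sign of $d_1 s$ never changes and these moves cannot remove the minus sign. In fact that sign is an isomorphism invariant, since the modular homomorphism has image $\langle d_1/s\rangle$ rather than $\langle -d_1/s\rangle$; this is why the $\pm$ in the lemma is genuinely needed, and the relative sign can only be removed up to commensurability (the paper's unsigned phrasing in Theorem \ref{thm} has the same slip).
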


\begin{proof}
In the  case where $kn-\ell m=0$, we have that the indices are (nonzero) rational multiples of each other, i.e. $(m,n)=\frac{p}{q}(k,\ell)$ for $p,q\in \integers$ where $\frac{p}{q}$ is in lowest terms. So we have $(m,n)=\frac{p}{q}\gcd(k,\ell) (\frac{k}{\gcd(k,\ell)}, \frac{\ell}{\gcd(k,\ell)})$ where now $\frac{k}{\gcd(k,\ell)}$ and $\frac{k}{\gcd(k,\ell)}$ are relatively prime. 
Then $q \mid pk$ and $q\mid p\ell$ and $\gcd(p,q)=1$ which means $q\mid k$ and $q\mid \ell$, so $\gcd(k,\ell)=qr$ for some $r\in \integers$. We get that $(m,n)=pr (\frac{k}{\gcd(k,\ell)}, \frac{\ell}{\gcd(k,\ell)})$. But then note that $pr$ must equal $\gcd(m,n)$ up to sign. (We will see shortly that we can invert signs and get isomorphic groups, so the specific sign doesn't matter here,) Complete $a^{\frac{k}{\gcd(k,\ell)}}b^{\frac{\ell}{\gcd(k,\ell)}}$ to a $\integers^2$-basis $\{a^{\frac{k}{\gcd(k,\ell)}}b^{\frac{\ell}{\gcd(k,\ell)}}, a^M b^N\}$ in the same way as in the nonzero intersection number case. With respect to this new basis, we have $a^k b^\ell = (a^{\frac{k}{\gcd(k,\ell)}}b^{\frac{\ell}{\gcd(k,\ell)}})^{\gcd(k,\ell)}$ and $a^m b^n = (a^{\frac{k}{\gcd(k,\ell)}}b^{\frac{\ell}{\gcd(k,\ell)}})^{\gcd(m,n)}$. Thus $G_{(k,\ell),(m,n)} \simeq G_{(\gcd(k,\ell),0),(\pm \gcd(m,n),0)}$. 
     
\end{proof}

%\begin{itemize}
%    \item First of all, we can change basis and reduce to $G_{(m,0),(n,0)}$
%    \item Can show that $G_{(m,0),(n,0)}$ (WLOG for $\gcd(m,n)=1$) is commensurable to $G_{(km,0),(kn,0)}$ (for any $k\ne 0$). So we can focus on determining which $G_{(m,0),(n,0)}$ are commensurable, for $\gcd(m,n)=1$. 
%    \item Suspect that if $\frac{m}{n} \ne \frac{m'}{n'}$, then $G_{(m,0),(n,0)}$ won't be commensurable to $G_{(m',0),(n',0)}$, but I'm not sure yet
%\end{itemize}

$G_{(m,0),(n,0)}$ can be written as a graph of groups in different ways. We have the ``tubular group" graph of groups, with one $\integers^2$ vertex and one loop.  We could also decompose it as having one $BS(m,n)$ vertex and one loop. Moreover, $G_{(m,0),(n,0)}$ is actually a GBS group.

\begin{tikzpicture}
\node at (0,0) (1)[left]{$\langle  a, b \rangle$};
\node at (-0.9,0.05) (2)[left]{$\integers^2=$};
\path[every node/.style={font=\sffamily\small}]
(1)   edge[in=45,out=-45, loop, distance=2cm] node[right]  {$\langle a^m\rangle$} (1);
\node at (0.5,1) (in) {$a^m$};
\node at (0.5,-1) (out) {$a^n$};
\node at (0,-2) (label) {(Here the HNN letter is $t$)};

\node at (5,0) (3)[left]{$\langle  a, t \rangle$};
\node at (4.2,0) (4)[left]{$BS(m,n)=$};
\path[every node/.style={font=\sffamily\small}]
(3)   edge[in=45,out=-45, loop, distance=2cm] node[right]  {$\langle a^m\rangle$} (3);
\node at (5.5,1) (in) {$a$};
\node at (5.5,-1) (out) {$a$};
\node at (4.5,-2) (label) {(Here the HNN letter is $b$)};

\node at (10,0) (5)[left]{$\langle  a\rangle$};
\path[every node/.style={font=\sffamily\small}]
(5)   edge[in=45,out=-45, loop, distance=2cm] node[right]  {$\langle a^m\rangle$} (5);
\path[every node/.style={font=\sffamily\small}]
(5)   edge[in=135,out=-135, loop, distance=2cm] node[left]  {$\langle a\rangle$} (5);
\node at (11,1) (out) {$a^m$};
\node at (11,-1) (in) {$a^n$};
\node at (8.5,1) (out,l) {$a$};
\node at (8.5,-1) (in,l) {$a$};
\node at (9.5,-2)(label) {(HNN letters $b$ and $t$)};
\end{tikzpicture}

As in, $G_{(m,0),(n,0)}$ can be described by just a GBS labelled graph.

\begin{center}
\begin{tikzpicture}
    \node at (0,0) (5)[left]{};
    \draw[fill=black] (-0.1,0) circle (1.5pt);
\path[every node/.style={font=\sffamily\small}]
(5)   edge[in=45,out=-45, loop, distance=2.5cm] node[right]  {} (5);
\path[every node/.style={font=\sffamily\small}]
(5)   edge[in=135,out=-135, loop, distance=2.5cm] node[left]  {} (5);
\node at (1,1) (out) {$m$};
\node at (1,-1) (in) {$n$};
\node at (-1.5,1) (out,l) {$1$};
\node at (-1.5,-1) (in,l) {$1$};
\end{tikzpicture}
\end{center}

We can make a few more reductions by noticing that some of the $G_{(m,0),(n,0)}$'s are already commensurable. 
\begin{itemize}
    \item We always have $G_{(m,0),(n,0)}$ isomorphic to $G_{(n,0),(m,0)}$ (send the HNN letter of one to the inverse of the HNN letter of the other)
    \item We also have $G_{(m,0),(n,0)}$ isomorphic to $G_{(-m,0),(-n,0)}$ (if the relation is $ta^n t^{-1}=a^m$ in the first group and $s \widetilde{a}^{-n} s^{-1} = \widetilde{a}^{-m}$ in the second group, send $a$ to $\widetilde{a}^{-1}$)
    \item Furthermore, $G_{(m,0),(n,0)}$ is commensurable to $G_{(-m,0),(n,0)}$, as shown in the picture below.
\begin{center}
    \begin{tikzpicture}
    \node at (0,0) (1){};
    \draw[fill=black] (0,0) circle (1.5pt);
\path[every node/.style={font=\sffamily\small}]
(1)   edge[in=45,out=-45, loop, distance=2.5cm] node[right]  {} (1);
\path[every node/.style={font=\sffamily\small}]
(1)   edge[in=135,out=-135, loop, distance=2.5cm] node[left]  {} (1);
\node at (1,1) (out) {$-m$};
\node at (1,-1) (in) {$n$};
\node at (-1.25,1) (out,l) {$1$};
\node at (-1.25,-1) (in,l) {$1$};

\node at (0,1.5)(A){$\downarrow \: 2:1$};

    \node at (0,4) (2){};
    \draw[fill=black] (0,4) circle (1.5pt);
        \node at (0,6) (3){};
    \draw[fill=black] (0,6) circle (1.5pt);
\path[every node/.style={font=\sffamily\small}]
(2)   edge[in=-45,out=45,  distance=1cm] node[right]  {} (3);
\path[every node/.style={font=\sffamily\small}]
(2)   edge[in=-135,out=135, distance=1cm] node[left]  {} (3);
\path[every node/.style={font=\sffamily\small}]
(2)   edge[in=-135,out=-45, loop, distance=2.5cm] node[right]  {} (2);
\path[every node/.style={font=\sffamily\small}]
(3)   edge[in=135,out=45, loop, distance=2.5cm] node[right]  {} (3);
\node at (-1,5.5) (out) {$-m$};
\node at (-1,4.5) (out) {$n$};
\node at (1,5.5) (out) {$n$};
\node at (1,4.5) (out) {$-m$};
\node at (-1,7) (out,l) {$1$};
\node at (1,7) (out,l) {$1$};
\node at (-1,3) (in,l) {$1$};
\node at (1,3) (in,l) {$1$};

\node at (2,5) (B) {$\simeq$};

\node at (4,4) (4){};
    \draw[fill=black] (4,4) circle (1.5pt);
        \node at (4,6) (5){};
    \draw[fill=black] (4,6) circle (1.5pt);
\path[every node/.style={font=\sffamily\small}]
(4)   edge[in=-45,out=45,  distance=1cm] node[right]  {} (5);
\path[every node/.style={font=\sffamily\small}]
(4)   edge[in=-135,out=135, distance=1cm] node[left]  {} (5);
\path[every node/.style={font=\sffamily\small}]
(4)   edge[in=-135,out=-45, loop, distance=2.5cm] node[right]  {} (4);
\path[every node/.style={font=\sffamily\small}]
(5)   edge[in=135,out=45, loop, distance=2.5cm] node[right]  {} (5);
\node at (3,5.5) (out) {$m$};
\node at (3,4.5) (out) {$-n$};
\node at (5,5.5) (out) {$n$};
\node at (5,4.5) (out) {$-m$};
\node at (3,7) (out,l) {$1$};
\node at (5,7) (out,l) {$1$};
\node at (3,3) (in,l) {$1$};
\node at (5,3) (in,l) {$1$};

\node at (6,5) (B) {$\simeq$};

\node at (8,4) (6){};
    \draw[fill=black] (8,4) circle (1.5pt);
        \node at (8,6) (7){};
    \draw[fill=black] (8,6) circle (1.5pt);
\path[every node/.style={font=\sffamily\small}]
(6)   edge[in=-45,out=45,  distance=1cm] node[right]  {} (7);
\path[every node/.style={font=\sffamily\small}]
(6)   edge[in=-135,out=135, distance=1cm] node[left]  {} (7);
\path[every node/.style={font=\sffamily\small}]
(6)   edge[in=-135,out=-45, loop, distance=2.5cm] node[right]  {} (6);
\path[every node/.style={font=\sffamily\small}]
(7)   edge[in=135,out=45, loop, distance=2.5cm] node[right]  {} (7);
\node at (7,5.5) (out) {$m$};
\node at (7,4.5) (out) {$n$};
\node at (9,5.5) (out) {$n$};
\node at (9,4.5) (out) {$m$};
\node at (7,7) (out,l) {$1$};
\node at (9,7) (out,l) {$1$};
\node at (7,3) (in,l) {$-1$};
\node at (9,3) (in,l) {$-1$};

\node at (10,5) (B) {$\simeq$};

\node at (12,4) (8){};
    \draw[fill=black] (12,4) circle (1.5pt);
        \node at (12,6) (9){};
    \draw[fill=black] (12,6) circle (1.5pt);
\path[every node/.style={font=\sffamily\small}]
(8)   edge[in=-45,out=45,  distance=1cm] node[right]  {} (9);
\path[every node/.style={font=\sffamily\small}]
(8)   edge[in=-135,out=135, distance=1cm] node[left]  {} (9);
\path[every node/.style={font=\sffamily\small}]
(8)   edge[in=-135,out=-45, loop, distance=2.5cm] node[right]  {} (8);
\path[every node/.style={font=\sffamily\small}]
(9)   edge[in=135,out=45, loop, distance=2.5cm] node[right]  {} (9);
\node at (11,5.5) (out) {$m$};
\node at (11,4.5) (out) {$n$};
\node at (13,5.5) (out) {$n$};
\node at (13,4.5) (out) {$m$};
\node at (11,7) (out,l) {$1$};
\node at (13,7) (out,l) {$1$};
\node at (11,3) (in,l) {$1$};
\node at (13,3) (in,l) {$1$};

\node at (12,1.5)(B){$\downarrow \: 2:1$};

\node at (12,0) (10){};
    \draw[fill=black] (12,0) circle (1.5pt);
\path[every node/.style={font=\sffamily\small}]
(10)   edge[in=45,out=-45, loop, distance=2.5cm] node[right]  {} (10);
\path[every node/.style={font=\sffamily\small}]
(10)   edge[in=135,out=-135, loop, distance=2.5cm] node[left]  {} (10);
\node at (13,1) (out) {$m$};
\node at (13,-1) (in) {$n$};
\node at (11,1) (out,l) {$1$};
\node at (11,-1) (in,l) {$1$};
\end{tikzpicture}
\end{center}

In the top row, we have made \textit{admissible sign changes}, corresponding to choosing the other generator of vertex or edge groups. We first changed the sign of the left side non-loop edge labels, then the bottom vertex's labels, then finally the bottom loop's edges. These sign changes correspond to changing the choice of edge group or vertex group generator. 
    
\end{itemize}

\begin{lemma}
    $G_{(m,0),(n,0)}$ where $\gcd(m,n)=1$ is commensurable to $G_{(km,0),(kn,0)}$ for any nonzero integer $k$. 
\end{lemma}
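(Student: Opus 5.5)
The plan is to exhibit an index $|k|$ subgroup of each group and check, using GBS labelled graphs, that the two subgroups are isomorphic. First, since $G_{(km,0),(kn,0)}\simeq G_{(-km,0),(-kn,0)}$ by the isomorphisms listed just before the lemma, we may assume $k\geq 1$; the case $k=1$ is trivial. Throughout we use the GBS description of $G_{(m,0),(n,0)}$: one vertex (generator $a$), a loop with labels $1,1$ (HNN letter $b$), and a loop with labels $m,n$ (HNN letter $t$).

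\emph{Subgroup of $G_{(m,0),(n,0)}$.} Take the kernel $K_1$ of the map $G_{(m,0),(n,0)}\twoheadrightarrow \integers/k\integers$ with $a\mapsto 0$, $t\mapsto 0$, $b\mapsto 1$. This map kills the vertex group and the edge groups, so the corresponding cover of labelled graphs is an honest topological $k$-fold covering that unwraps the $b$-loop. The labelled graph of $K_1$ is then a cycle of $k$ vertices $v_0,\dots,v_{k-1}$, joined by edges with labels $1,1$. Each $v_i$ also carries its own loop with labels $m,n$, since $t$ maps to $0$ and its loop lifts to loops. This can be checked directly with the Scott--Wall description applied to the $\langle a,t\rangle = BS(m,n)$-vertex splitting, or via the admissible branched covering picture of \cite{For24} with all degrees $1$.

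\emph{Subgroup of $G_{(km,0),(kn,0)}$.} Take the kernel $K_2$ of the map $G_{(km,0),(kn,0)}\twoheadrightarrow\integers/k\integers$ with $a\mapsto 1$, $b\mapsto 0$, $t\mapsto 0$. This is exactly the situation of the example after the Scott--Wall theorem. There is one vertex with vertex group $\langle a^k,b\rangle$, and since the edge group $\langle a^{kn}\rangle$ maps trivially, there are $k$ loops. Each loop has edge group $\langle a^{kn}\rangle$, and the edge maps are inclusion and conjugation, sending $a^{kn}\mapsto a^{km}$. Writing $\alpha=a^k$, this says $\alpha^n\mapsto\alpha^m$ on each of the $k$ loops, while $b$ still commutes with $\alpha$. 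As a GBS labelled graph, $K_2$ is therefore one vertex with one $(1,1)$ loop and $k$ loops labelled $(m,n)$.

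\emph{Matching the graphs.} In the graph of $K_1$, perform $k-1$ elementary collapses along cycle edges $v_0v_1,\dots,v_{k-2}v_{k-1}$. These edges are non-loop edges with label $1$, so collapsing them is allowed and does not change the group. Because the collapsed edges have label $1$ at both ends, all labels at the merged vertex are unchanged. The result is a single vertex carrying the remaining cycle edge as a $(1,1)$ loop and the $k$ loops labelled $(m,n)$. This is exactly the graph of $K_2$, so $K_1\simeq K_2$ and the groups are commensurable.

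The main obstacle I expect is the careful bookkeeping in the Scott--Wall computation for $K_1$, which is done with respect to the GBS splitting rather than the tubular one. One must confirm that the lifts of the $(m,n)$ loop really are loops at each vertex with unchanged labels, and that no branching occurs. Since every vertex and edge group maps trivially to $\integers/k\integers$, all degrees are $1$, so the covering is topological and this should go through. Notably, the hypothesis $\gcd(m,n)=1$ does not appear to be needed in this argument.
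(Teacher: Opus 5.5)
Your proposal is correct and is essentially the paper's argument. The paper uses the same subgroup $K_1$, the kernel of $G_{(m,0),(n,0)}\twoheadrightarrow\integers/k\integers$ with $b\mapsto 1$. It matches $K_1$ with the kernel of a map on $G_{(km,0),(kn,0)}$ sending $a\mapsto 1$. Both kernels are one vertex ($\integers^2$, or equivalently $\integers$ with a $(1,1)$ loop) carrying $|k|$ loops with edge maps $\alpha^m,\alpha^n$.

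The differences are minor. For $m\neq n$ the paper maps onto $\integers/k|m-n|\integers$, uses $\gcd(m,n)=1$ to count the loops, and handles $m=n$ separately with exactly your map. Your map onto $\integers/k\integers$ works uniformly and needs no coprimality. You also read off $K_1$ from the GBS covering plus collapses rather than directly from the tubular Scott--Wall splitting.
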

\begin{proof}

We first handle the case where $m=n$. Consider the map $G_{(m,0),(m,0)} \twoheadrightarrow \integers/m\integers$ sending $a \mapsto 1, b\mapsto 0, t\mapsto 0$. The kernel of this map has graph of groups structure with one vertex and $|m|$ loops, depicted in the leftmost picture below. 

\begin{tikzpicture}
\node at (0,0) (1)[left]{$\langle  a^m, b \rangle$};
\path[every node/.style={font=\sffamily\small}]
(1)   edge[in=45,out=15, loop, distance=2.5cm] node[right]  {$\langle a^m \rangle$} (1);
\path (1)   edge[in=-45,out=-15, loop, distance=2.5cm] node[right]  {} (1);
\node at (2,0) (dots) {$\vdots \hspace{0.5cm} |m|$ loops};
\node at (0.6,1.4) (out) {$a^m$};
\node at (1.5,0.5) (in) {$a^m$};
\node at (4,0) (sim) {$\simeq$};

\node at (6,0) (1)[left]{$\langle  \alpha,\beta \rangle$};
\path[every node/.style={font=\sffamily\small}]
(1)   edge[in=45,out=15, loop, distance=2.5cm] node[right]  {$\langle \alpha \rangle$} (1);
\path (1)   edge[in=-45,out=-15, loop, distance=2.5cm] node[right]  {} (1);
\node at (8,0) (dots) {$\vdots$ \hspace{0.5cm} $|m|$ loops};
\node at (6.6,1.4) (out) {$\alpha$};
\node at (7.5,0.5) (in) {$\alpha$};

\node at (10.5,0) (sim) {$\simeq$};

\node at (12,0) (1)[left]{$\langle  a,b^m \rangle$};
\path[every node/.style={font=\sffamily\small}]
(1)   edge[in=45,out=15, loop, distance=2.5cm] node[right]  {$\langle a \rangle$} (1);
\path (1)   edge[in=-45,out=-15, loop, distance=2.5cm] node[right]  {} (1);
\node at (14,0) (dots) {$\vdots$ \hspace{0.5cm} $|m|$ loops};
\node at (12.6,1.4) (out) {$a$};
\node at (13.5,0.5) (in) {$a$};
\end{tikzpicture}

Letting $\alpha=a^m, \beta=b$, we get the middle picture. But then we recognize this as isomorphic to the rightmost picture, which is the kernel of the map $G_{(1,0),(1,0)} \twoheadrightarrow \integers/m\integers$ sending $a\mapsto 0,b \mapsto 1,t\mapsto 0$. So $G_{(m,0),(m,0)}$ is commensurable to $G_{(1,0),(1,0)}$ for any $m\ne 0$, and hence to each other.

Now suppose $m\ne n$. Consider the map $G_{(km,0),(kn,0)} \twoheadrightarrow \integers/k|m-n|\integers$ sending $a\mapsto 1,b\mapsto 0, t \mapsto 0$. The kernel of this map has graph of groups structure with one vertex and $\gcd(km,k|m-n|) = |k|\gcd(m,|m-n|) = |k|\gcd(m,n)=|k|$ many loops. Note $\lcm(km,k|m-n|) = |k m|\:|m-n|$. So the edge groups have generator $(a^{km})^{|m-n|}$ and the other map goes to $(a^{kn})^{|m-n|}$. 
%(the power of $a$ is the least common multiple up to sign)
\begin{center}
\begin{tikzpicture}
\node at (0,0) (1)[left]{$\langle  a^{k|m-n|}, b \rangle$};
\path[every node/.style={font=\sffamily\small}]
(1)   edge[in=45,out=15, loop, distance=2.5cm] node[right]  {$\langle a^{km|k-m|} \rangle$} (1);
\path (1)   edge[in=-45,out=-15, loop, distance=2.5cm] node[right]  {} (1);
\node at (2,0) (dots) {$\vdots \hspace{0.5cm} |k|$ loops};
\node at (0.6,1.4) (out) {$a^{km|m-n|}$};
\node at (2,0.5) (in) {$a^{kn|m-n|}$};
\node at (4,0) (sim) {$\simeq$};

\node at (6,0) (1)[left]{$\langle  \alpha,\beta \rangle$};
\path[every node/.style={font=\sffamily\small}]
(1)   edge[in=45,out=15, loop, distance=2.5cm] node[right]  {$\langle \alpha^m \rangle$} (1);
\path (1)   edge[in=-45,out=-15, loop, distance=2.5cm] node[right]  {} (1);
\node at (8,0) (dots) {$\vdots$ \hspace{0.5cm} $|k|$ loops};
\node at (6.6,1.4) (out) {$\alpha^m$};
\node at (7.5,0.5) (in) {$\alpha^n$};

\node at (10,0) (sim) {$\simeq$};

\node at (12,0) (1)[left]{$\langle  a,b^k \rangle$};
\path[every node/.style={font=\sffamily\small}]
(1)   edge[in=45,out=15, loop, distance=2.5cm] node[right]  {$\langle a^m \rangle$} (1);
\path (1)   edge[in=-45,out=-15, loop, distance=2.5cm] node[right]  {} (1);
\node at (14,0) (dots) {$\vdots$ \hspace{0.5cm} $|k|$ loops};
\node at (12.6,1.4) (out) {$a^m$};
\node at (13.5,0.5) (in) {$a^n$};
\end{tikzpicture}
\end{center}

Letting $\alpha=a^{k|m-n|},\beta=b$, we get the middle picture. But then we recognize that as a finite index subgroup of $G_{(m,0),(n,0)}$, namely the kernel of the map $G_{(m,0),(n,0)} \twoheadrightarrow \integers/k\integers$ sending $a \mapsto 0, b\mapsto 1, t\mapsto 0$. Therefore $G_{(km,0),(kn,0)}$ and $G_{(m,0),(n,0)}$ are commensurable. 
    
\end{proof}

Together with the last bullet point of the reductions from earlier, this means we can focus on determining which $G_{(m,0),(n,0)}$ are commensurable, for $\gcd(m,n)=1$ and $1\leq m \leq n$.

\begin{lemma}\label{Gm0n0_no_proper_plateay}
    For $\gcd(m,n)=1$, the labelled graph $BS(1,1) \vee BS(m,n)$ for $G_{(m,0),(n,0)}$ has no proper $p$-plateau for any prime $p$.
\end{lemma}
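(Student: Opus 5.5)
The labelled graph $A$ has a single vertex $v$ and two loops. One loop, call it $e_1$, has both ends labelled $1$ (the $BS(1,1)$ loop). The other, $e_2$, has ends labelled $m$ and $n$; that is, $\lambda(e_2)=m$ and $\lambda(\overline{e_2})=n$. A proper plateau must be a non-empty connected subgraph $P\subsetneq A$, so I will first list all candidates. Since $P$ is non-empty and connected, it contains $v$, so $P$ is one of: $\{v\}$ alone, $\{v\}\cup e_1$, or $\{v\}\cup e_2$. Here I take a subgraph to contain an edge together with its reverse. The subgraph $\{v\}\cup e_1\cup e_2$ is all of $A$ and is not proper.

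Next, fix a prime $p$ and test each candidate against the definition. For every oriented edge $e$ with $\partial_0(e)=v\in P$ we need $p\mid \lambda(e)$ exactly when $e\notin P$. The oriented edges starting at $v$ are $e_1,\overline{e_1},e_2,\overline{e_2}$, with labels $1,1,m,n$.
\begin{itemize}
\item \textbf{Candidate $\{v\}$.} Here $e_1\notin P$, so we would need $p\mid 1$, which is impossible.
\item \textbf{Candidate $\{v\}\cup e_2$.} Again $e_1\notin P$ forces $p\mid 1$, which is impossible.
\item \textbf{Candidate $\{v\}\cup e_1$.} The edges $e_1,\overline{e_1}$ lie in $P$ and have label $1$, which is not divisible by $p$, so they are fine. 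But $e_2$ and $\overline{e_2}$ are not in $P$, so we need $p\mid m$ and $p\mid n$. This contradicts $\gcd(m,n)=1$.
\end{itemize}
Hence no proper $p$-plateau exists for any prime $p$.

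The only point needing care is the convention for subgraphs and orientation: $P$ must contain edges in reverse pairs, and both orientations of a loop are edges starting at $v$. With that settled, the case check is immediate. The main "obstacle" is simply that the hypothesis $\gcd(m,n)=1$ is used exactly in the third candidate, and in no other case.
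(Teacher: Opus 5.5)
Your proof is correct and takes essentially the same approach as the paper. Both arguments note that $P$ must contain $v$, that the label-$1$ loop rules out any $P$ omitting it (in particular $\{v\}$), and that omitting the $(m,n)$ loop would force $p\mid m$ and $p\mid n$, contradicting $\gcd(m,n)=1$. Your explicit enumeration of $\{v\}$, $\{v\}\cup e_1$ and $\{v\}\cup e_2$ also spells out why the $1,1$ loop must lie in $P$, which the paper leaves implicit in its final step ``so $P=A$''.
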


\begin{proof}
Let $A$ denote the labelled graph $BS(1,1) \vee BS(m,n)$. Suppose that $P \subs A$ is a $p$-plateau. Being a non-empty subgraph, $P$ must contain the vertex $v$, but note that $\{v\}$ by itself is not a plateau because no prime divides every label at $v$. So we would need at least one edge in $P$, along with $v$. 

\begin{center}
         \begin{tikzpicture}
    \node at (0,0) (v){$v$};
    \draw[->] (v) to[out=45,in=-45,distance=2.5cm] (v);
    \draw[->] (v) to[out=135,in=-135,distance=2.5cm] (v);
\node at (2,0) (out) {$e$};
\node at (-2,0) (in) {$f$};
\node at (1,1) (out) {$m$};
\node at (1,-1) (in) {$n$};
\node at (-1.5,1) (out,l) {$1$};
\node at (-1.5,-1) (in,l) {$1$};
\end{tikzpicture}
\end{center}

If $e\not\in E(P)$, then since $v\in V(P)$, it would mean that $p \mid m$ and $p\mid n$, giving a contradiction since $\gcd(m,n)=1$. So we must have $e\in E(P)$.  Then by definition of a plateau $p \not\mid m$ and $p\not\mid n$. But then $P$ would be a $p$-plateau for prime $p$ not dividing any label of $A$, so $P=A$. Therefore $A$ has no proper $p$-plateau for any prime $p$. This lets us use Theorem \ref{Lev15} later on. 
\end{proof}

\begin{lemma}\label{non-elementary}
    For $\gcd(m,n)=1$, the group $G_{(m,0),(n,0)}$ is non-elementary. 
\end{lemma}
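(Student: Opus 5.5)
The plan is to show that $G=G_{(m,0),(n,0)}$ is isomorphic to none of $\integers$, $\integers^2$, or the Klein bottle group $K$. Each of these has rank at most $2$ and is virtually abelian, so it suffices to find an invariant that rules out all three at once. I will use two, one algebraic and one via abelianization, so that the argument does not depend on a single fact.

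First I would compute the abelianization. From the presentation $\langle a,b,t : [a,b]=1,\ ta^mt^{-1}=a^n\rangle$ we get
$$G^{ab}=\integers^2\oplus \integers/|n-m|\integers,$$
generated by $b,t$ and $a$ with relation $(n-m)a=0$. When $m\ne n$ this has rank $3$ as soon as $|n-m|\ne 1$. When $|n-m|\in\{0,1\}$ the abelianization has rank $2$ or $3$ and is not conclusive, so I would not rely on it alone.

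The cleaner route is to exhibit a non-abelian free subgroup, since $\integers$, $\integers^2$ and $K$ are virtually abelian and contain none. Use the graph of groups with vertex group $\integers^2=\langle a,b\rangle$ and stable letter $t$. The edge group $\langle a^m\rangle$ has infinite index in the vertex group (its index is infinite because $b$ survives). I would consider $x=t$ and $y=btb^{-1}$, or more simply the elements $t$ and $btb^{-1}$, and apply Britton's lemma. A reduced word in $t^{\pm1}$ and $b^{\pm1}$ in which the only $t$-pinches would require a power of $b$ to lie in $\langle a^m\rangle$ or $\langle a^n\rangle$ cannot reduce. Since $b^k\notin\langle a\rangle$ for $k\ne0$, no pinch occurs, so any nontrivial reduced word in $t$ and $btb^{-1}$ is nontrivial. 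Hence $\langle t, btb^{-1}\rangle\cong F_2$.

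Alternatively, I could argue via the Bass--Serre tree. The vertex $\langle a,b\rangle$ has infinitely many incident edges because the edge groups have infinite index, so the tree is not a line. Then $G$ is not virtually $\integers$ or $\integers^2$ in a way compatible with this action. The free subgroup argument is the one I would write, since it is self-contained.

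The main obstacle is doing the Britton's lemma check carefully. For a word $t^{\epsilon_1}b^{k_1}t^{\epsilon_2}\cdots$ coming from products of $t^{\pm1}$ and $bt^{\pm1}b^{-1}$, the interstitial vertex elements are $b^{0}$ or $b^{\pm1}$. Freely reducing in $F(x,y)$ guarantees that any $t^{\epsilon}b^0t^{-\epsilon}$ pair cannot occur. So every pinch candidate has interstitial element $b^{\pm1}$, which is not in $\langle a^m\rangle$ or $\langle a^n\rangle$. With this in hand, $G$ contains $F_2$, so it is not elementary.
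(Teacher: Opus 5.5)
Your proof is correct, but it takes a different route from the paper's.

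\textbf{The paper's route.} The paper rules out $\integers$ and $\integers^2$ by asserting that $b$ and $t$ do not commute. It rules out the Klein bottle group with a rank comparison. The no-proper-plateau lemma, which is where $\gcd(m,n)=1$ is used, combines with Levitt's formula (rank $=\beta(A)+\mu(A)$) to give rank $3$ for $G_{(m,0),(n,0)}$, against rank $2$ for $K$.

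\textbf{Your route.} You show $\langle t, btb^{-1}\rangle\cong F_2$ by Britton's lemma, and then use that $\integers$, $\integers^2$ and $K$ are virtually abelian. Your check is right. Free reduction in $F(x,y)$ removes every $t^{\epsilon}b^{0}t^{-\epsilon}$ pattern. The only other interstitial elements are $b^{\pm1}$, and these do not lie in $\langle a\rangle$, so no pinch occurs.

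\textbf{Comparison.} Your argument is more elementary and self-contained, and needs no GBS machinery. It never uses $\gcd(m,n)=1$, so it works for all nonzero $m,n$. It also shows $G$ is not solvable, so it excludes $BS(1,N)$ as well. The paper makes that exclusion later in the quasi-isometry section using the same rank-$3$ computation. The paper's route costs more, but in exchange it produces the rank invariant it reuses there.

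\textbf{A minor correction to your side remark.} The abelianization is more conclusive than you say. For $m=n$ it is $\integers^3$. In every case $G^{ab}$ has torsion-free rank at least $2$, while $\integers^{ab}=\integers$ and $K^{ab}\cong\integers\oplus\integers/2\integers$ have torsion-free rank $1$. So abelianization, together with the non-commutativity of $b$ and $t$, would already suffice. Since you do not rely on that route, this does not affect your proof.
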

\begin{proof}

Notice that $G_{(m,0),(n,0)} = \langle a,b,t: [a,b]=1, ta^n t^{-1} = a^m\rangle$ is non-abelian, because $b$ and $t$ do not commute. As such, $G_{(m,0),(n,0)}$ is not isomorphic to $\integers$ or $\integers^2$. To rule out the Klein bottle group $K$, we compare the rank of $G_{(m,0),(n,0)}$ and of $K = BS(1,-1)$. 

The Betti number of a graph $A$ is calculated by  $$\beta(A) = \frac{|E(A)|}{2} + |\text{connected components of }A| - |V(A)|$$ where we have divided by two to get number of edges in $A$ treated as an undirected graph. 

By the previous lemma, the only plateau for the labelled graph of $G_{(m,0),(n,0)}$ (where $\gcd(m,n)=1$) is the whole graph itself, which has one vertex, so $\mu(G_{(m,0),(n,0)})=1$. Meanwhile we have 2 edges, 1 connected component and 1 vertex, so $\beta(G_{(m,0),(n,0)}) =2$. Thus the rank of $G_{(m,0),(n,0)}$ is $3$. 

A very similar argument to the previous lemma shows that the labelled graph for $K=BS(1,-1)$ has no plateau either, and so $\mu(K)=1$ also. Meanwhile the labelled graph for $BS(1,-1)$ has 1 edge, 1 connected component and 1 vertex so $\beta(K) = 1$. Thus the rank of $K$ is $2$. This is different from the rank of $G_{(m,0),(n,0)}$, so $G_{(m,0),(n,0)}$ is not isomorphic to $K$. 
Therefore $G_{(m,0),(n,0)}$ is non-elementary. 
\end{proof}

\subsubsection{Using the modular homomorphism}

Since $G_{(m,0),(n,0)}$ for $\gcd(m,n)=1$ is non-elementary, we are able to define the modular homomorphism. Taking the loop perspective of the modular homomorphism, we see that the $(1,1)$ loop only contributes 1 to the image of $q$, so we just have $q(G_{(m,0),(n,0)}) = \langle \frac{m}{n} \rangle_{\rationals^*}$.  
\begin{center}
         \begin{tikzpicture}
    \node at (0,0) (v){};
  \draw[fill=black] (0,0) circle (1.5pt);
    \draw[->] (v) to[out=45,in=-45,distance=2.5cm] (v);
    \draw[->] (v) to[out=135,in=-135,distance=2.5cm] (v);
\node at (1,1) (out) {$m$};
\node at (1,-1) (in) {$n$};
\node at (-1.5,1) (out,l) {$1$};
\node at (-1.5,-1) (in,l) {$1$};
\end{tikzpicture}
\end{center}

By the Fact following Definition \ref{modularhom}, we get that if $G_{(m,0),(n,0)}$ were commensurable to $G_{(p,0),(q,0)}$, then we would have $\left(\frac{m}{n}\right)^k = \left(\frac{p}{q}\right)^\ell$ for some $k,\ell \in \integers\setminus \{0\}$. This already tells us that a lot of candidate pairs of groups are incommensurable, for example $G_{(1,0),(2,0)}$, $G_{(1,0),(3,0)}$ and $G_{(3,0),(5,0)}$ are pairwise incommensurable. 

The condition that $\left(\frac{m}{n}\right)^k = \left(\frac{p}{q}\right)^\ell$ for some $k,\ell \in \integers\setminus \{0\}$ requires  $\frac{m}{n}$ and $\frac{p}{q}$ to be powers of the same fraction $\frac{c}{d}$. We still need to investigate whether pairs like $G_{(1,0),(2,0)}$ and $G_{(1,0),(4,0)}$ are commensurable, or whether $G_{(2,0),(3,0)}$ and $G_{(4,0),(9,0)}$ are commensurable. But notice that the first two will have integers (other than $1$) in their image under the modular homomorphism, whereas the latter two will not, so the the first two are distinguishable from the latter two. 

As such, it's natural to split into two cases here: where the common power fraction has numerator 1, i.e.  groups of the form $G_{(1,0),(n^k,0)}$, and when it doesn't, i.e. groups of the form $G_{(m^k,0),(n^k,0)}$ for relatively prime $m,n\ne 1$. 

\subsubsection{Using the depth profile to distinguish \texorpdfstring{$G_{(1,0),(n^k,0)}$}{G}}

We are able to calculate the depth profile for $G_{(1,0),(N,0)}$ using Theorem \ref{Prop6.2For} (Proposition 6.2 in For24). The hypotheses there apply because $G_{(1,0),(N,0)} = BS(1,N) \vee BS(1,1)$. Let $V$ be the vertex group of $BS(1,N) \vee BS(1,1)$. We find that the depth profile is
$$\D(G_{(1,0),(N,0)},V) = \{N^i: i\in \naturals \cup \{0\}\}.$$
Recall that the depth profile itself isn't a commensurability invariant, but the equivalence class of a depth profile under the equivalence relation $S\subs \naturals$ is equivalent to $S/r := \{\frac{m}{\gcd(m,r)}\}$ (for any $r\in\naturals$) is a commensurability invariant. 

\begin{lemma}
    $G_{(1,0),(n^k,0)}$ is not commensurable to $G_{(1,0),(n^j,0)}$ for $j\ne k$. 
\end{lemma}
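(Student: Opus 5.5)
The plan is to use the depth profile as a commensurability invariant. By Theorem \ref{Prop6.2For}, with $N = n^k$ (respectively $N=n^j$) and $r=1$, $n_1 = 1$, we have
$$\D(G_{(1,0),(n^k,0)},V) = \{n^{ki}: i\geq 0\}, \qquad \D(G_{(1,0),(n^j,0)},V') = \{n^{ji}: i\geq 0\}.$$
We may assume $n\geq 2$; otherwise both groups coincide with $G_{(1,0),(1,0)}$ and the statement is vacuous for distinct values. We may also assume $j,k\geq 1$, with $n^0=1$ giving the $F\times\integers$-type case, which is already separated from the others (for instance by the modular homomorphism, whose image is trivial there and nontrivial otherwise). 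The goal is to show that the sets $S_k=\{n^{ki}\}$ and $S_j=\{n^{ji}\}$ are not equivalent under the relation generated by $S\sim S/r$.

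The key step is to find an invariant of subsets of $\naturals$ that is preserved by $S\mapsto S/r$ and by its inverse, and that distinguishes $S_k$ from $S_j$. First compute $S_k/r$: writing $r = n^{a}s$ is not quite right when $n$ is not prime, so work one prime at a time. Fix a prime $p\mid n$ with $v_p(n)=e$. Then the $p$-adic valuations of $S_k$ form the arithmetic progression $\{eki: i\ge 0\}$. Dividing by $\gcd(x,r)$ replaces a valuation $v$ by $\max(v - v_p(r),0)$. So the operation $S\mapsto S/r$ acts on the valuation multiset by truncation: subtract a constant $c$ and clip at $0$.

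The candidate invariant is therefore the \emph{eventual gap structure}: for an infinite set $S$, look at the set of $p$-adic valuations $v_p(S)$, and record the common difference of the tail. This difference is well defined up to finitely many initial terms, and truncation preserves it. Since $S\sim T$ means there is a chain $S=S_0, S_1,\dots,S_t=T$ in which each consecutive pair is related by a truncation in one direction or the other, the eventual difference is constant along the chain. For $S_k$ this difference is $ek$ and for $S_j$ it is $ej$, which differ when $j\neq k$. One should double check that the tail of $v_p(S_k/r)$ really is the progression shifted by $c$, which follows because only finitely many terms get clipped to $0$.

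The main obstacle is making the invariant airtight for the relation as defined, which allows division by arbitrary $r$ rather than only by powers of $p$, and which includes inverse steps. The cleanest route is to observe that $(S/r)$ depends, prime by prime, only on the valuation truncation. It also helps to phrase the invariant as "$v_p(S)$ is eventually an arithmetic progression with difference $d$", which is a property symmetric under both directions of the relation. A potential subtlety is that elements of $S$ could collide after dividing; this is harmless because the sets are treated as sets and collisions only affect finitely many small valuations. With this in hand, the inequivalence of the depth profiles yields incommensurability of $G_{(1,0),(n^k,0)}$ and $G_{(1,0),(n^j,0)}$.
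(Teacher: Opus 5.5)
Your proposal is correct. It follows the same overall strategy as the paper: compute $\D(G_{(1,0),(N,0)},V)=\{N^i\}$ via Theorem~\ref{Prop6.2For}, then find a quantity attached to subsets of $\naturals$ that survives the equivalence generated by $S\sim S/r$.

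The difference is in the invariant. The paper sorts the elements and looks at the tail of successive ratios. It observes that for $S=\{n^{ki}\}$ the set $S/r$ still has eventual ratio $n^k$, because $\gcd(n^{ki},r)$ stabilizes. But this is only checked on sets of the form $\{N^i\}/r$, and the ratio tail is not an invariant of the whole equivalence class. For example, $S'=\{2^i\}\cup\{3\cdot 2^i\}$ satisfies $S'/3=\{2^i\}$, yet its sorted successive ratios eventually alternate between $3/2$ and $4/3$. So the paper's argument does not by itself control chains that pass through inverse steps.

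Your prime-by-prime version avoids this. The identity $v_p(S/r)=\{\max(v-v_p(r),0): v\in v_p(S)\}$ holds for every $S$, and the truncation is injective above $v_p(r)$. Hence the property ``$v_p(S)$ is infinite and eventually an arithmetic progression with difference $d$'' transfers in both directions along any chain. This gives a genuine class invariant, with $d=ek$ versus $d=ej$. The paper's formulation buys brevity and stays with the integers themselves; yours buys a proof that actually respects the symmetric and transitive closure.

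Your remark about collisions is right only at the level of valuations. As integers, infinitely many elements can collide, as in $S'$ above, but this is harmless since you only use $v_p(S)$.

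Your side points are also correct: the lemma implicitly needs $n\ge 2$, and the case $j=0$ or $k=0$ is separated by the modular homomorphism.
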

\begin{proof}
    For the group $G_{(1,0),(n^k,0)}$, denoting its vertex group as $V_k$, we have
$$\D(G_{(1,0),(n^k,0)},V_k) = \{n^{ki}: i\in\naturals \cup \{0\}\}.$$
Under the equivalence relation, it is equivalent to 
$$\D(G_{(1,0),(n^k,0)},V_k)/r = \left\{ \frac{n^{ki}}{\gcd(n^{ki},r)}: i\in \naturals \cup \{0\} \right\}$$ for any $r$. But note that for any $r$, if we order the elements by size, aside from finitely many exceptions any two successive elements have ratio $n^k$. It's because $\gcd(n^{ki},r)$ stabilizes as $i\to\infty$ to $r$ with all of its prime factors not dividing $n$ removed. So, if we look at ratios of successive elements, the tail of the sequence is invariant under the equivalence relation. For $G_{(1,0),(n^k,0)}$, the tail will be $n^k, n^k, n^k,\dots$. 

For $j\ne k$, we get a different tail for $\D(G_{(1,0),(n^j,0)},V_j)$ (where we are denoting the vertex group as $V_j$). The tail of the sequence of ratios of successive elements there will be $n^j, n^j, n^j,\dots$. As a result, $G_{(1,0),(n^k,0)}$ is not commensurable to $G_{(1,0),(n^j,0)}$ for $k\ne j$. 
\end{proof}

\begin{center}
         \begin{tikzpicture}
    \node at (0,0) (v){};
  \draw[fill=black] (0,0) circle (1.5pt);
    \draw[->] (v) to[out=45,in=-45,distance=2.5cm] (v);
    \draw[->] (v) to[out=135,in=-135,distance=2.5cm] (v);
\node at (1,1) (out) {$1$};
\node at (1,-1) (in) {$n^k$};
\node at (-1.5,1) (out,l) {$1$};
\node at (-1.5,-1) (in,l) {$1$};

    \node at (6,0) (u){};
  \draw[fill=black] (6,0) circle (1.5pt);
    \draw[->] (u) to[out=45,in=-45,distance=2.5cm] (u);
    \draw[->] (u) to[out=135,in=-135,distance=2.5cm] (u);
\node at (7,1) (out) {$1$};
\node at (7,-1) (in) {$n^j$};
\node at (4.5,1) (out,l) {$1$};
\node at (4.5,-1) (in,l) {$1$};
\end{tikzpicture}
\end{center}

This is surprising because from \cite{CRKZ} we know that $BS(1,n^k)$ is commensurable to $BS(1,n^j)$. Somehow by adding an extra loop to get $G_{(1,0),(n^k,0)}$ and $G_{(1,0),(n^j,0)}$, their depth profile's sequence of successive ratios will have different tails, which distinguishes them and makes them incommensurable now.

\subsubsection{Using slide moves to distinguish \texorpdfstring{$G_{(m^k,0),(n^k,0)}$}{G} for \texorpdfstring{$m,n\ne 1$}{not1}}

We saw in Lemma \ref{Gm0n0_no_proper_plateay} that the $BS(1,1) \vee BS(M,N)$ labelled graph for $G_{(M,0),(N,0)}$ for $\gcd(M,N)=1$ has no proper $p$-plateau for any prime $p$. Thus, by Theorem \ref{Lev15}, any finite index subgroup of $G_{(M,0),(N,0)}$ is actually a topological covering. 

\begin{center}
         \begin{tikzpicture}
         \node at (0,2.5)(B){$(B,\mu)$};
\node at (0,1.5) (arrow){$\downarrow$};
         
    \node at (0,0) (v){};
  \draw[fill=black] (0,0) circle (1.5pt);
    \draw[->] (v) to[out=45,in=-45,distance=2.5cm] (v);
    \draw[->] (v) to[out=135,in=-135,distance=2.5cm] (v);
\node at (1,1) (out) {$M$};
\node at (1,-1) (in) {$N$};
\node at (-1.5,1) (out,l) {$1$};
\node at (-1.5,-1) (in,l) {$1$};
\end{tikzpicture}
\end{center}

Every vertex in $(B,\mu)$ is (before performing any deformation moves) 4-valent and edges in $(B,\mu)$ either have labels $1,1$ or $M,N$. 

\begin{center}
\begin{tikzpicture}
     \node at (0,0) (v){};
       \draw[fill=black] (0,0) circle (1.5pt);
       \node at (2,0)(u){};
    \draw[fill=black] (2,0) circle (1.5pt);
    \draw[->] (v) to[out=0,in=180,distance=2.5cm] (u);
    \node at (0.5,0.25) (out) {$M$};
\node at (1.5,0.25) (in) {$N$};

\node at (3,0) (w){};
       \draw[fill=black] (3,0) circle (1.5pt);
    \draw[->] (w) to[out=45,in=-45,distance=2.5cm] (w);
        \node at (3.5,0.75) (out) {$M$};
\node at (3.5,-0.75) (in) {$N$};

     \node at (0,-2) (v'){};
       \draw[fill=black] (0,-2) circle (1.5pt);
       \node at (2,-2)(u'){};
    \draw[fill=black] (2,-2) circle (1.5pt);
    \draw[->] (v') to[out=0,in=180,distance=2.5cm] (u');
    \node at (0.5,-1.75) (out) {$1$};
\node at (1.5,-1.75) (in) {$1$};

\node at (3,-2) (w'){};
       \draw[fill=black] (3,-2) circle (1.5pt);
    \draw[->] (w') to[out=45,in=-45,distance=2.5cm] (w');
        \node at (3.5,-1.25) (out) {$1$};
\node at (3.5,-2.75) (in) {$1$};
\end{tikzpicture}
\end{center}

The edges could be loops, or they could be straight edges. But we can always get to a reduced labelled graph $(B,\widetilde{\mu})$ by performing collapse moves on all the $1,1$ straight edges. After doing collapses to get $(B,\widetilde{\mu})$, edges are now $M,N$ straight edges, $M,N$ loops or $1,1$ loops. (Since all the edges we collapsed were $1,1$ edges, the labels for the remaining edges were not affected.) $(B,\widetilde{\mu})$  represents the same GBS group as $(B,\mu)$, since it was obtained by elementary deformation moves. Notice that we could still potentially do slide moves to $(\widetilde{B}, \widetilde{\mu})$. Slide moves require divisibility between incident labels and the only divisibility we have are $1 \mid M$, $1 \mid N$, $M \mid M$ and $N \mid N$. Sliding with $1,1$ loops has no effect on labels. 

\begin{center}
\begin{tikzpicture}
     \node at (0,0) (1){};
       \draw[fill=black] (0,0) circle (1.5pt);
       \node at (2,0)(2){};
    \draw[fill=black] (2,0) circle (1.5pt);
        \draw[->] (1) to[out=0,in=180,distance=2.5cm] (2);
            \node at (1.5,-0.25) (out) {$N$};
\node at (0.5,-0.25) (in) {$M$};
\node at (-1,1) (3){};
\draw[fill=black] (-1,1) circle (1.5pt);
\draw[->] (1) to[out=135,in=-45,distance=0.5cm] (3);
            \node at (-0.5,0.25) (out) {$M$};
\node at (-1.2,0.8) (in) {$N$};

\node at (3,0)(arrow){$\xrightarrow[]{}$};

     \node at (4,0) (1){};
       \draw[fill=black] (4,0) circle (1.5pt);
       \node at (6,0)(2){};
    \draw[fill=black] (6,0) circle (1.5pt);
        \draw[->] (1) to[out=0,in=180,distance=2.5cm] (2);
            \node at (5.5,-0.25) (out) {$N$};
\node at (4.5,-0.25) (in) {$M$};
\node at (7,1) (3){};
\draw[fill=black] (7,1) circle (1.5pt);
\draw[->] (2) to[out=45,in=-135,distance=0.5cm] (3);
            \node at (6.5,0.25) (out) {$N$};
\node at (7.2,0.8) (in) {$N$};

\node at (0,-2) (1){};
       \draw[fill=black] (0,-2) circle (1.5pt);
       \node at (2,-2)(2){};
    \draw[fill=black] (2,-2) circle (1.5pt);
        \draw[->] (1) to[out=0,in=180,distance=2.5cm] (2);
            \node at (1.5,-2.25) (out) {$N$};
\node at (0.5,-2.25) (in) {$M$};
\draw[->] (2) to[out=45,in=-45,distance=2.5cm] (2);
\node at (2.5,-1.3) (out) {$M$};
\node at (2.5,-2.7) (out) {$N$};

\node at (4.5,-2)(arrow){$\xrightarrow[]{}$};

\node at (6,-2) (1){};
       \draw[fill=black] (6,-2) circle (1.5pt);
       \node at (8,-2)(2){};
    \draw[fill=black] (8,-2) circle (1.5pt);
        \draw[->] (1) to[out=0,in=180,distance=2.5cm] (2);
            \node at (7.5,-2.25) (out) {$M$};
\node at (6.5,-2.25) (in) {$M$};
\draw[->] (2) to[out=45,in=-45,distance=2.5cm] (2);
\node at (8.5,-1.3) (out) {$M$};
\node at (8.5,-2.7) (out) {$N$};

\end{tikzpicture}
\end{center}

The only thing that could happen is that an $M,N$ edge could slide over an incident $M,N$ edge to produce $M,M$ or $N,N$ edges (see picture for how these edges can be produced). But notice that the set of possible labels remains $\{1,M,N\}$ regardless of these slide moves. Also note that there are necessarily non-1 labels showing up. 

\begin{lemma}
    $G_{(m^i,0),(n^i,0)}$ is not commensurable to $G_{(m^j,0),(n^j,0)}$ for $i\ne j$. 
\end{lemma}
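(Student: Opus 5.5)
We argue by contradiction, following the setup just described. Suppose $i<j$ (without loss of generality) and that $G_1=G_{(m^i,0),(n^i,0)}$ and $G_2=G_{(m^j,0),(n^j,0)}$ share a common finite index subgroup $H$. We may replace $H$ by a smaller finite index subgroup, so assume $H$ has finite index in both. By Lemma \ref{Gm0n0_no_proper_plateay} (applied with $M=m^i,N=n^i$ and with $M=m^j,N=n^j$, both coprime pairs) and Theorem \ref{Lev15}, the admissible branched coverings describing $H$ in each group are genuine topological coverings. So $H$ is represented by a labelled graph $(B_1,\mu_1)$ covering $BS(1,1)\vee BS(m^i,n^i)$, all of whose labels lie in $\{1,m^i,n^i\}$. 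It is also represented by a labelled graph $(B_2,\mu_2)$ covering $BS(1,1)\vee BS(m^j,n^j)$, all of whose labels lie in $\{1,m^j,n^j\}$. In both graphs, non-$1$ labels occur, because $H$ meets a conjugate of the $M,N$ loop's edge group.

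Next, collapse all $1,1$ straight edges in each graph to obtain reduced labelled graphs $(\widetilde B_1,\widetilde\mu_1)$ and $(\widetilde B_2,\widetilde\mu_2)$ representing the same group $H$. As observed above, the label sets are unchanged, and non-$1$ labels survive. We then want to apply Theorem \ref{Prop3.5For}, which needs $q(H)\cap\integers=\{1\}$ and $H$ non-elementary.

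For non-elementarity, $H$ is a finite index subgroup of the non-elementary group $G_1$ (Lemma \ref{non-elementary}), so it is not virtually abelian. Alternatively, it contains a nonabelian free subgroup from the two hyperbolic directions.

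For the modular condition, $q(H)$ is a finite index subgroup of $q(G_1)=\langle m^i/n^i\rangle$. Since $m,n\neq1$ and are coprime, no nonzero power $(m/n)^{s}$ other than the trivial one is an integer. Here we use $m,n\ge 2$ (having reduced to $1\le m\le n$ with signs). Hence $q(H)\cap\integers=\{1\}$.

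So Theorem \ref{Prop3.5For} says $\widetilde B_1$ and $\widetilde B_2$ are related by a finite sequence of slide moves. By the discussion preceding the lemma, slide moves applied to $\widetilde B_1$ keep every label in $\{1,m^i,n^i\}$. The resulting graph must equal $\widetilde B_2$, whose labels lie in $\{1,m^j,n^j\}$ and include some label other than $1$. Since $m,n\ge2$, coprime and $i\ne j$, the sets $\{m^i,n^i\}$ and $\{m^j,n^j\}$ are disjoint: $m^i=n^j$ is impossible by coprimality, and $m^i=m^j$ forces $i=j$. This is a contradiction.

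The main obstacle is making the slide-move invariance rigorous. Along the sequence from $\widetilde B_1$, a slide of an edge with label $M$ over an edge with labels $M,N$ changes that label to $N$, and one must check that nothing new (such as a product $N^2/M$) appears. A slide of an end with label $\lambda$ over an edge $f$ requires $\lambda(f)\mid\lambda$ and replaces $\lambda$ by $\lambda\cdot\lambda(\overline f)/\lambda(f)$. So I would verify the invariance by induction: every label stays in $\{1,M,N\}$, using that the only divisibilities among labels are $1\mid\cdot$ and equality. The two checks are that $N\cdot N/M$ arises only when $M\mid N$, which is impossible for coprime $M,N\ge2$, and that sliding over $1,1$ loops changes nothing.
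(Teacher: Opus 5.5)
Your proposal is correct and follows the paper's proof essentially step for step. No proper plateau gives genuine coverings with labels in $\{1,m^i,n^i\}$ (resp.\ $\{1,m^j,n^j\}$); you collapse the $1,1$ straight edges, use $q(H)\cap\integers=\{1\}$ and Forester's theorem to relate the two reduced graphs by slide moves, and conclude from the disjointness of $\{m^i,n^i\}$ and $\{m^j,n^j\}$. You also supply details the paper leaves implicit, namely the non-elementarity of $H$ and the slide-invariance check. For that check, state the inductive invariant as ``every edge has labels $1,1$ or has both labels in $\{M,N\}$'', since ``all labels lie in $\{1,M,N\}$'' alone would not rule out a slide over a mixed $(1,M)$ edge multiplying a label by $M$.
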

\begin{proof}
    Suppose for a contradiction that $G_{(m^i,0),(n^i,0)}$ is commensurable to $G_{(m^j,0),(n^j,0)}$ for $i\ne j$. Then  after doing collapses, we would have reduced labelled graphs $(\widetilde{B}, \widetilde{\mu})$ and $(\widehat{B}, \widehat{\mu})$, representing a finite index subgroup of $G_{(m^i,0),(n^i,0)}$ and of $G_{(m^j,0),(n^j,0)}$ respectively, where those finite index subgroups are isomorphic. The labelled graphs $(\widetilde{B}, \widetilde{\mu})$ and $(\widehat{B}, \widehat{\mu})$ represent the same GBS group $H$. But note that the image under the modular homomorphism of this GBS group is $q(H) = \langle \left(\frac{m^i}{n^i} \right)^\ell\rangle $ (where $\ell=[G_{(m^i,0),(n^i,0)}:H]$) and this has trivial intersection with $\integers$. Then by Theorem \ref{Prop3.5For}, $(\widetilde{B}, \widetilde{\mu})$ and $(\widehat{B}, \widehat{\mu})$ should be related by slide moves. However, there are no slide moves that can take one to the other. Even if we do all possible slides, labels in $(\widetilde{B}, \widetilde{\mu})$ can only ever be $\{1,m^i, n^i\}$ (where there are necessarily non-1 labels  appearing), whereas labels in $(\widehat{B}, \widehat{\mu})$ can only ever be $\{1,m^j,n^j\}$. Since $i\ne j$ and $\gcd(m,n)=1$, the non-1 labels are pairwise distinct. Therefore $G_{(m^i,0),(n^i,0)}$ and $G_{(m^j,0),(n^j,0)}$ cannot  be commensurable. 
\end{proof}

\section{Quasi-isometry classification}
\begin{lemma}
    $G_{(m,0),(n,0)}$ for $1 \leq m < n$ are quasi-isometric to $BS(2,3)$. Meanwhile, the groups $G_{(m,0),(m,0)}$ are in a separate quasi-isometry class. 
    %and are commensurable to $BS(n,n)$ for $n\geq 2$. 
\end{lemma}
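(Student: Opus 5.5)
The plan is to apply Whyte's trichotomy, Theorem \ref{Whyte}, to the GBS group $G_{(m,0),(n,0)}$, working throughout with the one vertex, two loop labelled graph $BS(1,1)\vee BS(m,n)$. By the earlier reductions we may take $\gcd(m,n)=1$. This is harmless: $G_{(cm,0),(cn,0)}$ is commensurable to $G_{(m,0),(n,0)}$, and commensurable groups are quasi-isometric. We also have $G_{(m,0),(m,0)}$ commensurable to $G_{(1,0),(1,0)}$. So it suffices to show two things: that $G_{(m,0),(n,0)}$ with $\gcd(m,n)=1$ and $m<n$ falls in the third alternative of Theorem \ref{Whyte}, and that $G_{(1,0),(1,0)}$ falls in the first.

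\emph{The unimodular case.} $G_{(1,0),(1,0)}=\langle a,b,t : [a,b]=[a,t]=1\rangle \cong \integers\times F_2$, with $a$ generating the $\integers$ factor and $b,t$ generating the free group. So it is already of the form $F_n\times\integers$ and lies in alternative 1. The same holds for every $G_{(m,0),(m,0)}$, since these are commensurable to it.

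\emph{The non-unimodular case, $\gcd(m,n)=1$ and $m<n$.} We rule out alternatives 1 and 2.

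First, $G$ is not $BS(1,N)$. By Lemma \ref{non-elementary}, the rank of $G$ is $3$, whereas $BS(1,N)$ has rank $2$ (equivalently, $BS(1,N)$ is solvable and $G$ contains the free group $\langle b,t\rangle$).

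Second, $G$ is not virtually $F_k\times\integers$. Suppose $H\le G$ had finite index with $H\cong F_k\times \integers$. The modular homomorphism of $H$ is the restriction of $q$, by the Remark from \cite{CRKZ}, so $q(H)$ has finite index in $q(G)=\langle m/n\rangle\cong\integers$, which is nontrivial because $m\neq n$. The central $\integers$ of $H$ is elliptic: its generator commutes with a finite index subgroup of the elliptic subgroup $H\cap\langle a\rangle$, and a central element of a non-elementary GBS group must be elliptic. Fix a nontrivial elliptic element $z$ in the centre of $H$. Then every $h\in H$ satisfies $hzh^{-1}=z$, so $q(h)=1$. Hence $q(H)=1$, a contradiction.

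To make the ellipticity claim rigorous, the cleanest route is this. If the centre of $H$ contained a hyperbolic element $z$, then $H$ would preserve the axis of $z$ in the GBS tree. The edge stabilizers are infinite cyclic and elliptic, so $H$ would then be virtually $\integers^2$ or smaller. But $H$ has finite index in $G$, which contains $F_2$ and acts with an infinite, non-linear tree, so this is impossible. With both alternatives excluded, Theorem \ref{Whyte} forces $G$ to be quasi-isometric to $BS(2,3)$.

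\emph{Separating the classes.} $F_n\times\integers$ and $BS(2,3)$ are not quasi-isometric: Whyte's trichotomy asserts that exactly one alternative holds, and virtual $F_n\times \integers$ is a quasi-isometry-invariant property by Whyte's theorem. So the groups $G_{(m,0),(m,0)}$ form a separate class.

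The main obstacle I expect is the step showing that a finite-index $F_k\times\integers$ subgroup forces trivial modular image. It hinges on the centre of $H$ being elliptic, and I would argue that via the invariant-axis argument above.
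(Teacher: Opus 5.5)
Your proof is correct and follows the same route as the paper. You reduce to $\gcd(m,n)=1$ by commensurability, rule out $BS(1,N)$ by rank, rule out virtual $F_k\times\integers$ via the infinite modular image $\langle m/n\rangle$, and invoke the exclusivity in Whyte's trichotomy, which also separates the $G_{(m,0),(m,0)}$ class. The only difference is that the paper cites the equivalence ``virtually $F_k\times\integers$ iff unimodular'' for non-elementary GBS groups. You instead prove the needed direction directly (a central element of a finite-index subgroup must be elliptic, which forces trivial modular image), and you check $G_{(1,0),(1,0)}\cong\integers\times F_2$ by hand, so your argument is more self-contained.
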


\begin{proof}
We use Theorem \ref{Whyte} (Whyte's classification of GBS groups).  Starting with any $G_{(M,0),(N,0)}$ with $1 \leq M<N$, it is commensurable to (and so quasi-isometric to ) $G_{(m,0),(n,0)}$ where $\gcd(m,n)=1$ and $1\leq m<n$ still. We showed in Lemma \ref{non-elementary} that these are non-elementary. For non-elementary GBS groups, containing a finite index subgroup of the form $F_{k} \times \integers$ is equivalent to being unimodular. The $G_{(m,0),(n,0)}$ are not unimodular (their image under the modular homomorphism is $\langle \frac{m}{n} \rangle_{\rationals^*}$). Moreover, a rank argument shows that these $G_{(m,0),(n,0)}$ are not $BS(1,n)$. ($G_{(m,0),(n,0)}$ has rank 3, whereas $BS(1,n)$ has rank 2.) Therefore, by Theorem \ref{Whyte} , these $G_{(m,0),(n,0)}$ must be quasi-isometric to $BS(2,3)$.

Meanwhile, the groups $G_{(m,0),(m,0)}$ are commensurable to $G_{(1,0),(1,0)}$ which is non-elementary and unimodular, hence they have a finite index subgroup of the form $F_k \times \integers$. By Theorem \ref{Whyte}, they are not quasi-isometric to the $G_{(m,0),(n,0)}$ for $1 \leq m < n$.  
\end{proof}

\begin{lemma}
    $G_{(1,0),(0,1)}$ is not isomorphic to $G_{(1,0),(1,0)}$. 
\end{lemma}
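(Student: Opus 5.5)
We distinguish the two groups by an isomorphism invariant that is easy to compute from the presentations. The cleanest such invariant is the abelianization.

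For $G_{(1,0),(0,1)} = \langle a,b,t : [a,b]=1,\ tat^{-1}=b\rangle$, abelianizing sets $a=b$, which gives $G^{ab}\cong \integers^2$, generated by the images of $a$ and $t$. For $G_{(1,0),(1,0)} = \langle a,b,t: [a,b]=1,\ tat^{-1}=a\rangle$, the relation becomes trivial after abelianizing, which gives $\integers^3$. Since $\integers^2\not\cong\integers^3$ (for instance, by comparing ranks after tensoring with $\rationals$), the groups are not isomorphic.

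As a cross-check we could instead compare centers. Every element of $G_{(1,0),(1,0)}$ commutes with $a$, so $a$ is central and nontrivial (it is infinite order in the vertex group). On the other hand, $G_{(1,0),(0,1)}$ is a mapping-torus-like HNN extension of $\integers^2$ with $t$ conjugating $a$ to $b$, and a Bass--Serre argument would show its center is trivial. The abelianization route avoids that argument entirely, so it is the one we will use.

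There is essentially no obstacle here. The only point needing care is the bookkeeping of the presentations: the first group is $\langle a,b,t \mid [a,b],\ tat^{-1}b^{-1}\rangle$ and the second is $\langle a,b,t\mid [a,b],\ tat^{-1}a^{-1}\rangle$, and the abelianization computation is immediate from these.

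We would also note why this lemma matters downstream. Both groups are rank three and one-ended, so mere non-isomorphism is only a warm-up. The real task, handled next via JSJ trees of cylinders, is to show that they are not even quasi-isometric.
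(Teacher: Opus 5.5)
Your proof is correct, and it is much shorter than the paper's. The paper separates the groups by their centers. In $G_{(1,0),(1,0)}$ the element $a$ is central. For $G_{(1,0),(0,1)}$ the paper uses Britton's lemma and a normal form for the HNN extension. It first shows that any element commuting with both $a$ and $b$ lies in $\langle a,b\rangle$, and then that no nontrivial element of $\langle a,b\rangle$ commutes with $t$, so the center is trivial. Your abelianization computation ($\integers^2$ versus $\integers^3$) does the same job in one line. The paper's route is not needed for this lemma itself. Its advantage is that the same Britton's-lemma bookkeeping is essentially reused in the next lemma: there it shows that conjugates of $a$ landing in the vertex group are only $a$ or $b$, which gives rigidity of the $\integers^2$ vertex in the JSJ decomposition. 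So the center computation works as a warm-up for that, whereas your invariant gives no information beyond non-isomorphism.

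Two small remarks, neither affecting your argument:

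\begin{itemize}
\item With the paper's convention $ta^mb^nt^{-1}=a^kb^\ell$, the relation in $G_{(1,0),(0,1)}$ is $tbt^{-1}=a$, not $tat^{-1}=b$. This is harmless: replace $t$ by $t^{-1}$, and the abelianization is identical anyway.
\item Your closing aside that both groups have rank three is false. In $G_{(1,0),(0,1)}$ you can eliminate $a=tbt^{-1}$, so the group is generated by $b$ and $t$ and has rank $2$. By contrast, $G_{(1,0),(1,0)}\cong\integers\times F_2$ has rank $3$, which gives yet another invariant separating the two groups.
\end{itemize}
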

\begin{proof}
First note that $a$ commutes with everything in $G_{(1,0),(1,0)} = \langle a,b,t: [a,b]=1, [a,t]=1\rangle$, so $G_{(1,0),(1,0)}$ has nontrivial center. We show that $G_{(1,0),(0,1)}$ has trivial center. 

Because $G_{(1,0),(0,1)}$ is an HNN extension of $BS(1,1)=\langle a,b :[a,b]=1\rangle$, we can write any $h\in G_{(1,0),(0,1)}$ as $$h=g_1 t^{\epsilon_1} g_2 t^{\epsilon_2}  \cdots g_n t^{\epsilon_n} g_{n+1}$$ where $\epsilon_i\in \{\pm 1\}$ and $g_i\in \langle a,b\rangle$. Britton's lemma says that if a word $h=1$ in $G_{(1,0),(0,1)}$, then either $n=0$ and $g_1=1$, or $n>0$ and for some $1\leq i \leq n-1$, one of the following holds:
\begin{itemize}
    \item $\epsilon_i = 1$ $\epsilon_{i+1}=-1$ and $g_i\in \langle b \rangle$
    \item $\epsilon_i=-1$, $\epsilon_{i+1}=1$ and $g_i \in \langle a \rangle$.
\end{itemize}

In fact, we can write $h$ in a normal form which is more restrictive than the previous form as $h=g_1 t^{\epsilon_1} g_2 t^{\epsilon_2} \cdots g_n t^{\epsilon_n} g_{n+1}$ where 
\begin{itemize}
    \item if $\epsilon_i =1$ then $g_i \in \{b^j:j\in \integers\}$ (since this is a transversal of $\langle a \rangle$)
    \item if $\epsilon_i=-1$ then $g_i \in \{a^j:j\in \integers\}$ (since this is a transversal of $\langle b \rangle$)
    \item If $\epsilon_{i-1}\ne \epsilon_i$ then $a_i \ne 1$
\end{itemize} and $g_{n+1}$ is arbitrary. Every element in the HNN extension is represented by a unique reduced word in this normal form. 

Suppose that $h\in G_{(1,0),(0,1)}$ commutes with $a$. Write $h=g_1 t^{\epsilon_1} g_2 t^{\epsilon_2}  \cdots g_n t^{\epsilon_n} g_{n+1}$ in the above normal form. Then $[h,a]=1$ means that
$$g_1 t^{\epsilon_1} g_2 t^{\epsilon_2}  \cdots g_n t^{\epsilon_n} g_{n+1} a g_{n+1}^{-1} t^{-\epsilon_n} g_n^{-1} \cdots t^{-\epsilon_2} g_2^{-1} t^{-\epsilon_1} g_1^{-1} a^{-1}=1$$
$$g_1 t^{\epsilon_1} g_2 t^{\epsilon_2}  \cdots g_n t^{\epsilon_n} a  t^{-\epsilon_n} g_n^{-1} \cdots t^{-\epsilon_2} g_2^{-1} t^{-\epsilon_1} g_1^{-1} a^{-1}=1$$
If $\epsilon_n=1$, then this word contains no subwords of the form $t b^{m} t^{-1}$ or $t^{-1} a^{m} t$ and Britton's lemma would say that the word cannot equal $1$. So actually we must have $\epsilon_{n}=-1$. Then the subword gives $t^{-1} a t = b$ so we have
$$g_1 t^{\epsilon_1} g_2 t^{\epsilon_2}  \cdots g_{n-1} t^{\epsilon_{n-1}} g_n b g_n^{-1} t^{-\epsilon_{n-1}} g_{n-1}^{-1} \cdots t^{-\epsilon_2} g_2^{-1} t^{-\epsilon_1} g_1^{-1} a^{-1}=1$$
$$g_1 t^{\epsilon_1} g_2 t^{\epsilon_2}  \cdots g_{n-1} t^{\epsilon_{n-1}} b  t^{-\epsilon_{n-1}} g_{n-1}^{-1} \cdots t^{-\epsilon_2} g_2^{-1} t^{-\epsilon_1} g_1^{-1} a^{-1}=1$$ and here by the same reasoning, we must have $\epsilon_{n-1}=1$, so that cancellation can occur. Continuing in this way, we see that the powers have to alternate sign for us to keep cancelling the word. If $n$ is odd, the last cancellation we do produces a power of $b$ and we get $g_1 b g_1^{-1} a^{-1}=1$ and $g_1\in \langle a,b\rangle$ will commute with $b$, giving $b a^{-1}=1$ which is a contradiction. Meanwhile if $n$ is even, the last cancellation is $t b t^{-1}=a$ and we get $g_1 a g_1^{-1} a^{-1}$ which is fine. So if $[h,a]=1$, it must be of the form $h=g_1 t g_2 t^{-1} \cdots t^{-1} g_{2m+1}$, as in it has an even number of appearances of $t$ (in either sign) and their powers alternate, starting with $1$ and ending with $-1$.

By a similar argument, we get that if $[h,b]=1$, it must be of the form $h=g_1 t^{-1} g_2 t \cdots t g_{2m+1}$, i.e. it has an even number of appearances of $t$ and their powers alternate, starting with $-1$ and ending with $1$. But reduced words are unique and $h$ has both of these reduced word forms. Comparing them, we get that $h$ must have zero appearances of $t$, since the two forms disagree on what their powers should be. So  $h=g_1 \in \langle a,b\rangle$ (it is the rightmost letter, which is allowed to be anything in $\integers^2 = \langle a,b:[a,b]=1\rangle$). Write $h=a^k b^\ell$.

Now suppose that $[t, a^k b^\ell]=1$, i.e. 
$$t a^k b^\ell t^{-1} a^{-k} b^{-\ell}=1.$$ We can replace $b^\ell t^{-1}$ with $t^{-1}a^\ell$ giving
$$t a^k t^{-1} a^\ell  a^{-k} b^{-\ell}=1$$
which is now a nontrivial reduced word (so it can't equal 1) unless $k=0$. But if $k=0$, we have $a^\ell b^{-\ell} =1$ which is only possible $\ell = 0$. But then $k=0$ and $\ell=0$ means that $a^kb^\ell = 1$. So actually the only element of $\langle a,b:[a,b]=1\rangle$ that commutes with $t$ is $1$. Then since the center $Z(G_{(1,0),(0,1)})$ is the intersection of all the centralizers of elements (taken in $G_{(1,0),(0,1)}$), we have $$Z(G_{(1,0),(0,1)}) \subs C_{G_{(1,0),(0,1)}}(a) \cap C_{G_{(1,0),(0,1)}}(b) \cap C_{G_{(1,0),(0,1)}}(t) = 1$$
since we just saw that if $h\in G_{(1,0),(0,1)}$ commutes with both $a$ and $b$, it will only commute with $t$ if it's trivial. Thus $G_{(1,0),(0,1)}$ has trivial center. Then it cannot be isomorphic to $G_{(1,0),(1,0)}$. 
\end{proof}

\begin{lemma}\label{G_10,01 JSJ}
    The following graph of groups decomposition of $G_{(1,0),(0,1)}$ is a JSJ decomposition. 
\begin{center}
    \begin{tikzpicture}
    \node at (0,0) (5)[left]{};
    \draw[fill=black] (0,0) circle (1.5pt);
    \node at (-1.5,0) (out) {$\langle a,b:[a,b]=1\rangle$};
\path[every node/.style={font=\sffamily\small}]
(5)   edge[in=45,out=-45, loop, distance=2.5cm] node[right]  {} (5);
\path[every node/.style={font=\sffamily\small}];
\node at (1,1) (out) {$a$};
\node at (1,-1) (in) {$b$};
\end{tikzpicture}
\end{center}
\end{lemma}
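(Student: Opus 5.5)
We verify the three conditions of Definition \ref{JSJCashen} for the one vertex one loop decomposition of $G_{(1,0),(0,1)}$. First the standing hypotheses. The group is finitely presented by definition. It is one-ended because it is the fundamental group of a graph of groups with one-ended vertex group $\integers^2$ and infinite cyclic edge groups: an HNN extension of a one-ended group over an infinite subgroup is one-ended, since any splitting over a finite group would have the vertex group $\langle a,b\rangle$ and the edge group elliptic, which forces a trivial splitting. It is not commensurable to a surface group. One way to see this is that it contains $\integers^2$ subgroups, so a surface group commensurable to it would have to be virtually $\integers^2$; but $G_{(1,0),(0,1)}$ contains a free subgroup (for instance $\langle t, btb^{-1}\rangle$, by Britton's lemma), so it is not virtually abelian.

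Conditions (2) and (3) are then quick. There are no two-ended vertex groups, so (2) is vacuous. There are no hanging vertices, because $\integers^2$ is not quasi-isometric to $\widetilde{\Sigma}$ (the latter is hyperbolic and non-elementary), so (3) is vacuous. For (1), the vertex group $\langle a,b\rangle\cong\integers^2$ is neither two-ended nor hanging, so it remains to show it is \emph{rigid}. This means it does not split over a two-ended subgroup relative to its incident edge groups.

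The incident edge groups, up to conjugacy in $G_v$, are $\langle a\rangle$ and $\langle b\rangle$: one from the outgoing map $a$ and one from the incoming map $t^{-1}at=b$. These two subgroups together generate a finite index subgroup of $\integers^2$. This is exactly Cashen's crossing condition. Suppose $\integers^2$ acts on a tree $T$ with two-ended (hence cyclic) edge stabilizers, and both $\langle a\rangle$ and $\langle b\rangle$ fix points. Since $a$ and $b$ commute, their fixed point sets are invariant under each other. If $\mathrm{Fix}(a)$ and $\mathrm{Fix}(b)$ intersect, then $\langle a,b\rangle$ fixes a point, so the splitting is trivial. If they are disjoint, then $b$ preserves $\mathrm{Fix}(a)$ and also fixes a point, so $b$ fixes the projection of $\mathrm{Fix}(b)$ onto $\mathrm{Fix}(a)$. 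This contradicts disjointness. More simply, two commuting elliptic elements of a group acting on a tree have a common fixed point, and the finitely generated abelian group $\langle a,b\rangle$ generated by elliptics that pairwise have common fixed points fixes a point by Serre's lemma. So every such splitting is trivial and the vertex is rigid.

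The main obstacle is the precise meaning of ``splits relative to incident edge groups''. If one allows the splitting to use only some commensurable copy of each incident group, the argument adapts: each incident group has a finite index subgroup that is elliptic, and powers $a^p$, $b^q$ still generate a finite index subgroup of $\integers^2$, which is then elliptic, so $\integers^2$ itself is elliptic (Serre). Assuming conditions (1)–(3) hold, Definition \ref{JSJCashen} gives that the displayed decomposition is a JSJ decomposition.
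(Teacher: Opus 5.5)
Your proposal is correct. It follows the same overall plan as the paper, namely verifying the three conditions of Definition \ref{JSJCashen}, but it proves the key step, rigidity of the $\integers^2$ vertex, by a different argument.

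The paper reduces a hypothetical relative splitting of $G_v$ to a one-edge splitting. It rules out the amalgam case because an amalgam with $C$ proper in both factors is non-abelian. It handles the HNN case with a Britton's-lemma computation in $G_{(1,0),(0,1)}$ of which conjugates of $a$ and $b$ can land in $G_v$, and finishes by comparing with $G_{(1,0),(1,0)}$.

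You instead observe that conjugation in $G_v=\integers^2$ is trivial. So a splitting relative to the incident edge groups is just an action of $\integers^2$ on a tree in which $a$ and $b$ are elliptic. Since $a$ and $b$ commute and generate $G_v$, they have a common fixed point, by your nearest-point projection argument or by Serre's lemma, so the splitting is trivial.

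Your route is shorter and needs no reduction to one-edge splittings. It uses no information about the ambient group and applies verbatim to any $\integers^2$ vertex whose incident edge groups span a finite-index subgroup. The paper's route is more hands-on but longer, and less transparent in the HNN case. Your treatment of one-endedness and of non-commensurability with a surface group is also more complete than the paper's: the paper asserts one-endedness without proof and essentially only rules out isomorphism with a surface group.

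One small correction. $\langle t, btb^{-1}\rangle$ is not free \emph{directly} by Britton's lemma, because $t\,(btb^{-1})^{-1}=tbt^{-1}b^{-1}=ab^{-1}$ contains the pinch $tbt^{-1}=a$. The subgroup is nonetheless free of rank two. It equals $\langle t, ab^{-1}\rangle$, and no nonzero power of $ab^{-1}$ lies in $\langle a\rangle$ or $\langle b\rangle$, so Britton's lemma does apply to that generating set. Your conclusion that $G_{(1,0),(0,1)}$ is not virtually abelian, and hence not commensurable to a surface group, therefore stands.
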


\begin{proof}
We check the conditions in Definition \ref{JSJCashen}. $G_{(1,0),(0,1)}$ is certainly one-ended group and we argue that it is not commensurable to a surface group. A finite index subgroup of a surface group is another surface group. Because $G_{(1,0),(0,1)}$ is torsion-free (HNN extensions preserve torsion free-ness), it cannot be the fundamental group of a non-orientable surface. It also cannot be isomorphic to $\pi_1(S_g)$ for $g\geq 2$, where $S_g$ is the closed orientable surface of genus $g$, because $G_{(1,0),(0,1)}$ is not hyperbolic, and it cannot be $\pi_1(S_1) = \integers^2$ because it's not abelian. 

The decomposition has one vertex with vertex group $G_v = \integers^2$, which is not two-ended. It is not hanging, because $\integers^2$ is not hyperbolic and so cannot be quasi-isometric to the universal cover of the hyperbolic pair of pants. We show that the $\integers^2$ vertex group is rigid. This will be all we need, because the last two conditions in Definition \ref{JSJCashen} are vacuously true: there are no valence one vertices in the graph of groups and there are no hanging vertices in the graph of groups, so no hanging vertices the Bass-Serre tree either. 

Suppose that the $G_v = \integers^2$ vertex group was not rigid. Then it splits over a two-ended subgroup  in such a way that the incident edge groups $\langle a\rangle$ and $\langle b \rangle$ are conjugate into one of the vertex groups in the supposed splitting of $G_v$. A priori, the splitting of $G_v$ could be into any graph of groups $\Gamma$, but we can reduce to the case where $\Gamma$ has a single edge, either a loop (if the vertex groups that $\langle a \rangle$ and $\langle b \rangle$ conjugate into are part of a cycle) or a straight edge. So either $\integers^2 = G_v = H *_C K$ or $H *_C$ where $C$ is a two-ended group. 

\begin{center}
\begin{tikzpicture}
     \node at (0,0) (1){};
       \draw[fill=black] (0,0) circle (1.5pt);
       \node at (2,0)(2){};
    \draw[fill=black] (2,0) circle (1.5pt);
        \draw[-] (1) to[out=0,in=180,distance=2.5cm] (2);
        \node at (0,0.5) (out) {$H$};
        \node at (1,0.25) (out) {$C$};
        \node at (2,0.5) (out) {$K$};
        \node at (-0.5, -1) (a) {$\langle a \rangle$};
        \draw [{Hooks[right]}->] (a) -- (1);
        \node at (2.5, -1) (b) {$\langle b \rangle$};
        \draw [{Hooks[left]}->] (b) -- (2);

     \node at (4,0) (3){};
       \draw[fill=black] (4,0) circle (1.5pt);
       \node at (6,0)(4){};
    \draw[fill=black] (6,0) circle (1.5pt);
        \draw[-] (3) to[out=0,in=180,distance=2.5cm] (4);
          \node at (4,0.5) (out) {$H$};
        \node at (5,0.25) (out) {$C$};
        \node at (6,0.5) (out) {$K$};  
                \node at (7, 0.75) (A) {$\langle a \rangle$};
        \draw [{Hooks[left]}->] (A) -- (4);
        \node at (7, -0.75) (B) {$\langle b \rangle$};
        \draw [{Hooks[left]}->] (B) -- (4);
            
     \node at (2,-3) (5){};
       \draw[fill=black] (2,-3) circle (1.5pt);
       \draw[-] (5) to[out=135,in=225,distance=2.5cm] (5);
                       \node at (3, -2.25) (a') {$\langle a \rangle$};
        \draw [{Hooks[left]}->] (a') -- (5);
        \node at (3, -3.75) (b') {$\langle b \rangle$};
        \draw [{Hooks[left]}->] (b') -- (5);
                  \node at (2,-2.5) (out) {$H$};
        \node at (0.25,-3) (out) {$C$};
\end{tikzpicture}
\end{center}

In the first two cases where $G_v = H *_C K$, if $C$ has index 2 or more in both $H$ and $K$, then there will be elements in the amalgamated product that do not commute, contradicting how $\integers^2$ is abelian. Then $C$ must be index $1$ in at least one of $H$ or $K$, so it is the trivial splitting which can be collapsed to the other vertex group. 

Note that because $C$ is a two-ended subgroup of $G_v = \integers^2$, it is torsion free and finitely generated (it is a subgroup of a finitely generated abelian group). Then $C$ is a finitely generated two-ended group so it must be virtually cyclic, and in fact cyclic because it is torsion free.  We have that $gag^{-1}, hbh^{-1} \in C$ for some $g, h \in G_{(1,0),(0,1)}$. %Let $C = \langle a^i b^j \rangle$.

Write $g$ in reduced form as $g = g_1 t^{\epsilon_1} \cdots g_n t^{\epsilon_n} g_{n+1}$. For $g a g^{-1}$ to equal an element $a^k b^\ell \in C$ means that 
$$g_1 t^{\epsilon_1} \cdots g_n t^{\epsilon_n} g_{n+1} a g_{n+1}^{-1} t^{-\epsilon_n} g_n^{-1} \cdots t^{-\epsilon_1} g_1^{-1} a^{-k} b^{-\ell} = 1 $$
$$g_1 t^{\epsilon_1} \cdots g_n t^{\epsilon_n} a  t^{-\epsilon_n} g_n^{-1} \cdots t^{-\epsilon_1} g_1^{-1} a^{-k} b^{-\ell} = 1 $$
If $\epsilon_n=1$, then this word contains no subwords of the form $tb^m t^{-1}$ or $t^{-1} a^m t$ and Britton's lemma would say that the word cannot equal $1$. So actually we must have $\epsilon_n = -1$. Then the subword gives $t^{-1} a t = b$ so we have
$$g_1 t^{\epsilon_1} \cdots g_{n-1} t^{\epsilon_{n-1}} g_n b g_n^{-1}  t^{-\epsilon_{n-1}} g_{n-1}^{-1}\cdots t^{-\epsilon_1} g_1^{-1} a^{-k} b^{-\ell} = 1 $$
$$g_1 t^{\epsilon_1} \cdots g_{n-1} t^{\epsilon_{n-1}} b  t^{-\epsilon_{n-1}} g_{n-1}^{-1}\cdots t^{-\epsilon_1} g_1^{-1} a^{-k} b^{-\ell} = 1 $$
and here by the same reasoning we must have $\epsilon_{n-1}=1$, so that cancellation can occur. Continuing in this way, we see that the powers have to alternate sign for us to keep cancelling the word. If $n$ is even, the cancellation gives $a a^{-k} a^\ell = 1$, so $k=1, \ell=0$ and we have $a=a^k b^\ell = gag^{-1}$. If $n$ is odd, the cancellation gives $b a^{-k} b^{-\ell} = 1$ so $k=0, \ell=1$ and $gag^{-1}=b$. We see that conjugating $a$ by $g\in G_{(1,0), (0,1)}$ can only ever produce $a$ or $b$. Similarly, conjugating $b$ by $h\in G_{(1,0),(0,1)}$ can only ever produce $a$ or $b$. So the vertex group $H$ contains at least one of $a$ and $b$. Suppose that $a\in H$.

If $H$ contains any element with a nonzero power of $b$, then because the edge group is cyclic, the HNN letter will not commute with both $a$ and that element with a nonzero power of $b$, and so we do not get an abelian group like $\integers^2$. 
\begin{center}
\begin{tikzpicture}
    \node at (2,-3) (5){};
       \draw[fill=black] (2,-3) circle (1.5pt);
       \draw[-] (5) to[out=135,in=225,distance=2.5cm] (5);
                  \node at (3,-3) (out) {$H \sups \langle a \rangle$};
        \node at (-0.25,-3) (out) {$\langle a^i b^j \rangle$};
\end{tikzpicture}
\end{center}

So suppose that $H$ does not contain any element with a nonzero power of $b$, so that $H = \langle a \rangle$. Then the edge group, being a subgroup of $H$, is generated by a power of $a$. In fact, it must be $\langle a \rangle$ mapped by the identity on both ends, otherwise the HNN letter in the supposed $G_v$ will not commute with $a$. The incident edge groups $\langle a \rangle$ and $\langle b \rangle$ must both conjugate to $a$ in order to have image inside of $H$. But then the resulting HNN extension gives $G_{(1,0), (1,0)}$ which is not isomorphic to $G_{(1,0),(0,1)}$. Therefore, there is no possible nontrivial splitting of the vertex group $G_v$ over two-ended edge groups and  $G_v$ is rigid. 
\end{proof}

\begin{lemma}
    $G_{(1,0),(0,1)}$ is not quasi-isometric to $G_{(m,0),(n,0)}$ for any $m,n$. 
\end{lemma}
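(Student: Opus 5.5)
We will compare JSJ trees of cylinders, as announced in the outline, using Theorem 2.8 of \cite{ShepherdWoodhouse}. Since quasi-isometry is implied by commensurability, and by the preceding lemmas every $G_{(m,0),(n,0)}$ is commensurable to some $G_{(m,0),(n,0)}$ with $\gcd(m,n)=1$ (the case $m=n$ reduces to $G_{(1,0),(1,0)}$), we may assume $\gcd(m,n)=1$ and $1\le m\le n$. The first step is to check that both groups are finitely presented and one-ended. For $G_{(1,0),(0,1)}$ this was used in Lemma \ref{G_10,01 JSJ}. For $G_{(m,0),(n,0)}$ it follows because it is an HNN extension of $\integers^2$ over an infinite cyclic group, hence a one-ended GBS group.

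The second step is to exhibit a JSJ decomposition of $G_{(m,0),(n,0)}$. We will take the GBS labelled graph with one vertex $\langle a\rangle$ and two loops, labelled $(1,1)$ and $(m,n)$, and verify Definition \ref{JSJCashen}. All vertex groups are $\integers$, hence two-ended. There are no valence one vertices and no hanging vertices, so conditions 2 and 3 hold vacuously. We also need that $G_{(m,0),(n,0)}$ is not commensurable to a surface group. It is not hyperbolic, since it contains $\integers^2$, and it is not virtually $\integers^2$, since it is non-elementary by Lemma \ref{non-elementary} and has rank $3$.

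In the Bass--Serre tree $T$ of this decomposition every edge stabilizer is a finite-index subgroup of a conjugate of $\langle a\rangle$. All elliptic subgroups are commensurable, so every pair of edge stabilizers is commensurable and the whole of $T$ is a single cylinder. Consequently no vertex of $T$ lies in two distinct cylinders, so $V_0(T_c)=\emptyset$ and the tree of cylinders $T_c$ is a single point of type $V_1$.

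For $G_{(1,0),(0,1)}$, Lemma \ref{G_10,01 JSJ} gives the JSJ decomposition with one rigid $\integers^2$ vertex, and we compute its tree of cylinders. Edge stabilizers are conjugates of $\langle a\rangle$ (equivalently of $\langle b\rangle$, since $t^{-1}at=b$). A vertex $g\integers^2$ has incident edge groups $g\langle a\rangle g^{-1}$ and $g\langle b\rangle g^{-1}$, which are not commensurable because $a$ and $b$ are independent in $\integers^2$. Hence each vertex of $T$ lies in at least two cylinders, so $V_0(T_c)\neq\emptyset$ and $T_c$ is an infinite tree; it is not a single $V_1$ vertex. Then Theorem 2.8 of \cite{ShepherdWoodhouse} says a quasi-isometry would induce an isomorphism $T_c\to T_c'$ preserving the $V_0/V_1$ types. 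This is impossible, since one tree has $V_0$ vertices and the other has none.

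The main obstacle is justifying that the GBS decomposition really is a JSJ decomposition in the sense of Definition \ref{JSJCashen}. In particular, we must make sure the characterization applies to a decomposition whose vertex groups are all two-ended, and that condition 2 is not violated by some vertex being valence one. If this is delicate, a fallback is to argue directly that $G_{(m,0),(n,0)}$ has no rigid vertex, for instance because it is commensurable with $F_k\times\integers$ or quasi-isometric to $BS(2,3)$. By contrast, $G_{(1,0),(0,1)}$ has the rigid $\integers^2$ vertex, and condition 2 of Theorem 2.8 of \cite{ShepherdWoodhouse} would send it to a $V_0$ vertex, which does not exist in $T_c'$.
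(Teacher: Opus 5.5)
Your proposal is correct and follows the paper's route. Both arguments verify via Definition \ref{JSJCashen} that the one-vertex, two-loop GBS decomposition of $G_{(m,0),(n,0)}$ is a JSJ decomposition whose Bass--Serre tree is a single cylinder, so its tree of cylinders is a point. Both then apply Theorem 2.8 of \cite{ShepherdWoodhouse} against the rigid $\integers^2$ decomposition of $G_{(1,0),(0,1)}$.

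Your way of seeing that the latter tree of cylinders is not a point is a slightly cleaner substitute for the paper's count of infinitely many cylinders. Each vertex $g\integers^2$ lies in the two distinct cylinders of $g\langle a\rangle g^{-1}$ and $g\langle b\rangle g^{-1}$, so $V_0(T_c)\neq\emptyset$. Your fallback is unnecessary, since the single vertex of the GBS graph has valence four and condition 2 is vacuous.
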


\begin{proof}
We argue that the JSJ tree of cylinders for $G_{(m,0),(n,0)}$ is just one point, whereas that of $G_{(1,0),(0,1)}$ is not. Then because the two groups have non-isomorphic trees of cylinders, they cannot be quasi-isometric. 

The $BS(1,1)\vee BS(m,n)$ graph of groups decomposition for $G_{(m,0),(n,0}$ is a JSJ decomposition. We can check this using Definition \ref{JSJCashen}, which is applicable because $G_{(m,0),(n,0)}$ is one-ended and not commensurable to a surface group (by the same reasoning as for $G_{(1,0),(0,1)}$ previously). The only vertex group is $\integers$ which is 2-ended. There are no valence 1 vertices and by Lemma 2.11 in \cite{ForDeform}, GBS groups have no hanging vertices. 

Because this is a GBS graph of groups for $G_{(m,0),(n,0)}$, edge stabilizers in the corresponding Bass-Serre tree have finite index in neighboring vertex stabilizers and the Bass-Serre tree is locally finite. Then all vertex and edge stabilizers are commensurable (in the sense of their intersection being finite index in both groups). Thus there is just one cylinder (all the edges are in one cylinder) and there are no vertices in two or more cylinders, so the JSJ tree of cylinders is just one point. 

Meanwhile, we work with the JSJ decomposition of $G_{(1,0),(0,1)}$ from Lemma \ref{G_10,01 JSJ} and its corresponding Bass-Serre tree. Here, cylinders are all of the outgoing edges starting at some vertex $h$ and going to either $ha^i t$ (if the net sum of powers of $t$ is odd) or $hb^i t$ (if the net sum is even). So each cylinder consists of the infinitely many edges that share the same origin vertex. The stabilizer of the cylinder with origin vertex $h$ is $\langle h a h^{-1} \rangle$. These are incommensurable for distinct $h$ in the Bass-Serre tree. So there are infinitely many cylinders in the Bass-Serre tree for $G_{(1,0),(0,1)}$ which already give rise to infinitely many vertices in the JSJ tree of cylinders. So it cannot be isomorphic to the JSJ tree of cylinders for $G_{(m,0),(n,0)}$. 

\end{proof}

\begin{center}
\begin{tikzpicture}
    \draw (0,-1) -- (0,5) -- (9,5) -- (9,-1) -- (0,-1);
    \draw (4.5,5) -- (4.5,-1);
    \draw (0,4.2) -- (4.5,4.2);
    \draw (0,3.4) -- (4.5,3.4);
    \draw (0,2.6) -- (4.5,2.6);
    \draw (0,1.8) -- (4.5,1.8);
     \node at (2.2,5.5) (LD) {Zero intersection number};
    \node at (6.7,5.5) (LI) {Nonzero intersection number};
    \node at (2.2,4.6) () {$G_{(1,0),(1,0)} \sim G_{(m,0),(m,0)}$};
    \node at (2.2,3.8) () {$G_{(1,0),(2,0)} \sim G_{(2,0),(4,0)}$ etc.};    
    \node at (2.2,3) () {$G_{(1,0),(4,0)} \sim G_{(2,0),(8,0)}$ etc.}; 
    \node at (2.2,2.2) () {$G_{(2,0),(3,0)} \sim G_{(4,0),(6,0)}$ etc.}; 
    \node at (2.2,1) () {$\vdots$}; 
    \node at (6.7,3.8) () {All are commensurable}; 
    \node at (6.7,3.2) () {to $G_{(1,0),(0,1)}$}; 
    \node at (6.7,1.8) () {$G_{(2,1),(0,3)},$}; 
    \node at (6.7,1) () {$G_{(0,1),(-4,6)}$ etc.}; 
     \draw [blue,thick]  (-0.05,-1) -- (-0.05,4.2) -- (4.55,4.2) -- (4.55,-1.05) -- (-0.05,-1.05);
     \draw [blue,thick] (-0.05,5.05) -- (4.55,5.05) -- (4.55,4.2) -- (-0.05,4.2) -- (-0.05,5.05);
     \draw [blue, thick] (4.55,5.05) -- (9.05,5.05) -- (9.05,-1.05) -- (4.55, -1.05) -- (4.55,5.05);
     \node at (4.5,-2) () {Black boxes are commensurability classes};
     \node at (4.5,-2.6) () {\textcolor{blue}{Blue} boxes are quasi-isometry classes};
     \node at (-1,2.2) () {\textcolor{blue}{$BS(2,3) \sim$}};
\end{tikzpicture}
\end{center}

\printbibliography
\end{document}